\documentclass{amsart}
\pdfoutput=1

\setlength{\textwidth}{170mm} \setlength{\textheight}{8.0in} \setlength{\oddsidemargin}{-0.225cm} \setlength{\evensidemargin}{-0.225cm} \setlength{\footskip}{30pt} \addtolength{\textheight}{.695in} \addtolength{\voffset}{-.55in} 
\setlength{\parindent}{15pt} 

\usepackage{amssymb}
\usepackage{amsthm}
\usepackage{amsfonts}
\usepackage{amsmath}
\usepackage{pmboxdraw}
\usepackage{verbatim} 
\usepackage{graphicx}
\usepackage{color}
\usepackage[colorlinks=true, citecolor=blue, filecolor=black, linkcolor=black, urlcolor=black]{hyperref}
\usepackage{cite}
\usepackage[normalem]{ulem}
\usepackage{subcaption}
\usepackage{todonotes}
\usepackage{bbm}
\usepackage{kantlipsum}
\allowdisplaybreaks


\newcommand{\RN}[1]{%
	\textup{\uppercase\expandafter{\romannumeral#1}}%
}

\def\pa{\partial}

\def\R{\mathbb{R}}

\newcommand{\erf}{\operatorname{erf}}

\newcommand{\sgn}{\operatorname{sgn}}
\newcommand{\Tr}{\operatorname{Tr}}

\theoremstyle{plain}
\newtheorem*{thm*}{Theorem}
\newtheorem{thm}{Theorem}[section]
\newtheorem{lem}[thm]{Lemma}
\newtheorem{cor}[thm]{Corollary}
\newtheorem{prop}[thm]{Proposition}
\newtheorem*{prop*}{Proposition}
\newtheorem*{lem*}{Lemma}

\theoremstyle{definition}
\newtheorem*{eg*}{Example}
\newtheorem*{egs*}{Examples}

\newtheorem{ex}[thm]{Example}
\newtheorem{rem}[thm]{Remark}
\newtheorem{rem*}[thm]{Remark}

\theoremstyle{remark}
\newtheorem*{rmk*}{Remark}
\newtheorem*{rmks*}{Remarks}


\numberwithin{equation}{section}

\begin{document}
\title[Harer-Zagier type recursion formula for the elliptic G{\SMALL in}OE]{Harer-Zagier type recursion formula for the elliptic G{\SMALL in}OE}

\author{Sung-Soo Byun}
\address{Department of Mathematical Sciences and Research Institute of Mathematics, Seoul National University, Seoul 151-747, Republic of Korea}
\email{sungsoobyun@snu.ac.kr}

\date{\today}

\thanks{Sung-Soo Byun was partially supported by the POSCO TJ Park Foundation (POSCO Science Fellowship), by the New Faculty Startup Fund at Seoul National University and by the National Research Foundation of Korea funded by the Korea government (NRF-2016K2A9A2A13003815, RS-2023-00301976).
}

\begin{abstract}
We consider the real eigenvalues of the elliptic Ginibre matrix indexed by the non-Hermiticity parameter $\tau \in [0,1]$, and present a Harer-Zagier type recursion formula for the even moments in the form of an $11$-term recurrence relation. 
For the symmetric GOE case ($\tau=1$), it reduces to a known 5-term recurrence relation. On the other hand, for the asymmetric cases when $\tau < 1$, the recursion formula is new, even in the special case of the well-studied Ginibre ensemble ($\tau=0$), where it reduces to a 3-term recurrence. For the proof, we derive a seventh-order linear differential equation for the moment generating function.
\end{abstract}

\maketitle

\section{Introduction and main results}

Random Matrix Theory (RMT) enjoys an intimate connection with various branches of mathematics and physics \cite{ABD11}. One prominent illustration of this relationship is the Harer-Zagier formula \cite{HZ86}, which stands as a well-known example demonstrating the combinatorial and topological significance inherent in RMT statistics.
While the Harer-Zagier formula originates in the study of the moduli space of curves, it also gives rise to a fundamental formula in the study of spectral moments of classical random matrix ensembles (cf. \cite{Ch11}).

To be more concrete, let us consider the Gaussian Unitary Ensemble (GUE) $X^{\rm GUE}$ picked randomly with respect to the measure proportional to $ e^{ -\frac{1}{2} \Tr X^2 } \,dX $ on the space $\mathcal{H}_N$ of $N \times N$ Hermitian matrices. 
Here, $d X$ is the Lebesgue measure on $\mathcal{H}_N \cong \mathbb{R}^{N^2}$.
One of the most basic observables of the GUE is its $p$-th spectral moment 
\begin{equation}
M_p^{ \rm GUE }:= \mathbb{E} \Big[\Tr (X^{\rm GUE})^p \Big] , \qquad p \in \mathbb{N}.
\end{equation}
Note here that, due to symmetry, all odd moments vanish. On the other hand, it possesses non-trivial even moments. 
For instance, the first few even moments are given by 
$$
M_0^{\rm GUE} = N, \qquad M_2^{\rm GUE} = N^2, \qquad M_4^{\rm GUE}=2N^3+N, \qquad  M_6^{\rm GUE}=5N^4+10N^2.
$$ 
In \cite{HZ86}, it was demonstrated using the Wick calculus that the spectral moments satisfy the $3$-term recursion formula
\begin{equation} \label{GUE recursion}
(p+1) M_{2p}^{ \rm GUE }= (4p-2) N \,M_{2p-2}^{\rm GUE}+(p-1)(2p-1)(2p-3) M_{2p-4}^{\rm GUE}. 
\end{equation}
This recursion formula yields that for any non-negative integer $p,$ 
\begin{equation} \label{GUE genus expansion}
 M_{2p}^{ \rm GUE } = \sum_{g=0}^{ \lfloor p/2 \rfloor } c(g;p) \, N^{p+1-2g}.
\end{equation}
Here, the coefficients $c(g; p)$ can be identified with the number of pairings of the edges of a $2p$-gon, dual to a map on a compact Riemann surface of genus $g$. 
For this reason, a formula of the type \eqref{GUE genus expansion} is commonly referred to as the genus expansion in RMT. 
Indeed, this type of formula provides one of the earliest examples of a topological expansion in the theory of matrix integrals \cite{BIPZ78}.
In particular, the leading coefficient $c(0;p)$ is given by the $p$-th Catalan number
\begin{equation} \label{Catalan}
c(0; p) = \frac{1}{p+1} \binom{2p}{p}.
\end{equation}
Since the Catalan number corresponds to the even moments of the Wigner's semicircle law
\begin{equation} \label{semi circle}
d\mu_{ \rm sc }(x):= \frac{ \sqrt{4-x^2} }{2\pi} \, \mathbbm{1}_{(-2,2)}(x)\,dx,
\end{equation}
this leading coefficient \eqref{Catalan} gives rise to the convergence of the GUE density towards \eqref{semi circle}. 
Furthermore, when combined with the loop equation formalism, the expansion \eqref{GUE genus expansion} can be utilized to derive the large-$N$ expansion of the densities, cf. \cite{WF14}.

The Harer-Zagier formula \eqref{GUE recursion} was revisited by Haagerup and Thorbjørnsen in \cite{HT03}. 
They re-derived this recursion formula using a closed-form expression for the moment-generating function, which is given in terms of a confluent hypergeometric function, cf. \eqref{u tau1}.
Furthermore, they obtained a similar formula for the Laguerre Unitary Ensemble (LUE). 
Additional discussions on the LUE spectral moments can be found in \cite{CDO21,Di03,CMSV16}, and similar work on the Jacobi Unitary Ensemble (JUE) is detailed in \cite{GGR21}.
Notably, for all these classical GUE, LUE, and JUE models, the spectral moments can be evaluated in closed form using hypergeometric polynomials \cite{CMOS19}. 
We also refer to \cite{FLSY23,CCO20,MPS20,Le05,BFO24} for discrete extensions of the above-mentioned results and to \cite{Fo21} for an extension of \eqref{GUE recursion} to the $d$-dimensional Fermi gas in an harmonic trap.
In the context of RMT, the Harer-Zagier type formulas can be employed to derive (small) deviation inequalities for extreme eigenvalues \cite{HT03,Le04,Le09}, which play an important role in the study of the associated universality problems, see e.g. \cite{FS10,EY23}. 
Furthermore, as previously mentioned, it can be used to study the finite-size corrections of the densities or the counting statistics \cite{PS16,Bo16,FT19,YZ23, DDMS19}.
For further mathematical implementation beyond RMT, we refer the reader to a recent work \cite{GGHZ22}, which delves into expressing the Harer-Zagier formula within the context of noncommutative geometry.
Furthermore, for applications of these formulas in the context of the time-delay matrix of quantum dots and $\tau$-function theory, see \cite{MS11,MS12,MS13,Cu15,CMSV16,LV11} and references therein.


Beyond the unitary invariant ensembles, it is natural to investigate the Harer-Zagier type formula for random matrices with orthogonal symmetry. 
The single-most fundamental model in this class is perhaps the Gaussian Orthogonal Ensemble (GOE) $X^{ \rm GOE}$ that follows the probability distribution proportional to $ e^{-\frac14 \Tr X^2} \,dX $, where in this case $d X$ is the Lebesgue measure on the space of symmetric matrices $\mathcal{S}_N \cong \mathbb{R}^{N(N+1)/2}$. 
Similar to the above, we write 
\begin{equation}
M_p^{ \rm GOE }:= \mathbb{E} \Big[\Tr (X^{\rm GOE})^p \Big] , \qquad p \in \mathbb{N}
\end{equation}
for the $p$-th spectral moment of the GOE.
For instance, we have the following explicit evaluations
\begin{align*}
M_0^{ \rm GOE }=N, \qquad M_2^{ \rm GOE }=N^2+N, 
\qquad 
 M_4^{ \rm GOE }=2N^3+5N^2+N, \qquad M_6^{ \rm GOE }=5N^4+22N^3+52N^2+41N. 
\end{align*}
It is trivial but noteworthy for the latter discussion that $M_0^{\rm GUE} = M_0^{\rm GOE} = N$ since they coincide with the number of eigenvalues.

As is widely recognized, the integrable structure of orthogonal ensembles is considerably more intricate when compared to their unitary counterparts (cf. \cite{AFNM00}). 
This complexity leads to a delay in the investigation of the Harer-Zagier type formula for the GOE.  
Notably, Ledoux demonstrated in \cite[Theorem 2]{Le09} that the GOE spectral moments satisfy the following $5$-term recurrence relation
\begin{align}  \label{HZ formula for GOE}
\begin{split}
(p+1) M_{2p}^{ \rm GOE } &= (4p-1)(2N-1) M_{2p-2}^{ \rm GOE } 
 +(2p-3)(10p^2-9p-8N^2+8N) M_{2p-4}^{ \rm GOE }
\\
&\quad -5(2p-3)(2p-4)(2p-5)(2N-1) M_{2p-6}^{ \rm GOE }
\\
&\quad -2(2p-3)(2p-4)(2p-5)(2p-6)(2p-7) M_{2p-8}^{ \rm GOE }.
\end{split}
\end{align}
We also refer to \cite{Mi19} for a combinatorial aspect of this formula.
In \cite{Le09}, the formula \eqref{HZ formula for GOE} follows from a linear differential equation for the moment generating function (MGF), see also 
\cite{RF21} and references therein for more recent work. 
The method of Ledoux relies on elementary yet nontrivial analysis employing Gaussian integration by parts and certain properties of the Hermite polynomials. This method can also be applied to re-derive the recursion \eqref{GUE recursion} for the GUE.
From the viewpoint of the integrable structure, the additional technical difficulty for the GOE, compared to the GUE, is that the $1$-point function \eqref{GOE density v2} of the GOE consists of two parts. 
One of these parts is indeed the Christoffel-Darboux kernel of the Hermite polynomials, which coincides with the GUE density.
For the GUE density part, one can make use of the classical Christoffel-Darboux formula.


While there has been extensive study on the spectral moments of Hermitian random matrix ensembles, their non-Hermitian counterparts remain relatively unexplored, and we aim to make contributions in this direction.
In non-Hermitian random matrix theory, the basic model typically considered is the Ginibre matrix, see \cite{BF22,BF23} for recent reviews. 
In particular, what distinguishes the real Ginibre matrix, referred to as GinOE, from its complex or quaternionic counterparts, GinUE and GinSE respectively, is the presence of purely real eigenvalues, see Figure~\ref{Fig_REGinibre}.
We shall focus on the statistics of real eigenvalues of real random matrices. 
Compared to Hermitian random matrices, the study of real eigenvalues of asymmetric random matrices has additional conceptual and technical challenges:
\begin{itemize}
    \item the number of real eigenvalues is random;
    \smallskip
    \item the classical Christoffel-Darboux formula does not apply. 
\end{itemize}
We refer to \cite{FS23,FIK20,LMS22,Si17,Si17a,AB22,KPTTZ16,BKLL23,BMS23,Fo23,Fo15a,FT21} and references therein for recent work on real eigenvalues of various asymmetric random matrices. 

In this work, we consider the elliptic GinOE, the real random matrices $X \equiv X_\tau$ that are distributed according to the probability measure proportional to
\begin{equation}
 \exp\Big(-\frac{ \Tr(X^T X-\tau X^2) }{2(1-\tau^2)} \Big)\, dX, \qquad \tau\in[0,1], 
\end{equation}
where $dX$ is the Lebesgue measure on the space $\mathbb{R}^{N^2}$ of $N \times N$ matrices with real elements, and $X^T$ is the transpose of $X$.  
Here, the parameter $\tau$ is known as the non-Hermiticity parameter. 
Alternatively, the elliptic GinOE can be defined as 
\begin{equation}\label{X GOE antiGOE}
X_\tau := \sqrt{\frac{1+\tau}{2}}\, S_+ + \sqrt{\frac{1-\tau}{2}} \, S_-, \qquad S_\pm := \frac{1}{\sqrt{2}} (G \pm G^T).
\end{equation}
Here $G$ is an $N\times N$ GinOE matrix, the random matrix with all independent Gaussian entries $\textup{N}[0,1/N]$. 
Notice that the symmetrisation $S_+$ of the GinOE coincides with the GOE.
The elliptic random matrix model stands out as a well-known model, seamlessly bridging fundamental concepts in non-Hermitian and Hermitian random matrix theories.
Namely, for $\tau=0$, it recovers the GinOE, whereas in the limit $\tau\uparrow 1$, it recovers the GOE. 
We refer the reader to \cite[Section 2.3]{BF22} and \cite[Sections 2.8 and 5.5]{BF23} for reviews as well as comprehensive references on the elliptic Ginibre ensembles. See also \cite{OYZ23,FG23,Fo23,HHJK23,BL23} for some very recent works on the elliptic Ginibre matrices.

\begin{figure}[t]
    \centering
    \includegraphics[width=0.8\textwidth]{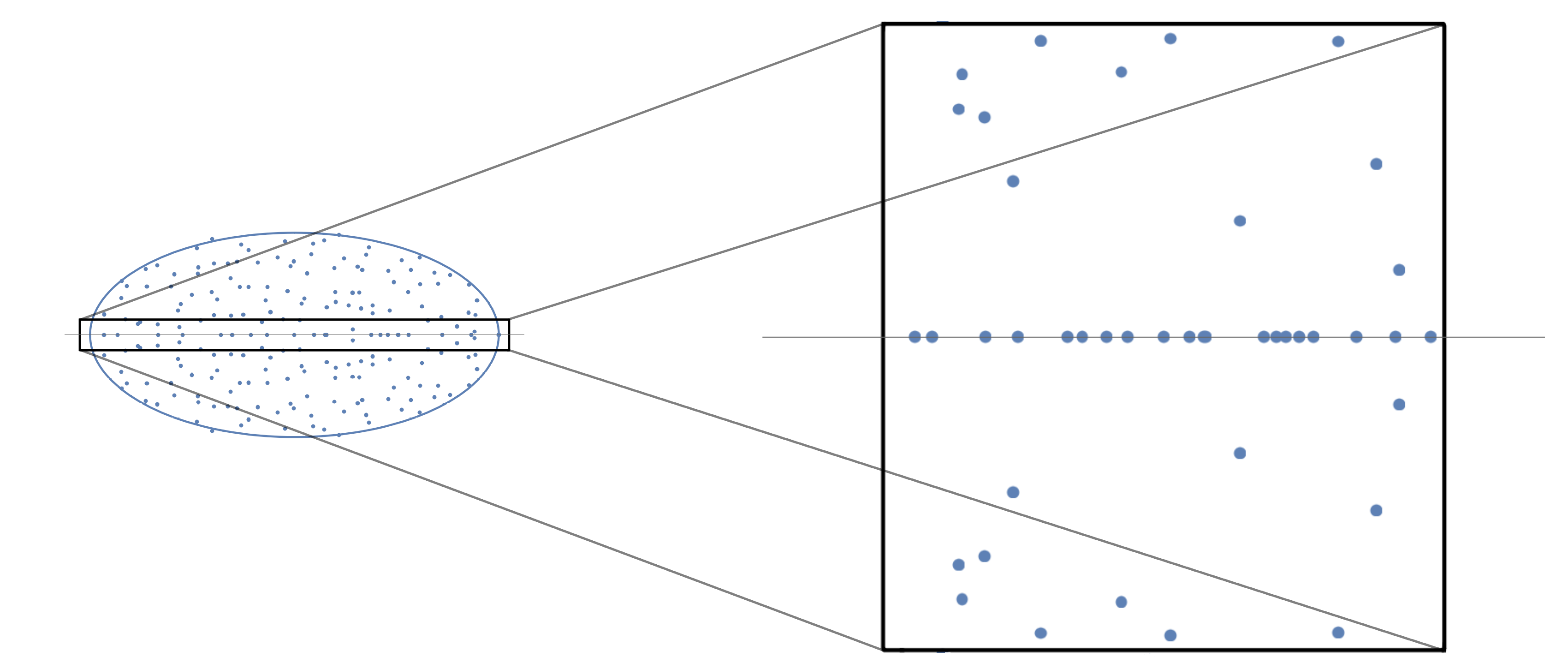}
    \caption{Eigenvalues of the elliptic GinOE, where $\tau=1/3$ and $N=200$.}
    \label{Fig_REGinibre}
\end{figure}

From now on, we shall focus on the case that the matrix dimension $N$ is even. 
The odd $N$ case can also be analysed but requires further non-trivial computations \cite{Si07,FM09}. 
Let us denote by $\{ \lambda_j \}_{j=1}^{\mathcal{N}_\tau}$ the real eigenvalues of the elliptic GinOE, where $\mathcal{N}_\tau$ is the number of real eigenvalues. 
Along the discussions above, we write 
\begin{equation} \label{Mp definition}
M_p \equiv M_{p,\tau} :=  \mathbb{E} \Big[ \sum_{j=1}^{ \mathcal{N}_\tau } \lambda_j^p \Big] 
\end{equation}
for the $p$-th spectral moment of real eigenvalues of the elliptic GinOE. 
In particular, the zero-th moment 
\begin{equation}
M_{0,\tau} = \mathbb{E} \mathcal{N}_\tau
\end{equation}
corresponds to the expected number of real eigenvalues. 
In contrast to the GUE or GOE cases, the evaluation of the spectral moments of the elliptic GinOE does not permit simple formulas and requires highly non-trivial analysis.
For instance, it was shown in \cite{FN08} that $M_{0,\tau} $ can be evaluated as
\begin{equation} \label{M0 expected number}
M_{0,\tau} = \Big(\frac{2}{\pi} \frac{1+\tau}{1-\tau} \Big)^{\frac12} \sum_{k=0}^{N/2-1} \frac{\Gamma(2k+\frac12)}{(2k)!} {}_2 F_1\Big( \frac12, \frac12 \, ; \, -2k+\frac12 \, ; \, -\frac{\tau}{1-\tau} \Big),
\end{equation}
where ${}_2 F_1$ is the hypergeometric function defined by 
\begin{equation} \label{2F1 Gauss series}
	{}_2 F_1(a,b;c;z):=\frac{\Gamma(c)}{\Gamma(a)\Gamma(b)} \sum_{s=0}^\infty \frac{\Gamma(a+s) \Gamma(b+s) }{ \Gamma(c+s) s! } z^s, \quad (|z|<1)
\end{equation}
and by the analytic continuation otherwise. 
The expression \eqref{M0 expected number} results from intricate computations as well as extensive simplifications, utilizing the skew-orthogonal polynomial formalism developed by Forrester and Nagao in \cite{FN07,FN08}.
For the reader's convenience, we provide a brief exposition of this formalism and the derivation of \eqref{M0 expected number} in Appendix~\ref{Appendix_integrable}.
For the extremal case when $\tau=0,1$, the formula \eqref{M0 expected number} reduces to 
\begin{equation}
M_{0,\tau} = \begin{cases}
   \displaystyle \sqrt{2} \sum_{k=0}^{N/2-1} \dfrac{(4k-1)!!}{ (4k)!! } &\textup{if }\tau=0,
    \smallskip 
    \\
    N &\textup{if }\tau=1. 
\end{cases}
\end{equation}
This formula for the GinOE case $\tau=0$ was first obtained in the work \cite{EKS94} of Edelman, Kostlan and Shub. 
On the other hand, for the GOE case $\tau=1$, this is trivial because GOE matrices are inherently symmetric. 
This can also be interpreted as the random variable $\mathcal{N}_\tau$ becoming deterministic in the Hermitian limit $\tau \uparrow 1$.
We mention that the large-$N$ asymptotic behaviours of $M_{0,\tau}$ have been extensively studied in recent works \cite{BKLL23,BMS23}.

\medskip 

We now present our main result. 
For this purpose, for $l=\{1,\dots,10\},$ let 
\begin{align}
\begin{split} \label{mathfrak A def}
\mathfrak{A}_l \equiv \mathfrak{A}_l(p;\tau)&:=  \frac{1}{( 1  + 6 \tau + 2 N (1-\tau^2) ) (4  + 17 \tau + 6 \tau^2 + 2 N(1- \tau^2) )^2} 
\\
&\quad \times \frac{ 1 }{ 
256(p-4) (p-3) (p-2) } \sum_{ k=0 }^{ \min\{ 10-l, 7 \} } (2p-k-2l+1)_{k+2l-3} \,\mathfrak{a}_{k,k+2l-3},
\end{split}
\end{align}  
where $(a)_n=a (a+1) \dots (a+n-1)$ is the Pochhammer symbol and $\mathfrak{a}_{l,k} \equiv \mathfrak{a}_{l,k}(\tau)$ is the coefficient of the $t^{k}$-term in the polynomial $A_l(t)$ (see \eqref{mathfrak a lk definition} below).
Here, $A_l(t)$'s are odd or even polynomials of degree $17-l$ that are given in \eqref{Ak definition Bj}. 

\begin{thm}[\textbf{Recursion formula of the elliptic GinOE}] \label{Thm_main}
Let $\tau \in [0,1].$
Then for any integer $p \ge 10$ and even integer $N \ge 2$, we have the 11-term recurrence relation
\begin{equation} \label{M recurrence main}
2(2p+1)(1-\tau)^6 \, M_{2p,\tau} = \sum_{l=1}^{10} \mathfrak{A}_l(p;\tau) \, M_{2p-2l,\tau},
\end{equation}
where $\mathfrak{A}_l$'s are given by \eqref{mathfrak A def}. 
\end{thm}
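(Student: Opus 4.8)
The approach is the \emph{moment generating function} (MGF) method, following Haagerup--Thorbj{\o}rnsen \cite{HT03} for the GUE and Ledoux \cite{Le09} for the GOE, now pushed through for the elliptic GinOE: I will pack the even moments $M_{2p,\tau}$ into a single function $U(t)$, show that $U$ is annihilated (up to a polynomial) by an explicit seventh-order linear differential operator with polynomial-in-$t$ coefficients depending on $N$ and $\tau$, and then obtain \eqref{M recurrence main} by comparing Taylor coefficients.

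The first step is to import, for even $N$, the closed form of the real eigenvalue density $\rho_{N,\tau}$ of the elliptic GinOE from the skew-orthogonal polynomial formalism of Forrester and Nagao \cite{FN07,FN08} (recalled in Appendix~\ref{Appendix_integrable}). As in the GOE case (cf.\ \eqref{GOE density v2}), $\rho_{N,\tau}$ splits into two parts: a Christoffel--Darboux-type bilinear expression in rescaled Hermite polynomials --- a hypergeometric polynomial in $x^2$ against the Gaussian weight $e^{-x^2/(2(1+\tau))}$ --- plus a correction term carrying a factor of the form $x^{N-1}e^{-x^2/(2(1+\tau))}\,\erfc(\,\cdot\,)$. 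Since $\rho_{N,\tau}$ is even, only the $M_{2p,\tau}=\int_{\mathbb{R}} x^{2p}\rho_{N,\tau}(x)\,dx$ are nonzero. The second step is to introduce the MGF $U(t)\equiv U_{N,\tau}(t)$ as a suitable integral transform of $\rho_{N,\tau}$, normalized so that its Taylor coefficients reproduce the $M_{2p,\tau}$ up to the explicit prefactors appearing in \eqref{M recurrence main} (this is what forces the factor $2(2p+1)(1-\tau)^6$ and clears all denominators), and to evaluate $U$ in closed form, generalizing \eqref{u tau1}: the finite Hermite sum collapses into a combination of confluent hypergeometric functions, while the correction term contributes a simpler piece.

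The third and decisive step is the derivation of the seventh-order ODE. The Gaussian weight is processed by integration by parts, with vanishing boundary terms at $\pm\infty$; the Hermite dependence is reduced using the three-term recurrence $xH_n=\tfrac12 H_{n+1}+nH_{n-1}$, the differentiation rule $H_n'=2nH_{n-1}$, and the Hermite equation $H_n''-2xH_n'+2nH_n=0$ (in a suitable normalization). Each of the two parts of $\rho_{N,\tau}$ generates a small finite system of auxiliary transforms --- indexed by a few consecutive Hermite degrees near the top index $N$ together with $t$-derivatives of $U$ --- and eliminating the auxiliary quantities closes the system on $U$ alone, producing $\sum_{j=0}^{7}P_j(t)\,U^{(j)}(t)=Q(t)$ with explicit polynomial coefficients $P_j$ and a polynomial inhomogeneity $Q$. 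In the fourth step, equating the coefficient of $t^{p}$ for $p$ large enough that $Q$ no longer contributes and that all indices $2p,2p-2,\dots,2p-20$ are nonnegative --- that is, $p\ge 10$ --- and clearing the common denominators yields exactly \eqref{M recurrence main} with $\mathfrak{A}_l$ as in \eqref{mathfrak A def}. One then checks that setting $\tau=1$ reduces the operator and recovers Ledoux's five-term relation \eqref{HZ formula for GOE}, while $\tau=0$ gives a three-term relation for the GinOE.

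The main obstacle is the third step: arranging the elimination so that it closes at order exactly seven and, more delicately, keeping track of the $\tau$-dependence --- which enters simultaneously through the Gaussian weight $e^{-x^2/(2(1+\tau))}$, the $(1-\tau^2)$ normalization of the skew inner product, and the rescaling of the Hermite variable --- so that the polynomial coefficients $P_j$ collapse into the compact closed form recorded in \eqref{mathfrak A def}, including the denominators $(1+6\tau+2N(1-\tau^2))$ and $(4+17\tau+6\tau^2+2N(1-\tau^2))^2$. Once the correct normalization of $U$ and the right set of auxiliary transforms are identified, the remainder is a long but essentially mechanical computation; the real work lies in that identification and in the subsequent algebraic simplification.
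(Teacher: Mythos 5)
Your overall route is the same as the paper's: decompose the real-eigenvalue density via the Forrester--Nagao skew-orthogonal formalism, pass to the moment generating function, derive a seventh-order linear ODE with polynomial coefficients by Gaussian integration by parts and Hermite identities, and extract the recurrence by comparing Taylor coefficients. However, the decisive step --- showing that the elimination actually \emph{closes} into a seventh-order equation --- is asserted rather than derived, and this is exactly where the real content lies. In the paper the closure has a specific mechanism: writing $M=u+v$ as in \eqref{M u v}, one first proves that the GinUE part $u$ alone satisfies a \emph{third-order} ODE (Proposition~\ref{Prop_u ODE}); then one proves a mixed identity (Proposition~\ref{Prop_ODE M u}) saying that a third-order operator applied to $M$ --- the function $V$ of \eqref{V definition} --- equals a third-order operator applied to $u$; the key and least obvious ingredient is Lemma~\ref{Lem_u and V}, which inverts this relation and expresses $u=\alpha V+\beta V'+\gamma V''$ with rational coefficients (possible only because the $u$-ODE lets one reduce higher derivatives). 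Substituting back yields a fourth-order ODE for $V$ and hence the seventh-order ODE for $M$ (Theorem~\ref{Thm_M ODE}). Your ``small finite system of auxiliary transforms indexed by consecutive Hermite degrees'' gives no reason why the system should close at finite order, let alone at order seven; without an analogue of Lemma~\ref{Lem_u and V} the plan does not go through.

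Several specifics are also off, though none is the main issue. The paper's ODE \eqref{M ODE main} is homogeneous, so there is no polynomial inhomogeneity $Q$; the restriction $p\ge 10$ comes only from requiring all indices $2p-2l$, $l\le 10$, to be admissible and the division by $(p-4)(p-3)(p-2)$ in \eqref{mathfrak A def} to be legitimate. The prefactor $2(2p+1)(1-\tau)^6$ and the denominators $(1+6\tau+2N(1-\tau^2))\,(4+17\tau+6\tau^2+2N(1-\tau^2))^2$ are not produced by any special normalization of the MGF --- the paper uses the plain Laplace transform \eqref{M(t) definition} and these factors emerge from the explicit coefficient of $M_{2p}$ when collecting the $t^j$-coefficient of the ODE. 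Finally, no closed-form confluent-hypergeometric evaluation of the full MGF is available or used for general $\tau$: the collapse \eqref{u tau1} is special to $\tau=1$, the correction part $v$ is never evaluated in closed form (it is handled purely through differential relations), and even the discrete representation \eqref{u discrete rep} of $u$ plays no role in the proof, which works from the integral representation \eqref{u integral rep}.
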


We note that while it is possible to write the coefficients $\mathfrak{A}_l$ explicitly, the resulting expressions are seemingly complicated, see Section~\ref{Section_polynomials}.  
Nevertheless, when considering the extremal cases $\tau=0,1$, we encounter dramatic simplifications: 
\begin{equation}
\mathfrak{A}_l=0, \qquad  \begin{cases}
\textup{if }\tau=0 \textup{ and } l \ge 3,
\smallskip 
\\
\textup{if }\tau=1 \textup{ and } l \le 5. 
\end{cases}
\end{equation}
This leads to the following corollary.

\begin{cor}[\textbf{Recursion formula of the GinOE and GOE}] \label{Cor_Recursion GinOE GOE}
For any even integer $N \ge 2$, we have the following.
\begin{itemize}
    \item[\textup{(i)}] \textup{\textbf{The GinOE case ($\tau=0$)}}. 
For any integer $p \ge 2$, we have the 3-term recurrence relation 
\begin{align}
\begin{split} \label{GOE recursion}
2(2p+1) M_{2p,0}  &= (2p-1) (6p+4N-5) M_{2p-2,0}  - (2p-3) (2p+N-4)(2p+2N-3) M_{2p-4,0}.
\end{split}
\end{align}
    \item[\textup{(ii)}] \textup{\textbf{The GOE case ($\tau=1$)}}.  For any integer $p \ge 4$, the recurrence relation \eqref{HZ formula for GOE} holds.   
\end{itemize}
\end{cor}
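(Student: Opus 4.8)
The plan is to obtain both items by specializing Theorem~\ref{Thm_main} at $\tau=0$ and $\tau=1$ and then simplifying the coefficients $\mathfrak{A}_l(p;\tau)$ of \eqref{mathfrak A def} with the help of the explicit polynomials $A_l(t)$ listed in Section~\ref{Section_polynomials}. The only points that will require more than bookkeeping are the extension of the admissible range of $p$ in item (i) and a re-indexing in item (ii).

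For item (i), I would set $\tau=0$, so that $2(2p+1)(1-\tau)^6=2(2p+1)$, and check from the polynomials $A_l(t)$ at $\tau=0$ that the numerator sum defining $\mathfrak{A}_l(p;\tau)$ in \eqref{mathfrak A def} vanishes identically for every $l\ge 3$. Then \eqref{M recurrence main} collapses to the three-term relation $2(2p+1)M_{2p,0}=\mathfrak{A}_1(p;0)M_{2p-2,0}+\mathfrak{A}_2(p;0)M_{2p-4,0}$, and carrying out the now finite Pochhammer-symbol sum at $\tau=0$ should give $\mathfrak{A}_1(p;0)=(2p-1)(6p+4N-5)$ and $\mathfrak{A}_2(p;0)=-(2p-3)(2p+N-4)(2p+2N-3)$, which is exactly \eqref{GOE recursion} once the apparent denominators in \eqref{mathfrak A def} cancel against the numerator. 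Theorem~\ref{Thm_main} delivers this only for $p\ge 10$; to cover all $p\ge 2$ I would rerun the argument of Theorem~\ref{Thm_main} directly in the much simpler case $\tau=0$, where the one-point density of real eigenvalues of the GinOE is classical and its moment generating function satisfies a second-order rather than a seventh-order linear ODE, so that \eqref{GOE recursion} follows with no lower restriction on $p$; alternatively one checks the finitely many remaining cases $2\le p\le 9$ against the explicit low moments.

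For item (ii), I would set $\tau=1$. Then the left-hand side of \eqref{M recurrence main} vanishes identically because of the factor $(1-\tau)^6$, and from the polynomials $A_l(t)$ at $\tau=1$ one obtains the complementary vanishing $\mathfrak{A}_l(p;1)=0$ for $1\le l\le 5$, so that \eqref{M recurrence main} reduces to $\sum_{l=6}^{10}\mathfrak{A}_l(p;1)\,M_{2p-2l,1}=0$. Since the elliptic GinOE at $\tau=1$ is the GOE, all of its eigenvalues are real, $\mathcal{N}_1=N$ is deterministic, and $M_{2k,1}=M_{2k}^{\rm GOE}$; replacing $p$ by $p+6$ turns the last relation into a five-term identity among $M_{2p}^{\rm GOE},M_{2p-2}^{\rm GOE},\dots,M_{2p-8}^{\rm GOE}$, and a final simplification of $\mathfrak{A}_{6}(p+6;1),\dots,\mathfrak{A}_{10}(p+6;1)$ via \eqref{mathfrak A def} should reproduce the coefficients of \eqref{HZ formula for GOE}. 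Under $p\mapsto p+6$ the hypothesis $p\ge 10$ of Theorem~\ref{Thm_main} becomes precisely the stated $p\ge 4$.

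I expect the main obstacle to be the verification of the two vanishing patterns, $\mathfrak{A}_l(p;0)=0$ for $l\ge 3$ and $\mathfrak{A}_l(p;1)=0$ for $l\le 5$, which hinge on cancellations among the coefficients of the degree-$(17-l)$ polynomials $A_l(t)$, together with the low-$p$ cases of item (i), where \eqref{mathfrak A def} is literally singular (its denominator vanishes at $p=2,3,4$) so that \eqref{GOE recursion} there must be justified by the separate second-order argument above rather than by direct substitution. A symbolic-computation check of all the coefficient identities would be the safest way to close these points.
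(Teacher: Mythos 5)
Your part (ii) is essentially sound and, after the shift $p\mapsto p+6$, is equivalent to what the paper does; the paper itself reaches both items not by specializing the coefficients $\mathfrak{A}_l(p;\tau)$ of Theorem~\ref{Thm_main}, but by specializing the polynomials $A_k$ via Examples~\ref{Example_Bk tau01} and \ref{Examples_A tau 0 1}, obtaining the reduced differential equations of Corollary~\ref{Cor_ODE for GinOE and GOE}, and re-running the series substitution from the proof of Theorem~\ref{Thm_main} on those low-degree equations; this is what produces the wider ranges of $p$ without ever touching the singular denominator of \eqref{mathfrak A def}.

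The genuine gap is in your treatment of part (i) below $p=10$. Your primary fix rests on the claim that at $\tau=0$ the moment generating function "satisfies a second-order rather than a seventh-order linear ODE"; no such equation is established anywhere, and it conflicts with the paper's own reduced equation \eqref{M ODE tau0}, which is of order seven (only the elliptic GinUE part $u$ satisfies a third-order equation, and $M_0=u+v$ does not inherit it). Worse, even if you use the correct reduced equation \eqref{M ODE tau0} and collect the coefficient of $t^{2p-3}$, the resulting three-term relation has \emph{every} coefficient divisible by $(p-2)(p-3)(p-4)$ — this is precisely why that product appears in the denominator of \eqref{mathfrak A def} — so for $p=2,3,4$ the relation degenerates to $0=0$ and cannot be divided through; the coefficient-collection argument therefore only yields \eqref{GOE recursion} for $p\ge 5$. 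Consequently the cases $p=2,3,4$ (and, on your route via Theorem~\ref{Thm_main}, also $5\le p\le 9$) require a separate verification against explicitly computed GinOE moments for general even $N$, which your proposal mentions only as an unexecuted alternative. To close part (i) you should either carry out the series substitution on \eqref{M ODE tau0} (giving $p\ge5$) together with a direct check of $p=2,3,4$, or verify all of $2\le p\le 9$ explicitly; as written, neither step is done, and the "second-order ODE" shortcut would fail.
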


We mention that Corollary~\ref{Cor_Recursion GinOE GOE} (ii) reproduces the result \cite[Theorem 2]{Le09} of Ledoux.
On the other hand, the recurrence relation \eqref{GOE recursion} for real eigenvalues of the GinOE matrix has not been reported in the literature to the best of our knowledge. 
After the present work, the GinOE case has been further investigated in a recent literature \cite{BF24}. 
As previously mentioned, the recursion formulas can be used to study the counting statistics of various random matrix ensembles, cf. Remarks~\ref{Rem_Large N GinOE} and ~\ref{Rem_large N elliptic GinOE}.
In particular, the counting statistics of non-Hermitian random matrix ensembles have been the subject of recent active research, see e.g. \cite{ACCL22, Ch22, Ch23, ABES23, ABE23, FL22, Be23,GLX23,ACM23,SLMS21,SLMS22} and references therein.

The key ingredient for the proof of Theorem~\ref{Thm_main} is the moment generating function (the Laplace transform) 
\begin{equation} \label{M(t) definition}
M(t) \equiv M_\tau(t) := \mathbb{E} \Big[ \sum_{j=1}^{ \mathcal{N}_\tau } e^{t \,\lambda_j} \Big] 
\end{equation}
of real eigenvalues. As previously remarked, the recursion formula of the spectral moments can be derived from a linear differential equations for $M_\tau$, cf. \eqref{M tau derivatives}. 
The differential equation for $M_\tau$ is formulated in the following theorem, which is of independent interest.

\begin{thm}[\textbf{\textup{Differential equation for the MGF of the elliptic GinOE}}]\label{Thm_M ODE}
Let $\tau \in [0,1].$ Then for any even integer $N \ge 2$, the moment generating function $M_\tau(t)$ satisfies the seventh-order differential equation
\begin{equation} \label{M ODE main}
\sum_{k=0}^7 A_k(t) M^{(k)}_\tau(t)=0,
\end{equation}
where the polynomials $A_k$'s are given in \eqref{Ak definition Bj}. 
\end{thm}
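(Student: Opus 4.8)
The plan is to derive the differential equation \eqref{M ODE main} starting from an explicit closed-form expression for the density of real eigenvalues of the elliptic GinOE, obtained from the skew-orthogonal polynomial formalism of Forrester--Nagao (as recalled in Appendix~\ref{Appendix_integrable}). Concretely, I would write the $1$-point density $\rho_\tau(x)$ of real eigenvalues as a sum of two contributions: a ``bulk'' part built from the skew-orthogonal polynomials (which, in the appropriate scaling, involves Hermite polynomials $H_n$ with the non-Hermiticity parameter entering through a rescaling of the argument and the weight $e^{-x^2/(1+\tau)}$), and a lower-rank ``boundary'' correction term coming from the Pfaffian structure (analogous to how the GOE density \eqref{GOE density v2} decomposes into the Hermite Christoffel--Darboux kernel plus an extra term). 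Then $M_\tau(t) = \int_{\R} e^{tx}\rho_\tau(x)\,dx$, so that $M_\tau^{(k)}(t) = \int_{\R} x^k e^{tx}\rho_\tau(x)\,dx$.

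The core of the argument is then a Gaussian-integration-by-parts / contiguous-relation computation in the spirit of Ledoux \cite{Le09} and Haagerup--Thorbj\o rnsen \cite{HT03}: using the three-term recurrence and the differentiation rule for Hermite polynomials, together with integration by parts against $e^{tx}$, one produces a finite system of linear relations among $M_\tau(t)$ and its derivatives with polynomial-in-$t$ coefficients. The two extra ingredients compared to the GUE case are (i) the $\tau$-dependent rescaling, which multiplies the order of the needed relations, and (ii) the boundary term in $\rho_\tau$, which must be handled by finding its own auxiliary ODE and then taking an appropriate least common ``annihilator.'' I would first treat the bulk part: show that $B(t):=\int e^{tx}\rho_\tau^{\mathrm{bulk}}(x)\,dx$ satisfies a lower-order ODE, by translating the Hermite recurrences into operator identities $x \leftrightarrow \partial_t$ and $\partial_x \leftrightarrow$ multiplication. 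Then do the same for the boundary part, and finally combine the two annihilating operators — their least common left multiple in the (non-commutative) ring of differential operators with polynomial coefficients — to get a single operator of order $7$ killing $M_\tau$. The degrees $17-l$ of the polynomials $A_k$ and the order $7$ should fall out of this combination; one then identifies the coefficients with those listed in \eqref{Ak definition Bj}.

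I expect the main obstacle to be the bookkeeping in combining the bulk and boundary annihilators while keeping the order down to exactly $7$ (rather than a crude sum of the two orders) and the polynomial coefficients of controlled degree. In particular, the naive product of annihilators would give too high an order, so one needs to exploit cancellations — effectively, to find a genuinely minimal-order operator. A secondary difficulty is controlling the behaviour at the special values $\tau = 0$ and $\tau = 1$, where the factor $(1-\tau)^6$ and the denominators in $\mathfrak{A}_l$ degenerate; one should check that the ODE \eqref{M ODE main}, as opposed to the recursion \eqref{M recurrence main}, is regular in $\tau$ throughout $[0,1]$, so that the $\tau \uparrow 1$ limit can be taken directly and matched against the known GOE differential equation from \cite{Le09}. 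Once the ODE is in hand, verifying it is in principle a finite computation (plug in the explicit $\rho_\tau$), and the stated coefficients $A_k$ can be confirmed — or, more honestly, were discovered — by this route, possibly with computer-algebra assistance for the final simplification.
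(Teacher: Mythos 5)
Your starting point matches the paper's: decompose $R_N=R_N^{(1)}+R_N^{(2)}$ as in Lemma~\ref{Lem_RN Hermite}, set $u,v$ as in \eqref{u v definition} with $M=u+v$, and work with Hermite recurrences plus Gaussian integration by parts in the spirit of Ledoux. The genuine gap is at the decisive step, the one you yourself flag as the ``main obstacle'': you propose to find a standalone annihilator for the boundary part $v$ and then take a least common left multiple with the order-$3$ annihilator of $u$, hoping that cancellations bring the order down to $7$ and reproduce the coefficients \eqref{Ak definition Bj}. You give no mechanism for either. In fact the natural computation does not produce a standalone ODE for $v$ at all: when you differentiate $v$ and integrate by parts, the derivative of the inner integral $\int_0^x e^{-u^2/2(1+\tau)}H_{N-2}(u/\sqrt{2\tau})\,du$ regenerates the integrand of the bulk MGF, so what comes out is a \emph{coupled} relation between $v$ (equivalently $M$) and $u$ --- this is exactly Proposition~\ref{Prop_ODE M u} (proved via Proposition~\ref{Prop_v u ODE} and Lemma~\ref{Lem_v rho v0}), not an annihilator of $v$ alone. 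Eliminating $u$ from that coupled system is the actual content of the proof, and ``LCLM plus hoped-for cancellations'' does not supply it.

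The missing idea is the elimination device the paper uses. One sets $V(t)$ to be the third-order expression in $M$ given by \eqref{V definition}, so that Proposition~\ref{Prop_ODE M u} says $V$ equals a third-order operator applied to $u$. The key step is Lemma~\ref{Lem_u and V}: using the third-order ODE for $u$ (Proposition~\ref{Prop_u ODE}) one can \emph{invert} this relation and write $u=\alpha V+\beta V'+\gamma V''$ with explicit rational coefficients built from \eqref{a(t) definition}--\eqref{d(t) definition}. Feeding this back into the $u$-equations yields a closed fourth-order ODE for $V$, namely \eqref{V ODE} with the polynomials $B_k$, and since $V$ is itself a third-order operator applied to $M$, composition gives precisely the seventh-order equation \eqref{M ODE main}; this is where the order $7=4+3$ and the degrees in \eqref{mathfrak a lk definition} come from, rather than from minimality of some LCLM. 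Without an analogue of the inversion Lemma~\ref{Lem_u and V} (or some other concrete elimination of $u$ from the coupled system), your plan does not reach the stated theorem: it would at best produce some annihilator of $M$ of uncontrolled order, with no identification with the $A_k$ of \eqref{Ak definition Bj}. Your secondary concern about regularity at $\tau=0,1$ is not an issue in the paper's route, since all coefficients are polynomial in $\tau$ and the degenerations are checked directly in Examples~\ref{Example_Bk tau01} and~\ref{Examples_A tau 0 1}.
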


The existence of such a linear differential equation for the moment generating function is already far from obvious.
Furthermore, it is crucial to highlight that the coefficients in \eqref{M ODE main} are polynomials. This, in turn, results in a finite-term recurrence relation for the spectral moments.


For the extremal cases, explicit evaluations of the polynomials $A_k$ are provided in Example~\ref{Examples_A tau 0 1}, where we once again observe significant simplifications. As a consequence, we have the following immediate corollary.

\begin{cor}[\textbf{Differential equation for the MGF of the GinOE and GOE}]
\label{Cor_ODE for GinOE and GOE}
For any even integer $N \ge 2$, we have the following.

\begin{itemize}
    \item[\textup{(i)}] \textup{\textbf{The GinOE case ($\tau=0$)}}. For $\tau=0$, we have 
\begin{equation} \label{M ODE tau0}
\sum_{k=1}^7 C_k(t) M^{(k)}_0(t)=0,
\end{equation}
where 
\begin{align}
\begin{split}
C_7(t) &= 2 \, t^4,
\qquad 
C_6(t) = -3 \, t^5+2 \, t^3,
\qquad 
C_5(t) = t^6-(4N+13)\, t^4-42\, t^2,
\\ 
C_4(t) &= (3N+5)\,t^5+4(N+10)\,t^3+120\,t,
\qquad  
C_3(t) = (N-1)(2N+3) \, t^4+36N \, t^2-120,
\\ 
C_2(t) &= -6(N+1)^2\, t^3-120(N+1)\, t, 
\qquad 
C_1(t) = 6(N+1)^2\, t^2+120(N+1). 
\end{split}
\end{align}
    \item[\textup{(ii)}] \textup{\textbf{The GOE case ($\tau=1$)}}. For $\tau=1$, we have 
\begin{equation}
\sum_{k=0}^4 C_k(t) M^{(k)}_1(t)=0,
\end{equation}
where 
\begin{align}
\begin{split}
C_4(t) &= t,
\qquad 
C_3(t) = 5, 
\qquad 
C_2(t) = -5t^3-(8N-4)t,
\\
C_1(t) &= -36\,t^2-20N+10,
\qquad 
C_0(t) =  4\,t^5+(20N-10)\,t^3+(16N^2-16N-44)t. 
\end{split}
\end{align}
\end{itemize}

\end{cor}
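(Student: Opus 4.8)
The plan is to read both differential equations directly off Theorem~\ref{Thm_M ODE}. That theorem asserts $\sum_{k=0}^{7} A_k(t)\,M_\tau^{(k)}(t)=0$ for \emph{every} $\tau\in[0,1]$ and every even $N\ge 2$, hence in particular at the two endpoints $\tau=0$ and $\tau=1$; so the whole task is to specialise the explicit polynomial coefficients $A_0(t),\dots,A_7(t)$ of \eqref{Ak definition Bj} --- equivalently, their $t$-coefficients $\mathfrak{a}_{l,k}(\tau)$ of \eqref{mathfrak a lk definition} --- at $\tau=0$ and $\tau=1$, and to verify the asserted collapses. One could instead re-run the proof of Theorem~\ref{Thm_M ODE} from scratch in each special case, where many terms vanish (for $\tau=1$ this is essentially Ledoux's computation \cite{Le09}), but obtaining the result as a specialisation is what makes it a genuine corollary.

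\textbf{The GinOE case ($\tau=0$).} First I would check from \eqref{Ak definition Bj} that the lowest coefficient vanishes identically, $A_0(t)|_{\tau=0}\equiv 0$, so that the sum in \eqref{M ODE main} starts at $k=1$. Next, although $A_k(t)$ has degree $17-k$ for generic $\tau$, its higher-degree coefficients carry strictly positive powers of $\tau$ and so drop out at $\tau=0$; performing the substitution, one is left precisely with the low-degree polynomials $C_1,\dots,C_7$ of part~(i), possibly after dividing the equation through by a common nonzero factor. As a consistency check I would substitute the first several even moments $M_{0,0},M_{2,0},M_{4,0},\dots$, computed directly from \eqref{Mp definition}, into $\sum_{k=1}^{7}C_k(t)\,M_0^{(k)}(t)=0$; equivalently, the moment recurrence extracted from this equation (as in the passage from \eqref{M ODE main} to \eqref{M recurrence main}, after clearing a common polynomial factor in $p$) should reduce to the three-term recursion \eqref{GOE recursion}, which can itself be checked against the explicit low moments and against \cite{BF24}.

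\textbf{The GOE case ($\tau=1$).} Here I would check from \eqref{Ak definition Bj} that the three highest coefficients vanish identically at $\tau=1$, i.e.\ $A_5(t)|_{\tau=1}=A_6(t)|_{\tau=1}=A_7(t)|_{\tau=1}\equiv 0$, so that \eqref{M ODE main} degenerates to a fourth-order equation, and that the four surviving coefficients reduce, up to an overall nonzero constant, to the polynomials $C_0,\dots,C_4$ of part~(ii). For consistency, this must be the differential equation underlying Ledoux's recursion: translating it into a moment recurrence as in the proof of Theorem~\ref{Thm_main}, the degeneration of the prefactor $2(2p+1)(1-\tau)^6$ at $\tau=1$, together with the parity of the moments, forces an index shift that converts \eqref{M recurrence main} into the five-term relation \eqref{HZ formula for GOE} of \cite[Theorem~2]{Le09}.

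The only real work is the long but mechanical bookkeeping involved in specialising the somewhat bulky expressions \eqref{Ak definition Bj}. I expect the main obstacle to be nothing deeper than reliably tracking the many cancellations --- the identical vanishing of $A_0$ at $\tau=0$ and of $A_5,A_6,A_7$ at $\tau=1$, and the sharp drop in degree of the surviving coefficients --- so I would carry the computation out symbolically and cross-check it against the low moments and against \cite{Le09} as above.
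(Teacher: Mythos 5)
Your proposal is correct and follows essentially the same route as the paper, which obtains the corollary by specialising the explicit coefficients $A_k$ at $\tau=0,1$ (via the simplified $B_k$'s of Example~\ref{Example_Bk tau01} and the reductions of Example~\ref{Examples_A tau 0 1}) and dividing out the common factor, namely $-128(2N+1)(N+2)^2$ at $\tau=0$ and $186624\,t^{12}$ at $\tau=1$. The only slight imprecision is that at $\tau=1$ the common factor is a monomial in $t$ rather than a constant, but since $\sum_k A_k M_1^{(k)}$ is analytic this is removed by continuity and does not affect the argument.
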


We stress that Corollary~\ref{Cor_ODE for GinOE and GOE} (ii) for the GOE was previously derived and used in \cite[Section 3]{Le09}. 
On the other hand, the differential equation \eqref{M ODE tau0} for the GinOE is new to our best knowledge.
However, the spectral moments of the GinOE real and complex eigenvalues combined were studied in \cite{FR09,SK09}, see also \cite{BF24}.   

\medskip 

Let us discuss Corollary~\ref{Cor_ODE for GinOE and GOE} (i) from the viewpoint of the large-$N$ limit of the GinOE.

\begin{rem}[\textbf{Large-$N$ expansion of the GinOE}] \label{Rem_Large N GinOE}
For the GinOE case $\tau=0$, we define the rescaled moment generating function
\begin{equation}
\widetilde{M}(t):= \frac{1}{\sqrt{N}} M\Big(\frac{t}{\sqrt{N}}\Big) = \int_\R e^{tx} \rho_N(x) \,dx, 
\end{equation}
where $\rho_N(x):=R_N(\sqrt{N}x)$ is the rescaled density, see \eqref{RN rescaled tau0} for its explicit formula. 
By using \eqref{M ODE tau0} and the change of variables, one can observe that $\widetilde{M}$ satisfies the differential equation
\begin{equation} 
\sum_{k=1}^7  \widetilde{C}_k(t)\, \widetilde{M}^{(k)}(t)=0, \qquad  \widetilde{C}_k(t) : = N^{k/2}\,C_k\Big(\frac{t}{\sqrt{N}}\Big) .
\end{equation}
Expanding $\widetilde{C}_k$'s for large-$N$, this equation can be rewritten as 
\begin{equation} \label{M mathfrak D 0}
\mathfrak{D}(t) \Big[\widetilde{M}(t)\Big]=0 , \qquad 
\mathfrak{D}(t):=\mathfrak{D}_0(t) + \frac{\mathfrak{D}_1(t)}{N} + \frac{\mathfrak{D}_1(t)}{N^2},
\end{equation}
where the linear differential operators $\mathfrak{D}_{k}$'s are given by 
\begin{align}
\begin{split}
\mathfrak{D}_{0}(t)&:= 2 \, t^4 \, \partial_t^7 + 2 \, t^3 \,\partial_t^6  -(4t^4+ 42 t^2) \,\partial_t^5 +(4t^3+120\,t) \,\partial_t^4
\\
&\quad+ (2t^4+36t^2-120) \,\partial_t^3  -(6t^3+120t)\,\partial_t^2 +(6t^2+120)\,\partial_t,
\end{split}
\\
\begin{split}
\mathfrak{D}_{1}(t)&:= -3 \, t^5 \,\partial_t^6-13\,t^4\,\partial_t^5 +(3\,t^5+40\,t^3) \,\partial_t^4 + t^4\,\partial_t^3 -(12t^3+120t ) \,\partial_t^2 + (12t^2+120) \,\partial_t,
\end{split}
\\
\begin{split}
\mathfrak{D}_{2}(t)&:= t^6\,\partial_t^5 +5t^2\,\partial_t^4  -3 \, t^4  \,\partial_t^3  -6t^3 \,\partial_t^2  +6t^2  \,\partial_t.
\end{split}
\end{align}


On the one hand, it is well known that the rescaled density $\rho_N(x)$ satisfies the expansion 
\begin{equation}
\rho_N(x) = \rho_{(0)}(x) + \rho_{(1/2)}(x) \frac{1}{\sqrt{N}}  + O\Big(\frac{1}{N}\Big) 
\end{equation} 
in the sense of distribution, where
\begin{equation}
\rho_{(0)}(x):=\frac{1}{\sqrt{2\pi}} \, \mathbbm{1}_{(-1,1)}(x), \qquad 
\rho_{(1/2)}(x):= \frac14\Big( \delta(x-1)+\delta(x+1) \Big) ,
\end{equation}
see e.g. \cite[pp. 33--34]{BF23}. 
Then it follows that 
\begin{equation}
\widetilde{M}(t)= \widetilde{M}_{(0)}(t)  +  \widetilde{M}_{(1/2)}(t) \frac{1}{\sqrt{N}}+O\Big(\frac{1}{N}\Big) ,
\end{equation}
where
\begin{equation} \label{M0 1/2 sinh cosh}
\widetilde{M}_{(0)}(t) :=\int_\R e^{tx} \rho_{(0)}(x) \,dx=  \sqrt{ \frac{2}{\pi} } \frac{ \sinh(t) }{t}, \qquad \widetilde{M}_{(1/2)}(t) :=\int_\R e^{tx} \rho_{(1/2)}(x) \,dx=  \frac{\cosh(t)}{2}. 
\end{equation}
Furthermore, by \eqref{M mathfrak D 0}, we have
\begin{align*}
\mathfrak{D}(t) \Big[\widetilde{M}(t)\Big]  = \Big[ \mathfrak{D}_0(t) + \frac{\mathfrak{D}_1(t)}{N} + \frac{\mathfrak{D}_1(t)}{N^2} \Big] \Big[\widetilde{M}_{(0)}(t)  +  \widetilde{M}_{(1/2)}(t) \frac{1}{\sqrt{N}}+O\Big(\frac{1}{N}\Big)   \Big] , 
\end{align*}
which in turn implies
\begin{equation}
\mathfrak{D}_0(t) \Big[ \widetilde{M}_{(0)}(t) \Big] =  \mathfrak{D}_0(t) \Big[ \widetilde{M}_{(1/2)}(t) \Big]=0.
\end{equation}
Since we have the explicit formulas \eqref{M0 1/2 sinh cosh}, one can directly check that these identities hold.

In the opposite direction, one can make use of \eqref{M0 1/2 sinh cosh} to find the factorisations
\begin{align*}
\begin{split}
\mathfrak{D}_{0}(t) & = 2(t^3\,\partial_t^3-6t^2\,\partial_t^2+15t\,\partial_t-15)  \circ(t\,\partial_t^2+4\,\partial_t-t)\circ(\pa_t^2-1)
\\
&= 2(t^3\,\partial_t^3-6t^2\,\partial_t^2+15t\,\partial_t-15)  \circ (\partial_t^2-1) \circ( t\,\partial_t^2+2\partial_t-t ).
\end{split}
\end{align*}
We mention that $\widetilde{M}_{(1/2)}(t)$ is annihilated by the the rightmost differential operator in the first line, while $\widetilde{M}_{(0)}(t)$ is annihilated by the the rightmost differential operator in the second line.
Furthermore, since the general solution of the differential equation
$$
(t^3\,\partial_t^3-6t^2\,\partial_t^2+15t\,\partial_t-15)f(t) =0 
$$
is of the form $f(t)=c_1t+c_2t^3+c_3t^5$, one can further factorise and obtain  
\begin{align}
\begin{split}
\mathfrak{D}_{0}(t) & = 2 (t \,\partial_t-a)\circ(t \,\partial_t-b)\circ(t \,\partial_t-c)  \circ(t\,\partial_t^2+4\,\partial_t-t)\circ(\pa_t^2-1)
\\
&= 2 (t \,\partial_t-a)\circ(t \,\partial_t-b)\circ(t \,\partial_t-c)  \circ (\partial_t^2-1) \circ( t\,\partial_t^2+2\partial_t-t ),
\end{split}
\end{align}
where $( a,b,c )$ is a permutation of $(1,3,5)$.
\end{rem}

\begin{rem}[\textbf{Large-$N$ expansion of the elliptic GinOE}] \label{Rem_large N elliptic GinOE}
Continuing the discussions from the previous remark, it is natural to consider the large-$N$ expansion of the elliptic GinOE for a general value of $\tau \in [0, 1]$.
Once again, we introduce the rescaled density $\rho_N(x) := R_N(\sqrt{N}x)$. 
In the study of the large-$N$ asymptotics of the elliptic Ginibre matrices, one needs to distinguish the following two different regimes.
\begin{itemize}
    \item  \textbf{Strong non-Hermiticity}. This is the case where $\tau$ is fixed in the interval $[0,1)$. 
    In this regime, it was shown in \cite[Section 6.2]{FN08} that 
    \begin{equation} \label{rhoN strong}
   \rho_N(x)= \frac{1}{\sqrt{2\pi(1-\tau^2)} } \,\mathbbm{1}_{(-1-\tau,1+\tau)}(x) +o(1). 
    \end{equation}
    Therefore, one can observe that 
    \begin{equation} \label{M2p leading strong}
    \int_\R x^{2p} \,\rho_N(x) \,dx \sim M_{2p}^{ \rm s }, \qquad M_{2p}^{ \rm s }:= \frac{1}{\sqrt{2\pi(1-\tau^2)} } \frac{ 2 }{ 2p+1 } (1+\tau)^{2p+1}.
    \end{equation}
    \item \textbf{Weak non-Hermiticity}. This is the case that $\tau=1-\frac{\alpha^2}{N}$ for a fixed $\alpha \in (0,\infty)$. 
    This regime was introduced by Fyodorov, Khoruzhenko and Sommers \cite{FKS97,FKS98} in the study of the critical transition of the elliptic GinUE statistics, see \cite{ACV18,AB23} for further references and physical applications.  
    In this regime, it was shown in \cite[Theorem 2.3]{BKLL23} that 
\begin{equation} \label{rhoN weak}
\rho_N(x)=  \frac{ \sqrt{N} }{2\alpha\sqrt{\pi}} \erf\Big(\frac{\alpha}{2}\sqrt{4-x^2}\Big)\,\mathbbm{1}_{(-2,2)}(x) +o(\sqrt{N}). 
\end{equation}
We mention that this formula was previously proposed by Efetov in \cite{Efe97}. 
Note that the leading order density in \eqref{rhoN weak} interpolates the uniform density in \eqref{rhoN strong} with the Wigner's semi-circle law \eqref{semi circle}. 
Using \eqref{rhoN weak}, one can show that  
\begin{equation} \label{M 2p w}
 \int_\R x^{2p}\, \rho_N(x) \,dx \sim  \sqrt{N} \,M_{2p}^{ \rm w }, \qquad  M_{2p}^{ \rm w }:= \frac{ 1 }{ \pi } \,  \sum_{n=0}^{\infty}\frac{ 2^{2p+1}  }{ 2n+1 }      \frac{ \Gamma(n+\frac32) \Gamma(p+\frac12) }{ n!\, (n+p+1)! } \, (-\alpha^2)^{n} .
\end{equation}
We stress that in the Hermitian limit $\alpha \to 0$, the moment $M_{2p}^{ \rm w }$ recovers the Catalan number in \eqref{Catalan}, i.e. 
\begin{equation}
\lim_{\alpha \to 0} M_{2p}^{ \rm w } = c(0;p). 
\end{equation}
In the opposite limit $\alpha \to \infty$, one can also match $M_{2p}^{ \rm w }$ with $M_{2p}^{ \rm s }$, see \cite{BKLL23} for a similar discussion. 
Let us also mention that $M_{2p}^{ \rm w }$ can be written in terms of the the modified Bessel function of the first kind 
\begin{equation}\label{I nu}
	I_\nu(z):=\sum_{k=0}^\infty \frac{(z/2)^{2k+\nu}}{ k! \, \Gamma(\nu+k+1) },
\end{equation}
see e.g. \cite[Chapter 10]{NIST}. To be more precise, it is of the form 
\begin{equation} \label{M 2p w I0I1}
M_{2p}^{ \rm w } =  \Big[ r_{0,2p} \, I_0\Big( \frac{\alpha^2}{2} \Big)+ r_{1,2p} \, I_1\Big( \frac{\alpha^2}{2} \Big) \Big] e^{-\frac{\alpha^2}{2}} ,
\end{equation}
where $r_{0,2p}$ and $r_{1,2p}$ are some rational functions of $\alpha$.
For instance, the first few values of $r_{0,2p}$ and $r_{1,2p}$ are given by
\begin{align}
& r_{0,0}=1, \qquad r_{0,2}= \frac{8(2\alpha^4-\alpha^2)}{5\alpha^4}, \qquad r_{0,4}= \frac{64 ( 4 \alpha^8- 6 \alpha^6 + 15 \alpha^4-24\alpha^2    ) }{9\alpha^8},
\\
&  r_{1,0}=1, \qquad r_{1,2}= \frac{8(2\alpha^4-3\alpha^2+4)}{5\alpha^4}, \qquad r_{1,4}=  \frac{64( 4 \alpha^8 - 10 \alpha^6  + 27 \alpha^4 - 60 \alpha^2+96 )}{9\alpha^8}.
\end{align}
In Appendix~\ref{Appendix_integrable}, we provide the derivation of \eqref{M 2p w} and \eqref{M 2p w I0I1}. 
\end{itemize}
We have discussed the leading-order asymptotics of the spectral moments for both the strong and weak non-Hermiticity regimes. 
The more precise asymptotic expansions require extending the main findings in \cite{FN08,BKLL23}, which exceed the scope of this paper. 
We hope to come back to this topic in future work and find further applications of Theorems~\ref{Thm_main} and ~\ref{Thm_M ODE}.
\end{rem}

\subsection*{Plan of the paper}
The rest of this paper is organised as follows.
In Section~\ref{Section_polynomials}, we present explicit formulas for the polynomials $A_k$ used in Theorems~\ref{Thm_main} and ~\ref{Thm_M ODE}.
Section~\ref{Section_proof main} provides an outline of the proof and culminates in the proof of the main theorems.
Section~\ref{Section_ODEs} is devoted to the remaining proofs, especially the derivation of several differential equations used to prove Theorem~\ref{Thm_M ODE}. 
Appendix~\ref{Appendix_integrable} provides a summary of the integrable structure of the elliptic GinOE due to Forrester and Nagao \cite{FN08} and also provides derivations of the formulas in Remark~\ref{Rem_large N elliptic GinOE}.

\subsection*{Acknowledgements} 
The author gratefully thanks Peter J. Forrester for several helpful comments and discussions, notably for directing the author's attention to the discussion in Remark~\ref{Rem_Large N GinOE}. 
Thanks are extended to Nick Simm for his interest and valuable suggestions on Remark~\ref{Rem_large N elliptic GinOE}.
The author also thanks Jaeseong Oh for stimulating conversations.


\section{Polynomial coefficients} \label{Section_polynomials}

In this section, we introduce the polynomials $A_k$ used in Theorems~\ref{Thm_main} and ~\ref{Thm_M ODE}. 
These polynomials are constructed from the following basic polynomials:
\begin{align}
\begin{split} \label{a(t) definition}
a(t) &:= -2\tau^2(1+\tau)^3(1+2\tau) \,t^6 + 4\tau(1+\tau)^2 \Big(1+8\tau-5 \tau (1-\tau^2)N \Big)\, t^4
\\
&\quad +8(1-\tau^2) \Big( 1-3\tau-30\tau^2+(1-\tau^2)(3+4\tau+4\tau^2)N+2(1-\tau^2)^2 N^2 \Big) \,t^2
\\
&\quad +32(1-\tau)^2 \Big(1+6\tau+2(1-\tau^2)N \Big);
\end{split}
\\
\begin{split} \label{b(t) definition}
b(t)  &:= -2 \tau (1 + \tau)^2 ( 1 - \tau - 6 \tau^2) \, t^5+ 4 ( 1 - \tau^2) \Big( ( 1 - 2 \tau) (1 + 7 \tau) + 2  ( 1 - \tau)^3 (1 + \tau) N   \Big) \, t^3
\\
&\quad - 32 ( 1 - \tau)^2 \Big( 1 + 6 \tau + 2  ( 1 - \tau^2 ) N \Big) \, t;
\end{split}
\\
\begin{split}  \label{c(t) definition}
c(t)  &:=  4 \tau (1 - \tau^2)  (1 + 2 \tau) \,t^4
 - 8 (1 - \tau)^2 \Big( 1 + 6 \tau + 2 N ( 1 - \tau^2)\Big)\, t^2;
\end{split}
\\
 \label{d(t) definition}
d(t) & :=  \tau (1 + \tau) (2 + \tau) (1 + 2 \tau) \,t^5 -  2 (1 - \tau) \Big(  4+17\tau+6\tau^2+2N(1-\tau^2)  \Big)  t^3.
\end{align}
In Lemma~\ref{Lem_u and V}, these polynomials appear as linear coefficients when expressing the elliptic GinUE part of the moment generating function $u(t)$ in terms of the function $V(t)$, cf. \eqref{u v definition} and \eqref{V definition}.

\subsection{The polynomials $B_k$} \label{Subsection beta jk}

First, we introduce the polynomials $B_k$ $(k=0,\dots,4)$, which serve as the linear coefficients in the differential equation for the function $V(t)$, see \eqref{V ODE}. 
Before providing their definitions, let us mention some of their basic properties:
\begin{itemize}
    \item In general, the degree of polynomials $B_k$ is $14-k$;
    \smallskip
    \item The polynomial $B_k$ satisfy $B_k(t) = O(t^k)$ as $t\to 0$; 
    \smallskip 
    \item The polynomial $B_k$ is even if $k$ is even and odd if $k$ is odd.
\end{itemize}
To be explicit, the polynomials $B_k$ are defined as 
\begin{equation} \label{Bk definition}
B_k(t)= \frac{1}{t^4} \Big( \beta_{k,0}(t)  d(t)^2 + \beta_{k,1}(t)   d(t) d'(t) + \beta_{k,2}(t) d(t) d''(t) + \beta_{k,3}(t) d'(t)^2   \Big),
\end{equation}
where $d(t)$ is given in \eqref{d(t) definition}. 
Here, the polynomials $\beta_{k,j}$'s are given in terms of $a(t), b(t)$ and $c(t)$ in \eqref{a(t) definition}, \eqref{b(t) definition} and \eqref{c(t) definition} as follows.

\begin{itemize}
    \item $k=4$:
    \begin{align*}
    \beta_{4,0}(t)=  4(1-\tau) \,c(t), \qquad \beta_{4,1}(t)=\beta_{4,2}(t)=\beta_{4,3}(t)= 0. 
    \end{align*}
    \item $k=3$:
    \begin{align*}
    \begin{split}
    &\beta_{3,0}(t)= 4(1-\tau) \, \Big( b(t)+2 c'(t) \Big) +  \Big( 2\tau(1+\tau)(4-\tau) \,t^2 -4(1-\tau) \Big) \frac{ c(t) }{ t },
    \\
    &\beta_{3,1}(t) =  -  8(1-\tau)\, c(t), \qquad \beta_{3,2}(t)=\beta_{3,3}(t)=0.
    \end{split}
    \end{align*}
    \item $k=2$: 
    \begin{align*}
    \begin{split}
    &\beta_{2,0}(t) = 4(1-\tau)\, \Big(a(t) + 2 b'(t) + c''(t) \Big) 
    + \Big( 2\tau(1+\tau)(4-\tau) \,t^2 - 4(1-\tau) \Big)  \frac{ b(t)+ c'(t)}{ t } +  3\tau^2  (1+\tau)^2 \,t^2\, c(t) ,
    \\
    &\beta_{2,1}(t) = - 8(1-\tau) \,\Big( b(t)+c'(t)  \Big)- \Big( 2\tau(1+\tau)(4-\tau) \,t^2 -4(1-\tau) \Big) \frac{c(t)}{t} ,  
    \\
    &\beta_{2,2}(t) = -4(1-\tau) c(t), \qquad \beta_{2,3}(t)=  8(1-\tau) c(t).
    \end{split} 
    \end{align*}
    \item $k=1$:
    \begin{align*}
    \begin{split}
    & \beta_{1,0}(t) =  4(1-\tau)\Big(2 a'(t) + b''(t)\Big) 
    +\Big( 2\tau(1+\tau)(4-\tau) \,t^2 -4(1-\tau) \Big)  \frac{a(t)+b'(t)}{t} +  3\tau^2  (1+\tau)^2 \,t^2 b(t),
    \\
    &\beta_{1,1}(t) = -  8(1-\tau) \Big(a(t)+b'(t)\Big) -  \Big( 2\tau(1+\tau)(4-\tau) \,t^2 -4(1-\tau) \Big) \frac{ b(t) }{ t }, 
    \\
    &\beta_{1,2}(t) = -4(1-\tau)\, b(t), \qquad \beta_{1,3}(t)=8(1-\tau)  \, b(t) .
    \end{split}
    \end{align*}
    \item $k=0$: 
    \begin{align*}
    \begin{split}
    & \beta_{0,0}(t) =  4(1-\tau) \,a''(t)+ \Big( 2\tau(1+\tau)(4-\tau) \,t^2 -4(1-\tau) \Big) \frac{a'(t)}{t} 
 + 3\tau^2  (1+\tau)^2 \,t^2\, a(t) -4 N \tau^2(1+\tau)^5\,t \,d(t) ,
    \\
    & \beta_{0,1}(t) = -  8(1-\tau)a'(t) -\Big( 2\tau(1+\tau)(4-\tau) \,t^2 -4(1-\tau) \Big)  \frac{a(t)}{t} , 
    \\
    & \beta_{0,2}(t) = - 4(1-\tau)\,a(t), \qquad \beta_{0,3}(t)= 8(1-\tau) a(t) .
    \end{split}
    \end{align*}
\end{itemize}

Before explaining the use of the polynomials $B_k$, let us consider their extremal cases $\tau=0,1$, which highlight how they simplify in these situations.

\begin{ex} \label{Example_Bk tau01}
For the extremal cases $\tau=0,1$, we have the following. 

\begin{itemize}
    \item The GinOE case ($\tau=0$). In this case, it follows from 
    \begin{align}
    \begin{split}
a(t) &= 8 (2N+1)(N+1) \,t^2 +32 (2N+1),
\qquad 
b(t)   =  4  (2N+1)\, t^3- 32  (2N+1)\, t,
\\
c(t)  & =  - 8 (2N+1)\, t^2,
\qquad 
d(t) = -4(N+2) \,t^3  
    \end{split} 
\end{align}
that 
\begin{align}
\begin{split}
B_4(t)&= 256 (2N+1)(N+2)^2  \Big(-2t^4\Big), 
\qquad 
B_3(t) = 256 (2N+1) (N+2)^2  \Big( t^5-2t^3 \Big),
\\
B_2(t) &= 256 (2N+1)(N+2)^2  \Big(2 (N+1) t^4+42 t^2   \Big),
\\
B_1(t) &= 256 (2N+1)(N+2)^2  \Big( -6(N+1) t^3 -120 t   \Big),
\\
B_0(t) &= 256 (2N+1)(N+2)^2  \Big(6(N+1)t^2+120 \Big).
\end{split}
\end{align}
    \item The GOE case ($\tau=1$). In this case, it follows from 
    \begin{equation}
a(t) = -48 \,t^6+ 144\,t^4, \qquad b(t)   =  48\,t^5, \qquad c(t)   = 0, \qquad d(t) = 18t^5 
\end{equation}
that
\begin{align}
\begin{split}
B_4(t) &= B_3(t) = 0, \qquad B_2(t) = 186624 \,t^{12} ,
\\
B_1(t) &= 559872 \, t^{11} ,
\qquad
B_0(t) = 186624 \, t^{10} ( -t^4-2(2N-1)t^2-3 ). 
\end{split}
\end{align} 
\end{itemize}
\end{ex}

\subsection{The polynomials $A_k$} \label{Subsection alpha jk}

We are now ready to introduce the polynomials $A_k$ $(k=0,\dots, 7)$ used in Theorems~\ref{Thm_main} and ~\ref{Thm_M ODE}. 
The polynomials $A_k$'s are defined in terms of $B_k$'s in the previous subsection as  
\begin{equation} \label{Ak definition Bj}
A_k(t) = \sum_{j=0}^4 \alpha_{k,j}(t)\,B_j(t),
\end{equation}
where $\alpha_{k,j}$'s are given as follows. 

\begin{itemize}
    \item $k=7$: 
    \begin{align*}
\alpha_{7,4}(t)=  \frac{1-\tau}{2}, \qquad \alpha_{7,3}(t)=\alpha_{7,2}(t)=\alpha_{7,1}(t)=\alpha_{7,0}(t)=0.
\end{align*}
   \item $k=6$:  
\begin{align*}
\begin{split}
&\alpha_{6,4}(t)=  -\frac{(1+\tau)(\tau^2-3\tau+1)}{2} \,t, 
\qquad \alpha_{6,3}(t)=  \frac{1-\tau}{2}, \qquad  \alpha_{6,2}(t)=\alpha_{6,1}(t)=\alpha_{6,0}(t)=0.
\end{split}
\end{align*}
  \item $k=5$: 
  \begin{align*}
  \begin{split}
&\alpha_{5,4}(t) = - \frac{1+\tau}{2}  \Big( 2\tau(1-\tau^2) \, t^2 +(1-\tau^2)N+5-15\tau+6\tau^2 \Big) , 
\\
& \alpha_{5,3}(t) =  -\frac{(1+\tau)(\tau^2-3\tau+1)}{2} \,t, \qquad \alpha_{5,2}= \frac{1-\tau}{2}, \qquad 
\alpha_{5,1}(t)=\alpha_{5,0}(t)=0. 
  \end{split}
\end{align*}
\item $k=4$: 
\begin{align*}
\begin{split}
&\alpha_{4,4}(t)=  -\frac{\tau (1+\tau)^2}{2}    \Big( \tau(1+\tau) \,t^2+(1+\tau)N+18-19\tau \Big)   \,t,
\\
& \alpha_{4,3}(t) = - \frac{1+\tau}{2}   \Big( 2\tau(1-\tau^2) \, t^2 +(1-\tau^2)N+ 4-12\tau+5\tau^2 \Big),
\\
& \alpha_{4,2}(t) =  -\frac{(1+\tau)(\tau^2-3\tau+1)}{2} \,t,
\qquad \alpha_{4,1}(t)= \frac{1-\tau}{2}, \qquad \alpha_{4,0}(t)=0 .
\end{split}
\end{align*}
\item $k=3$:
\begin{align*}
\begin{split}
&\alpha_{3,4}(t)= -2\tau(1+\tau)^2     \Big( 3\tau(1+\tau) \,t^2+(1+\tau)N+8-9\tau \Big) ,
\\
&\alpha_{3,3}(t)=  -\frac{\tau (1+\tau)^2}{2}    \Big( \tau(1+\tau) \,t^2+(1+\tau)N+14-15\tau \Big)   \,t ,  
\\
&\alpha_{3,2}(t)=  - \frac{1+\tau}{2}  \Big( 2\tau(1-\tau^2) \, t^2 +(1-\tau^2)N+ 3-9\tau+4\tau^2 \Big)  ,
\\
&\alpha_{3,1}(t) -\frac{(1+\tau)(\tau^2-3\tau+1)}{2} \,t\, B_1(t) , \qquad \alpha_{3,0}(t)= \frac{1-\tau}{2} B_0(t). 
\end{split}
\end{align*}
\item $k=2$:
\begin{align*}
\begin{split}
&\alpha_{2,4}(t) = -18\tau^2(1+\tau)^3   \, t, 
\qquad \alpha_{2,3}(t)=     -\frac{3\tau(1+\tau)^2}{2}  \Big( 3\tau(1+\tau) \,t^2+(1+\tau)N+6-7\tau \Big) ,
\\
&\alpha_{2,2}(t)= -\frac{\tau (1+\tau)^2}{2}  \Big( \tau(1+\tau) \,t^2+(1+\tau)N+10-11\tau \Big)   \,t ,
\\
&\alpha_{2,1}(t)= - \frac{1+\tau}{2} \Big( 2\tau(1-\tau^2) \, t^2 +(1-\tau^2)N+2-6\tau+3\tau^2 \Big) ,
\qquad  \alpha_{2,0}(t)-\frac{(1+\tau)(\tau^2-3\tau+1)}{2} \,t. 
\end{split}
\end{align*}
\item $k=1$:
\begin{align*}
\begin{split}
&\alpha_{1,4}(t)= -12\tau^2(1+\tau)^3,\qquad \alpha_{1,3}(t)= -9\tau^2(1+\tau)^3   \, t, 
\\
&\alpha_{1,2}(t)=-\tau(1+\tau)^2   \Big( 3\tau(1+\tau) \,t^2+(1+\tau)N+4-5\tau \Big) ,
\\
&\alpha_{1,1}(t) = -\frac{\tau(1+\tau)^2}{2}  \Big( \tau(1+\tau) \,t^2+(1+\tau)N+6-7\tau \Big) \,t ,
\\
&\alpha_{1,0}(t) = - \frac{1-\tau^2}{2} \Big( 2\tau(1+\tau) \, t^2 +(1+\tau)N+1-2\tau \Big) .
\end{split}
\end{align*}
\item $k=0$:
\begin{align*}
\begin{split}
& \alpha_{0,4}(t)=0, \qquad \alpha_{0,3}(t)= -3\tau^2(1+\tau)^3, 
\qquad \alpha_{0,2}(t)= -3\tau^2(1+\tau)^3 \,t ,
\\
&\alpha_{0,1}(t)= -\frac{\tau(1+\tau)^2}{2} \Big( 3\tau(1+\tau) \,t^2+(1+\tau)N+2-3\tau \Big), 
\\
&\alpha_{0,0}(t)=-\frac{\tau(1+\tau)^2}{2} \Big( \tau(1+\tau) \,t^2+(1+\tau)N+2-3\tau \Big) \,t.
\end{split}
\end{align*} 
\end{itemize}

\begin{rem}
It is straightforward to check that the polynomials $A_k$'s are of the form
\begin{align}
\begin{split} \label{mathfrak a lk definition}
A_7(t) &= \mathfrak{a}_{7,10} \, t^{10}+\dots+ \mathfrak{a}_{7,4} \, t^4,
\qquad 
A_6(t) = \mathfrak{a}_{6,11} \, t^{11}+\dots+ \mathfrak{a}_{6,3} \, t^3,
\\
A_5(t) &= \mathfrak{a}_{5,12} \, t^{12}+\dots+ \mathfrak{a}_{5,2} \, t^2,
\qquad 
A_4(t) = \mathfrak{a}_{4,13} \, t^{13}+\dots+ \mathfrak{a}_{4,1} \, t,
\\
A_3(t) &= \mathfrak{a}_{3,14} \, t^{14}+\dots+ \mathfrak{a}_{3,0} ,
\qquad \quad 
A_2(t) = \mathfrak{a}_{2,15} \, t^{15}+\dots+ \mathfrak{a}_{2,1} \, t,
\\
A_1(t)  &= \mathfrak{a}_{1,16} \, t^{16}+\dots+ \mathfrak{a}_{1,0},
\qquad \quad 
A_0(t) =  \mathfrak{a}_{0,17} \, t^{17}+\dots+ \mathfrak{a}_{0,7} \, t^7.
\end{split}
\end{align}
We also mention that $\mathfrak{a}_{0,2l-3} =0$ for $l = \{1,2,3,4\}$. 
Thus, in this case one can write  
\begin{equation}
\sum_{ k=1 }^{ \min\{ 10-l, 7 \} } \mathfrak{a}_{k,k+2l-3} (2p-k-2l+1)_{k+2l-3} 
\end{equation}
in the inner summation of \eqref{mathfrak A def}. 
\end{rem}

As before, let us discuss the extremal cases $\tau=0,1$, both of which exhibit notable simplifications.

\begin{ex} \label{Examples_A tau 0 1}
We have the following. 
\begin{itemize}
    \item The GinOE case ($\tau=0$). In this case, we have 
\begin{align}
\begin{split} 
A_7(t) &= \tfrac{1}{2} B_4(t), \qquad A_6(t) =  -\tfrac{1}{2} \,t\, B_4(t) + \tfrac{1}{2} B_3(t), 
\\
A_5(t) &= - \tfrac{1}{2}   (N+5)   B_4(t)   -\tfrac12 \,t \, B_3(t) + \tfrac{1}{2} B_2(t),
\\
A_4(t) &=  - \tfrac{1}{2}  (N+4) B_3(t) -\tfrac12 \,t\, B_2(t) + \tfrac{1}{2} B_1(t), 
\\
A_3(t) &=  - \tfrac{1}{2}  (N+3)  B_2(t) -\tfrac{1}{2} \,t\, B_1(t) + \tfrac12 B_0(t),
\\
A_2(t) &=  - \tfrac{1}{2} (N+2) B_1(t) -\tfrac{1}{2} \,t\, B_0(t),\qquad
A_1(t) =  - \tfrac{1}{2} (N+1) B_0(t), \qquad A_0(t) = 0.
\end{split}
\end{align}
    \item The GOE case ($\tau=1$). In this case, we have 
\begin{align}
    \begin{split}
A_7(t) &=A_6(t)= A_5(t) = 0,
\qquad
A_4(t) =  t\, B_2(t) ,
\qquad 
A_3(t) = 2  B_2(t) +t\, B_1(t) ,
\\
A_2(t) &=  -2 ( 2 \,t^2+2N-1 )   \,t\,  B_2(t)  + B_1(t)+ t\, B_0(t),
\\
A_1(t) &=  -2  ( 12\,t^2+4N-2 )  B_2(t)  -2 ( 2\,t^2+2N-1 ) \,t \, B_1(t), 
\\
A_0(t) &= -24 \,t \, B_2(t)  -2 ( 6 \,t^2+2N-1 ) B_1(t) 
 -2 ( 2 \,t^2+2N-1 ) \,t\, B_0(t) .
    \end{split}
\end{align}
\end{itemize}
\end{ex}

We mention that Examples~\ref{Example_Bk tau01} and ~\ref{Examples_A tau 0 1} immediately imply Corollaries~\ref{Cor_Recursion GinOE GOE} and ~\ref{Cor_ODE for GinOE and GOE}.


\section{Proof of main results} \label{Section_proof main}

In this section, we provide an outline and the proof of main theorems, postponing the remaining proofs of some key ingredients (Propositions~\ref{Prop_u ODE}, ~\ref{Prop_ODE M u} and Lemma~\ref{Lem_u and V}) to the next section.

The starting point of the proof is the integrable structure and skew-orthogonal polynomial representation of the $1$-point function $R_N$ of real eigenvalues. 
This is defined by its characteristic property
\begin{equation}
\mathbb{E} \bigg[\sum_{ j=1 }^{ \mathcal{N}_{\tau}  }  f( x_j ) \bigg] = \int_{ \mathbb{R} } f(x) \, R_N(x)\,dx ,
\end{equation}
where $f$ is a given test function. 
Note that the moment generating function \eqref{M(t) definition} and the spectral moments \eqref{Mp definition} can be written in terms of the $1$-point function $R_N$ as 
\begin{equation} \label{M Mp definition}
M(t):= \int_\R e^{tx} R_N(x)\, dx, 
\qquad M_p:= \int_\R x^p \,R_N(x)\,dx. 
\end{equation}

By using the skew-orthogonal polynomial formalism, the $1$-point function $R_N$ can be written in terms of the Hermite polynomials
\begin{equation}
H_k(x):= (-1)^k e^{x^2} \frac{d^n}{dx^n} e^{-x^2}.
\end{equation}
To be more precise, let us formulate \cite[Eq. (6.11)]{FN08} in the following lemma.

\begin{lem}[\textbf{Expression of the $1$-point function}] \label{Lem_RN Hermite}
For any even integer $N \ge 2$ and $\tau \in [0,1]$, we have
\begin{equation} \label{density eGinOE}
R_N(x)= R_N^{(1)}(x)+R_N^{(2)}(x),
\end{equation}
where 
\begin{align}
	R_{N}^{(1)}(x)&:=\frac{ e^{ -\frac{ x^2 }{ 1+\tau }  }  }{ \sqrt{2\pi} }  \sum_{k=0}^{N-2} \frac{(\tau/2)^k}{k!} H_k\Big( \frac{x}{ \sqrt{2\tau} } \Big)^2,  \label{RN1 def}
	\\
	\begin{split}
		R_{N}^{(2)}(x)&:=  \frac{(\tau/2)^{N-\frac32} }{ 1+\tau } \frac{ 1 }{ (N-2)! }  \frac{ e^{ -\frac{x^2 }{2(1+\tau)} } }{\sqrt{2\pi}}  H_{N-1}\Big( \frac{x}{ \sqrt{2\tau} } \Big) 
	 \int_{0}^{x} e^{ -\frac{ u^2 }{2(1+\tau)}  } H_{N-2} \Big( \frac{u}{ \sqrt{2\tau} } \Big)\,du.  \label{RN2 def}
	\end{split}
\end{align}
\end{lem}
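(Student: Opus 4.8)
The final statement to be proved is Lemma~\ref{Lem_RN Hermite}, giving the skew-orthogonal polynomial expression of the $1$-point function $R_N$ of the real eigenvalues of the elliptic GinOE. Since the excerpt explicitly attributes this to \cite[Eq.~(6.11)]{FN08} (Forrester--Nagao), the natural proof is to reconstruct their derivation from the Pfaffian point process structure.

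\textbf{Plan of the proof.} First I would recall the joint probability density of the eigenvalues of the elliptic GinOE restricted to configurations with $k$ real eigenvalues and $(N-k)/2$ complex-conjugate pairs; integrating out the imaginary parts of the complex eigenvalues and summing over $k$ expresses all correlation functions as a Pfaffian of a $2\times 2$ matrix kernel. The key input is the skew-orthogonal polynomial system associated with the relevant skew-symmetric bilinear form (combining a real--real part and a complex--complex part); Forrester and Nagao showed that, up to normalisation, these skew-orthogonal polynomials are proportional to the (rescaled) Hermite polynomials $H_k(x/\sqrt{2\tau})$, with the odd/even pairing structure $q_{2j}, q_{2j+1}$ built from $H_{2j}$ and $H_{2j+1}$ plus lower-order corrections that do not affect the kernel. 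Then the $1$-point function $R_N(x)$ is the $(1,1)$ entry of the Pfaffian kernel evaluated on the diagonal, which after using the Christoffel--Darboux-type summation collapses to the two explicit terms $R_N^{(1)}$ and $R_N^{(2)}$.

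\textbf{Key steps in order.} (1) Write the Pfaffian correlation kernel $\mathbf{K}_N$ in terms of skew-orthogonal polynomials $q_j$ and their ``dual'' functions, tracking the Gaussian weight $e^{-x^2/(1+\tau)}$ and the normalisation constants $r_j = (\tau/2)^{2j}\cdot(\text{explicit})$. (2) Insert the identification of $q_j$ with Hermite polynomials: the even-index skew-orthogonal polynomial is $q_{2j}(x) \propto H_{2j}(x/\sqrt{2\tau})$ and the odd-index one is $q_{2j+1}(x)\propto H_{2j+1}(x/\sqrt{2\tau})$ plus a multiple of $H_{2j-1}$, using the three-term recurrence to verify the skew-orthogonality relations $\langle q_{2i}, q_{2j+1}\rangle = r_j \delta_{ij}$, $\langle q_{2i},q_{2j}\rangle = \langle q_{2i+1},q_{2j+1}\rangle = 0$. (3) Extract $R_N(x) = \mathbf{K}_N(x,x)_{11}$; the diagonal value splits as a sum over $j$ from $0$ to $N/2-1$ of terms $H_{2j}^2$, $H_{2j+1}^2$-type contributions (which reassemble into the single sum $\sum_{k=0}^{N-2}\frac{(\tau/2)^k}{k!}H_k(x/\sqrt{2\tau})^2$ giving $R_N^{(1)}$) plus a boundary term involving $H_{N-1}$ and the incomplete integral $\int_0^x e^{-u^2/2(1+\tau)} H_{N-2}(u/\sqrt{2\tau})\,du$, which is precisely $R_N^{(2)}$. (4) Finally, fix all constants by matching the normalisation $\int_\R R_N(x)\,dx = M_{0,\tau}$ against a known value, and check the two extremal specialisations $\tau=0$ (GinOE) and $\tau\to 1$ (GOE density) as a consistency test.

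\textbf{Main obstacle.} The hard part is step (2): verifying that the shifted Hermite polynomials genuinely form a skew-orthogonal family for the elliptic GinOE bilinear form, including getting the correct correction terms in the odd-index polynomials and the exact normalisation constants $r_j$. This requires the Hermite three-term recurrence and integration-by-parts identities for $\int_\R e^{-x^2/(1+\tau)} H_i(x/\sqrt{2\tau}) H_j(x/\sqrt{2\tau})\,dx$ together with the more delicate ``double integral'' contributions coming from the complex-eigenvalue sector; keeping track of the $\tau$-dependent Gaussian widths and the factor $(1+\tau)^{-1}$ in front of $R_N^{(2)}$ is where sign and normalisation errors are easiest to make. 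Since this is classical work of Forrester--Nagao, I would present the verification compactly, citing \cite{FN07,FN08} for the structural Pfaffian result and carrying out only the Hermite-polynomial bookkeeping needed to land on the stated normalisations; a self-contained exposition is deferred to Appendix~\ref{Appendix_integrable}.
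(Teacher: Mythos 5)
Your proposal follows essentially the same route as the paper: the paper likewise quotes the Forrester--Nagao Pfaffian/skew-orthogonal polynomial structure (their Theorem~1 and Eq.~(6.6), i.e.\ \eqref{SOP Hermite} and \eqref{RN Phi pk}) and then carries out the Hermite-polynomial bookkeeping in Appendix~\ref{Appendix_integrable} to collapse the sum into $R_N^{(1)}+R_N^{(2)}$. The only clarification worth noting is that the collapse is not a classical Christoffel--Darboux summation (the paper stresses this tool is unavailable for real eigenvalues of asymmetric matrices) but rests on the telescoping identities $p_{2k+1}(x)=-(1+\tau)e^{\frac{x^2}{2(1+\tau)}}\frac{d}{dx}\big[e^{-\frac{x^2}{2(1+\tau)}}C_{2k}(x)\big]$ and $p_{2k+2}(x)-(2k+1)p_{2k}(x)=-(1+\tau)e^{\frac{x^2}{2(1+\tau)}}\frac{d}{dx}\big[e^{-\frac{x^2}{2(1+\tau)}}C_{2k+1}(x)\big]$, which play the role of your ``Christoffel--Darboux-type'' step and produce the constants directly, without the a posteriori normalisation matching you propose.
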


See Appendix~\ref{Appendix_integrable} for further details and discussions on Lemma~\ref{Lem_RN Hermite}. 
We stress that $R_N^{(1)}$ indeed corresponds to the density of the elliptic GinUE (with $N \mapsto N-1$), restricted on the real axis, see \cite[Section 2.3]{BF22}.

\begin{ex}
For the extremal cases, we have the following.
\begin{itemize}
\item The GinOE case ($\tau=0$). It follows from 
\begin{equation} \label{Hermite asymp}
\Big( \frac{\tau}{2} \Big)^{k/2} H_k\Big( \frac{x}{ \sqrt{2\tau} } \Big) \to x^k, \qquad \tau \to 0
\end{equation}
that for $\tau=0$, we have
\begin{align}
	R_{N}^{(1)}(x)&:=\frac{ e^{ - x^2  }  }{ \sqrt{2\pi} }  \sum_{k=0}^{N-2} \frac{ x^{2k} }{k!}= \frac{1}{\sqrt{2\pi}} \frac{1}{(N-2)! } \Gamma(N-1,x^2) ,  \label{RN1 def tau0}
	\\
	\begin{split}
		R_{N}^{(2)}(x)&:= \frac{ 1 }{ (N-2)! }  \frac{ e^{ -\frac{x^2 }{2} } }{\sqrt{2\pi}} x^{N-1} 
	 \int_{0}^{x} e^{ -\frac{ u^2 }{2}  } u^{N-2}\,du=  \frac{ 2^{(N-3)/2} }{ (N-2)! }  \frac{ e^{ -\frac{x^2 }{2} } }{\sqrt{2\pi}} |x|^{N-1} \gamma\Big( \frac{N-1}{2}, \frac{x^2}{2} \Big).
  \label{RN2 def tau0}
	\end{split}
\end{align}
Here,
$$
\gamma(a,z) := \int_0^z e^{a-1} e^{-t}\,dt, \qquad \Gamma(a,z) :=  \int_z^\infty e^{a-1} e^{-t}\,dt =\Gamma(a)- \gamma(a,z)
$$
are lower and upper incomplete gamma functions.
Note that by \eqref{RN1 def tau0} and \eqref{RN2 def tau0}, the rescaled density $x \mapsto \sqrt{N}x$ is given by 
\begin{equation} \label{RN rescaled tau0}
R_N(\sqrt{N}x) = \frac{1}{\sqrt{2\pi}} \frac{1}{(N-2)! } \Gamma(N-1,Nx^2) +  \frac{  2^{(N-3)/2} }{ (N-2)! }  \frac{ e^{ -\frac{N x^2 }{2} } }{\sqrt{2\pi}} |\sqrt{N}x|^{N-1} \gamma\Big( \frac{N-1}{2}, \frac{Nx^2}{2} \Big) . 
\end{equation}
This formula appears in \cite[Corollary 4.3]{EKS94}. See \cite{ABES23} for a recent work on the use of this formula in the context of the counting statistics. 
\smallskip 
\item The GOE case ($\tau=1$). In this case, it is immediate to see that 
\begin{align}
\begin{split} \label{GOE density v1}
	R_{N}(x)& =\frac{ e^{ -\frac{ x^2 }{ 2 }  }  }{ \sqrt{2\pi} }  \sum_{k=0}^{N-2} \frac{1}{2^k\,k!} H_k\Big( \frac{x}{ \sqrt{2} } \Big)^2 
 + \frac{ 1 }{ 2^{N-1/2 } } \frac{ 1 }{ (N-2)! }  \frac{ e^{ -\frac{x^2 }{4} } }{\sqrt{2\pi}}  H_{N-1}\Big( \frac{x}{ \sqrt{2} } \Big) 
	 \int_{0}^{x} e^{ -\frac{ u^2 }{4}  } H_{N-2} \Big( \frac{u}{ \sqrt{2} } \Big)\,du.  
	\end{split}
\end{align}
This is the classical GOE density, see e.g. \cite[Section 6.4]{Fo10}. Notice here that the first line of \eqref{GOE density v1} coincides with the density of the GUE of size $N-1$.
We also stress that \eqref{GOE density v1} can be rewritten as  
\begin{align}
\begin{split} \label{GOE density v2}
	R_{N}(x) & =\frac{ e^{ -\frac{ x^2 }{ 2 }  }  }{ \sqrt{2\pi} }  \sum_{k=0}^{N-1} \frac{1}{2^k\,k!} H_k\Big( \frac{x}{ \sqrt{2} } \Big)^2 
 \\
 &\quad + \frac{ 1 }{ 2^{N+1/2 } } \frac{ 1 }{ (N-1)! }  \frac{ e^{ -\frac{x^2 }{4} } }{\sqrt{2\pi}}  H_{N-1}\Big( \frac{x}{ \sqrt{2} } \Big) 
	\bigg( \sqrt{\pi} \, 2^{N/2} (N-1)!!- \int_{x}^\infty e^{ -\frac{ u^2 }{4}  } H_{N-2} \Big( \frac{u}{ \sqrt{2} } \Big)\,du    \bigg).  
	\end{split}
\end{align}
We mention that the use of the formula \eqref{GOE density v2} was made in \cite{Le09}. 
Note also that the first line of \eqref{GOE density v2} corresponds to the density of the GUE of size $N$. 
We refer to \cite{AFNM00,Wi99} for a comprehensive discussion on such a relation between the correlation functions of unitary and orthogonal ensembles.
\end{itemize}
\end{ex}

We shall analyse the integrals of $R_N^{(1)}$ and $R_N^{(2)}$ separately.
For this purpose, let 
\begin{equation} \label{u v definition}
u(t):= \int_\R e^{ tx } \, R_N^{(1)}(x)\,dx, \qquad v(t):= \int_\R e^{ tx } \, R_N^{(2)}(x)\,dx. 
\end{equation}
Note that by \eqref{M Mp definition} and \eqref{density eGinOE}, we have 
\begin{equation} \label{M u v}
M(t) = \int_\R e^{tx} R_N(x)\, dx= u(t)+v(t). 
\end{equation}

We first derive alternative representations of $u(t),$ which allow us to perform further analysis.

\begin{prop}[\textbf{Representations of the MGF of the elliptic GinUE part}]\label{Prop_u v int rep}
For any even integer $N \ge 2$ and $\tau \in [0,1]$, we have the integral representation
\begin{equation} \label{u integral rep}
 u(t) = \frac{2 }{ 1+\tau }   \frac{ (\tau/2)^{N-\frac32 } }{(N-2)!}  \int_\R  \frac{e^{tx}}{t}\,  H_{N-2}\Big(\frac{x}{\sqrt{2\tau}} \Big) H_{N-1}\Big(\frac{x}{ \sqrt{2\tau} }\Big)  \frac{ e^{ -\frac{ x^2 }{ 1+\tau }  }  }{ \sqrt{2\pi} }  \, dx. 
\end{equation}
Furthermore, it can also be written as 
\begin{align}
\begin{split} \label{u discrete rep}
 u(t) = \sqrt{\frac{2 }{ 1+\tau }}   e^{ \frac{t^2(1+\tau)}{4} } 
\sum_{k=0}^{ N-2 } \frac{1 }{ k! } \binom{N-1}{k+1}  \,\tau^{N-k-2} \Big( \frac{\tau-1}{4}\Big)^{ k+\frac{1}{2} }  H_{2k+1} \Big( \frac{1+\tau}{2\sqrt{\tau-1}} \,t \Big) \frac{1}{t}. 
\end{split}
\end{align}
\end{prop}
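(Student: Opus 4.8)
The plan is to establish the two representations in Proposition~\ref{Prop_u v int rep} by direct manipulation of the defining integral $u(t)=\int_\R e^{tx}R_N^{(1)}(x)\,dx$, starting from the Hermite-polynomial expression \eqref{RN1 def}. For the integral representation \eqref{u integral rep}, the first step is to insert the sum $R_N^{(1)}(x)=\tfrac{1}{\sqrt{2\pi}}e^{-x^2/(1+\tau)}\sum_{k=0}^{N-2}\tfrac{(\tau/2)^k}{k!}H_k(x/\sqrt{2\tau})^2$ and recognize the square $H_k^2$ as something that telescopes. The key identity I would use is the Christoffel--Darboux-type summation for Hermite polynomials: $\sum_{k=0}^{n-1}\tfrac{1}{2^k k!}H_k(y)^2$ can be written, after multiplication by the appropriate Gaussian weight, as a derivative of $H_{n-1}(y)H_{n}(y)$ (this is the ``diagonal'' Christoffel--Darboux formula, using $H_n'=2nH_{n-1}$ and the three-term recurrence). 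Concretely, one checks that $\tfrac{d}{dx}\big(e^{-x^2/(1+\tau)}H_{N-2}H_{N-1}\big)$ reproduces, up to constants depending on $\tau$, the weighted sum $e^{-x^2/(1+\tau)}\sum_k (\tau/2)^k H_k^2/k!$ — this is exactly where the factor $(\tau/2)^{N-3/2}/(N-2)!$ and the $1/(1+\tau)$ come from. Then integrating $\int e^{tx}R_N^{(1)}(x)\,dx$ by parts once, moving the derivative off the product $H_{N-2}H_{N-1}$ and onto $e^{tx}$, pulls out a factor $1/t$ and yields \eqref{u integral rep}; the boundary terms vanish because of the Gaussian decay.

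For the discrete representation \eqref{u discrete rep}, the plan is to evaluate the Gaussian integral in \eqref{u integral rep} explicitly. After the change of variables $x=\sqrt{2\tau}\,y$, one is left with $\int_\R e^{\sqrt{2\tau}\,ty}H_{N-2}(y)H_{N-1}(y)e^{-2\tau y^2/(1+\tau)}\,dy$. The product $H_{N-2}(y)H_{N-1}(y)$ should be expanded via the linearization (product) formula for Hermite polynomials, $H_m(y)H_n(y)=\sum_{j}\binom{m}{j}\binom{n}{j}2^j j!\,H_{m+n-2j}(y)$, which for $m=N-2$, $n=N-1$ produces terms $H_{2N-3-2j}(y)$ with odd index $2k+1$ where $k=N-2-j$. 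Each resulting integral $\int_\R e^{sy}H_{2k+1}(y)e^{-cy^2}\,dy$ is a standard Gaussian--Hermite integral that evaluates to an expression of the form (Gaussian factor)$\times H_{2k+1}(\text{linear in }s)$; tracking the Gaussian weight $e^{-2\tau y^2/(1+\tau)}$ rather than the standard $e^{-y^2}$ is what generates the $e^{t^2(1+\tau)/4}$ prefactor and the argument $\tfrac{(1+\tau)}{2\sqrt{\tau-1}}t$ together with the powers $\big(\tfrac{\tau-1}{4}\big)^{k+1/2}$. Reindexing $j\mapsto k=N-2-j$ and simplifying the binomial/factorial coefficients $\binom{N-2}{j}\binom{N-1}{j}2^j j!$ into $\tfrac{1}{k!}\binom{N-1}{k+1}$ (times powers of $\tau$) gives \eqref{u discrete rep}.

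The main obstacle I expect is bookkeeping rather than conceptual: correctly matching all the $\tau$-dependent constants, signs, and half-integer powers of $(\tau-1)$ — note that $\sqrt{\tau-1}$ is imaginary for $\tau\in[0,1)$, so one must be careful that the final expression is real (the imaginary units from $(\tfrac{\tau-1}{4})^{k+1/2}$ cancel against those hidden in $H_{2k+1}$ of an imaginary-type argument, since $H_{2k+1}$ is an odd polynomial). A secondary technical point is justifying the Christoffel--Darboux step for the \emph{elliptic} weight $e^{-x^2/(1+\tau)}$, which is not the natural weight $e^{-y^2}$ of the Hermite polynomials appearing inside; the cleanest route is to verify the differentiation identity $\tfrac{d}{dx}\!\big(e^{-x^2/(1+\tau)}H_{N-2}(x/\sqrt{2\tau})H_{N-1}(x/\sqrt{2\tau})\big)$ directly using $H_n' = 2nH_{n-1}$ and $H_{n+1}=2yH_n-2nH_{n-1}$, reducing everything to a polynomial identity in $x$ that can be checked term by term. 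Once \eqref{u integral rep} is in hand, \eqref{u discrete rep} is a matter of applying the product formula and the elementary Gaussian integral $\int_\R e^{sy-cy^2}H_n(y)\,dy$, and collecting terms.
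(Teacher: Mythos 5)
There is a genuine gap in your route to \eqref{u integral rep}: the Christoffel--Darboux--type identity you invoke is stated in the wrong direction, and as stated it is false. You claim that $\frac{d}{dx}\big(e^{-x^2/(1+\tau)}H_{N-2}(x/\sqrt{2\tau})H_{N-1}(x/\sqrt{2\tau})\big)$ reproduces, up to a constant, the weighted sum $e^{-x^2/(1+\tau)}\sum_{k=0}^{N-2}(\tau/2)^k H_k(x/\sqrt{2\tau})^2/k!$. Already for $\tau=1$, $N=2$ the left-hand side is $\sqrt{2}\,e^{-x^2/2}(1-x^2)$ while the right-hand side is $e^{-x^2/2}$, so no constant works, and the ``check the derivative of the product term by term'' verification you propose would simply fail. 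The correct statement (the generalised Christoffel--Darboux formula of \cite{LR16}, \cite[Lemmas 4.2 and 4.3]{BKLL23}, which is exactly what the paper uses) goes the other way: it is the derivative of the density that is proportional to the product,
\[
\frac{d}{dx}R_N^{(1)}(x) \;=\; -\frac{2}{1+\tau}\,\frac{(\tau/2)^{N-\frac32}}{(N-2)!}\,\frac{e^{-\frac{x^2}{1+\tau}}}{\sqrt{2\pi}}\,H_{N-2}\Big(\frac{x}{\sqrt{2\tau}}\Big)H_{N-1}\Big(\frac{x}{\sqrt{2\tau}}\Big),
\]
equivalently the tail-integral representation \eqref{RN1 integral form}; moreover its proof requires telescoping the whole sum over $k$ (each summand contributes $T_{k-1}-T_k$ with $T_k\propto(\tau/2)^k H_kH_{k+1}/k!$), not differentiating the single top product. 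Your integration-by-parts bookkeeping betrays the same reversal: moving a derivative onto $e^{tx}$ produces a factor $t$, not $1/t$; the $1/t$ in \eqref{u integral rep} arises only with the correct orientation, namely integrating $e^{tx}$ to $e^{tx}/t$ while differentiating $R_N^{(1)}$ (or, as in the paper, substituting \eqref{RN1 integral form} and integrating by parts). So the first, and main, step of your argument must be repaired by flipping both the identity and the direction of the integration by parts.

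The second half of your plan is sound and differs from the paper only in that you rederive the needed Hermite--Gaussian integral rather than cite it: the paper invokes $\int_\R e^{-(x-y)^2}H_m(\alpha x)H_n(\alpha x)\,dx$ from \cite[(7.374-9)]{GR14}, whereas you linearize $H_{N-2}H_{N-1}$ and integrate each $H_{2k+1}$ against a shifted Gaussian, which amounts to proving that formula from scratch. The coefficient bookkeeping does work out: with $j=N-2-k$ one has $\binom{N-2}{j}\binom{N-1}{j}2^jj!=\frac{(N-2)!}{k!}\binom{N-1}{k+1}2^{N-2-k}$, consistent with \eqref{u discrete rep}, and your observation that the imaginary parts of $(\frac{\tau-1}{4})^{k+\frac12}$ and of $H_{2k+1}$ at an imaginary-type argument cancel (since $H_{2k+1}$ is odd) is correct. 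That part goes through once \eqref{u integral rep} has been established by the corrected argument.
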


\begin{proof}
We also mention that by \cite[Lemma 4.3]{BKLL23}, we have 
\begin{align}
	\begin{split} \label{RN1 integral form}
		R_{N}^{(1)}(x) & = R_{N}^{(1)}(0) - \sqrt{\frac{2}{\pi}} \frac{(\tau/2)^{N-\frac32}}{1+\tau} \frac{1}{(N-2)!}
 \int_{0}^{x} e^{ -\frac{ u^2 }{ 1+\tau }  } H_{N-2}\Big(  \frac{u}{ \sqrt{2\tau} } \Big) H_{N-1}\Big(  \frac{u}{ \sqrt{2\tau} } \Big)\,du
 \\
 &= \sqrt{\frac{2}{\pi}} \frac{(\tau/2)^{N-\frac32}}{1+\tau} \frac{1}{(N-2)!}
 \int_{x}^\infty e^{ -\frac{ u^2 }{ 1+\tau }  } H_{N-2}\Big(  \frac{u}{ \sqrt{2\tau} } \Big) H_{N-1}\Big(  \frac{u}{ \sqrt{2\tau} } \Big)\,du. 
	\end{split}
\end{align}
Using this and integration by parts, we obtain \eqref{u integral rep}. 
The second expression \eqref{u discrete rep} follows from \eqref{u integral rep} and the integral
\begin{align}
\begin{split}
&\quad \int_\R e^{-(x-y)^2} H_m(\alpha x) H_n(\alpha x) \,dx  = \sqrt{\pi} \sum_{k=0}^{ \min \{ m,n\} } 2^k \,k! \binom{m}{k} \binom{n}{k} (1-\alpha^2)^{ \frac{m+n}{2}-k }  H_{m+n-2k} \Big( \frac{\alpha y}{ \sqrt{1-\alpha^2}  } \Big)
\end{split}
\end{align}
that can be found in \cite[(7.374-9)]{GR14}. 
\end{proof}

We mention that the formula \eqref{RN1 integral form} plays an important role in the asymptotic analysis of the elliptic Ginibre ensembles, see \cite{LR16,BE22,BES23}. 
Indeed, the discrete representation \eqref{u discrete rep} will not play a role in the further analysis. 
Nonetheless, it can also provide an alternative proof for Proposition~\ref{Prop_u ODE} below. 
Furthermore, we stress that the expression \eqref{u discrete rep} is particularly useful for numerical verifications.

Next, we derive a differential equation for $u(t)$, which plays a key role in Theorem~\ref{Thm_M ODE}. 
For this, we shall make use of the integral representation \eqref{u integral rep}. 

\begin{prop}[\textbf{Differential equation for the MGF of the elliptic GinUE part}] \label{Prop_u ODE}
For any even integer $N \ge 2$ and $\tau \in [0,1]$, the function $u(t)$ satisfies the third-order differential equation:
\begin{equation} \label{u ODE}
a_3(t)\,u'''(t)+ a_2(t) \,u''(t)+a_1(t)\, u'(t)+ a_0(t) \,u(t)=0, 
\end{equation}
where 
\begin{align}
\begin{split}
a_3 (t)&= \frac{1-\tau}{2} ,
\qquad 
a_2(t)  = -\frac{ (1-4\tau+\tau^2)(1+\tau) }{ 4 }\,t+ \frac{1-\tau}{t},
\\
a_1(t) &= -(1+\tau) \bigg[ \frac{3\tau(1-\tau^2)}{8}\, t^2+ \frac{1-\tau^2}{2}(N-1) + \frac{1-5\tau+\tau^2}{2}     \bigg] - \frac{1-\tau}{t^2}  ,
\\
a_0(t) &= -\bigg[ \frac{\tau^2(1+\tau)}{8} \,t^2 + \frac{ \tau (1+\tau)}{2}  \,(N-1) + \frac{5\tau(1-\tau)}{8}     \bigg] (1+\tau)^2\,t. 
\end{split}
\end{align}
\end{prop}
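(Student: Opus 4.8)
The plan is to start from the integral representation \eqref{u integral rep}, namely
\[
u(t) = \frac{2}{1+\tau}\frac{(\tau/2)^{N-3/2}}{(N-2)!}\,\frac{1}{t}\int_\R e^{tx}\, H_{N-2}\Big(\frac{x}{\sqrt{2\tau}}\Big)H_{N-1}\Big(\frac{x}{\sqrt{2\tau}}\Big)\,\frac{e^{-x^2/(1+\tau)}}{\sqrt{2\pi}}\,dx,
\]
and to extract a differential equation by finding a first-order linear ODE (in $x$) satisfied by the integrand $\Phi(x):=H_{N-2}(x/\sqrt{2\tau})H_{N-1}(x/\sqrt{2\tau})e^{-x^2/(1+\tau)}$, together with the fact that $e^{tx}$ is an eigenfunction of $\partial_x$. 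Concretely, writing $\psi(x):=H_{N-2}(x/\sqrt{2\tau})H_{N-1}(x/\sqrt{2\tau})$, one uses the Hermite ODE and the three-term recurrence $H_{k+1}(y)=2yH_k(y)-2kH_{k-1}(y)$, $H_k'(y)=2kH_{k-1}(y)$ to show that $\psi$ satisfies a third-order linear ODE with polynomial coefficients in $x$ (a product of two Hermite functions of consecutive degree satisfies a third-order ODE). Multiplying through by $e^{-x^2/(1+\tau)}$ then produces a third-order ODE $\sum_{j=0}^3 p_j(x)\,\partial_x^j \Phi(x)=0$ with polynomial $p_j$.

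Next I would insert $e^{tx}$ and integrate: since $\int_\R e^{tx}\,p_j(x)\,\partial_x^j\Phi(x)\,dx = \int_\R \partial_x^j\!\big(p_j(x)e^{tx}\big)\Phi(x)\,dx$ after integration by parts (boundary terms vanish because of the Gaussian decay of $\Phi$), and $\partial_x(e^{tx}q(x)) = e^{tx}(tq(x)+q'(x))$, each multiplication by $x$ in $p_j$ becomes $\partial_t$ acting on $\tilde u(t):=\int_\R e^{tx}\Phi(x)\,dx = \frac{1+\tau}{2}\frac{(N-2)!}{(\tau/2)^{N-3/2}}\,t\,u(t)$, and each $\partial_x$ becomes multiplication by $t$. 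This converts the $x$-ODE for $\Phi$ into a linear ODE with polynomial-in-$t$ coefficients for $\tilde u(t)$, hence for $t\,u(t)$, and finally, after clearing the factor $t$ and simplifying, a third-order ODE for $u(t)$ itself. The order is preserved (three) because the symbol in $\partial_t$ comes from the degree of the polynomial coefficients $p_j(x)$, and one checks that the top coefficient $a_3(t)$ is the nonzero constant $\tfrac{1-\tau}{2}$. The remaining task is the bookkeeping: carefully tracking the powers of $\tau$, the shifts $N\mapsto N-1$, and the $1/t$, $1/t^2$ terms that survive from the factor $t$ in $u(t)=\tilde u(t)/(ct)$, to land precisely on the stated $a_2,a_1,a_0$.

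The main obstacle is the first step: deriving the correct third-order ODE in $x$ for the product $H_{N-2}(x/\sqrt{2\tau})H_{N-1}(x/\sqrt{2\tau})e^{-x^2/(1+\tau)}$ with the \emph{exact} polynomial coefficients, because the two Hermite factors have different degrees and the Gaussian weight $e^{-x^2/(1+\tau)}$ has a ``wrong'' variance relative to the natural weight $e^{-x^2/(2\tau)}$ of $H_k(x/\sqrt{2\tau})$ — this mismatch is exactly why the non-Hermiticity parameter $\tau$ enters nontrivially and why one gets genuine $\tau$-dependence rather than the classical Christoffel--Darboux collapse. One clean way to handle it is to set $\psi_\pm := H_{N-2}\big(\tfrac{x}{\sqrt{2\tau}}\big)H_{N-1}\big(\tfrac{x}{\sqrt{2\tau}}\big) e^{-x^2/(1+\tau)}$ and compute $\psi'$ in terms of $\psi$ and an auxiliary function $H_{N-2}^2 e^{-\cdots}$ and $H_{N-1}^2 e^{-\cdots}$ using the recurrences, obtaining a first-order $3\times 3$ linear system in these three functions with rational coefficients; the characteristic polynomial of the associated operator then yields the scalar third-order ODE, and symmetry/parity considerations ($\psi$ is odd in $x$ after stripping the weight appropriately) reduce the amount of computation. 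Once \eqref{u ODE} is in hand, everything downstream is the mechanical integration-by-parts transfer described above.
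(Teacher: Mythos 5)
Your overall mechanism --- find a linear ODE in $x$ for the integrand of \eqref{u integral rep} and push it through the Laplace transform --- is the same one the paper uses, but the route you choose breaks down at the transfer step because of how the order is counted. In your scheme (derivatives moved onto $p_j(x)e^{tx}$, then $x^m\mapsto\partial_t^m$), the order of the resulting $t$-equation is \emph{not} the order of the $x$-equation but the maximal $x$-degree of its polynomial coefficients, and for the mixed product this degree is too large. Concretely, writing $g(y)=H_{N-2}(y)H_{N-1}(y)$ and using $H_{N-1}'=2(N-1)H_{N-2}$ together with the symmetric-square equation for $H_{N-1}^2$, the third-order equation you would obtain is
\begin{equation*}
y\,g'''(y)-(6y^2+1)\,g''(y)+\bigl(8y^3+(8N-10)y\bigr)g'(y)-\bigl((16N-24)y^2+8N-10\bigr)g(y)=0,
\end{equation*}
whose coefficients already reach degree $3$ while the leading coefficient is only $y$; incorporating the Gaussian $e^{-x^2/(1+\tau)}$ (equivalently, keeping it with the integrand and converting each $\partial_x$ into $\tfrac{2}{1+\tau}\partial_t-t$, as in the paper) raises the relevant count to $4$. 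So your procedure lands on a \emph{fourth}-order equation in $t$ for $t\,u(t)$, and the sentence ``the order is preserved (three) \dots one checks that the top coefficient is $\tfrac{1-\tau}{2}$'' is exactly the step that fails. This is not an artifact of a suboptimal choice of $x$-equation: the third-order annihilator of $\rho(t)=t\,u(t)$ obtained at the end of the paper's proof has a degree-$4$ coefficient in front of $\rho$ (for generic $\tau$), and since a primitive minimal-order annihilator is unique up to scalars, Laplace duality ($\partial_t\leftrightarrow x$, $t\leftrightarrow\partial_x$) shows that \emph{no} third-order $x$-equation for the weighted integrand with coefficients of degree $\le 3$ can exist; hence any third-order $x$-equation necessarily transfers to order $\ge 4$ in $t$.

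This is precisely why the paper does not attack the mixed product head-on. Its key lemma (Lemma~\ref{Lem_ODE for sigma}) derives the third-order $t$-equation for $\sigma(t)$, the transform \eqref{sigma definition} of $H_{N-1}^2$, whose $x$-equation has exactly the right bidegree so that the transfer stays at order three; the mixed-product transform $\rho=t\,u$ is then reached through the first-order ladder relation $\widetilde{\rho}(t)=2\sigma'(t)-(1+\tau)t\,\sigma(t)$ of \eqref{rho tilde sigma} (a consequence of $H_{N-1}'=2(N-1)H_{N-2}$ and Gaussian integration by parts), and $\sigma$, $\sigma'$ are eliminated between this relation, its derivative, and the $\sigma$-equation. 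Your argument could be repaired along similar lines --- either accept the fourth-order $t$-equation and perform one further elimination, or aim from the start at the fourth-order $x$-equation with cubic coefficients that is Laplace-dual to \eqref{u ODE} --- but as written the final ``bookkeeping'' cannot produce the stated $a_3,\dots,a_0$. Two smaller points: a product of solutions of two \emph{different} second-order equations satisfies a fourth-order equation in general, so your third-order claim for $H_{N-2}H_{N-1}$ genuinely needs the consecutive-degree ladder structure (your $3\times3$ system does use it implicitly); and the repeated integration by parts should carry the factor $(-1)^j$.
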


\begin{rem}[\textbf{MGF of the GUE}]
Recall that the confluent hypergeometric functions $M(a,b,z)$ and $U(a,b,z)$ solve the Kummer's differential equation
\begin{equation} \label{Kummer ODE}
z \frac{ d^2w }{ dz^2 } +(b-z) \frac{dw}{dz} -a w=0,
\end{equation}
see e.g. \cite[Chapter 13]{NIST}.
Then it follows from \eqref{Hermite asymp} that if $\tau=1$, the expression \eqref{u discrete rep} reduces to 
\begin{align}
\begin{split} \label{u tau1}
 u(t) 
&  =   e^{ \frac{t^2}{2} }  \sum_{k=0}^{ N-2 }  \frac{1}{k!} \binom{N-1}{k+1}  \, t ^{ 2k } 
 = (N-1) e^{ \frac{t^2}{2} }  M(2-N,2,-t^2)  =\frac{ 1 }{(N-2)!}  e^{ \frac{t^2}{2} }  U(2-N,2,-t^2). 
\end{split}
\end{align}
We mention that the last expressions in \eqref{u tau1} were used in \cite{HT03}.
On the one hand, for $\tau=1$, the linear coefficients $a_k$ in Proposition~\ref{Prop_u ODE} are simplified as
\begin{equation}
a_3(t)=0, \qquad a_2(t)=t,\qquad a_1(t)=3,\qquad a_0(t)=-t\Big(t^2+4(N-1)\Big). 
\end{equation}
Then the resulting differential equation coincides with the differential equation derived in \cite[Eq. (18)]{Le09} with $N \mapsto N-1$, cf. \eqref{GOE density v1} and \eqref{GOE density v2}. 
One can also use \eqref{Kummer ODE} and \eqref{u tau1} to directly verify the differential equation \eqref{u ODE} for $\tau=1$.  
\end{rem}

We now formulate a differential equation that links the MGF $M(t)$ of the elliptic GinOE with the MGF $u(t)$ of the elliptic GinUE, restricted to the real axis.

\begin{prop}[\textbf{Differential equation for the mixed MGFs}] \label{Prop_ODE M u}
For any even integer $N \ge 2$ and $\tau \in [0,1]$, we have 
\begin{align}
\begin{split} \label{M u mixed ODE}
&\quad \frac{1-\tau}{2} M'''(t) -\frac{(1+\tau)(\tau^2-3\tau+1)}{2} \,t\,M''(t) 
 - \frac{1-\tau^2}{2} \Big( 2\tau(1+\tau) \, t^2 +(N-1)(1+\tau)+2-\tau \Big) M'(t)  
\\
&\quad -\frac{\tau(1+\tau)^2}{2} \Big( \tau(1+\tau) \,t^2+(N-1)(1+\tau)+3-2\tau \Big) \,t\,M(t)
\\
&=\frac{1-\tau}{2}u'''(t) -\frac{(1+\tau)(1-4\tau+\tau^2)}{4} \,t\, u''(t)
- (1+\tau)\Big( \frac{3\tau(1-\tau^2)}{8} \, t^2 +\frac{1-\tau^2}{2} (N-1)+\frac{1-\tau}{2} \Big) u'(t)
\\
&\quad  -\tau(1+\tau)^2 \Big( \frac{\tau(1+\tau)}{8} \,t^2+\frac{1+\tau}2 (N-1)+\frac{5+\tau}{8} \Big) \,t\,u(t). 
\end{split}
\end{align} 
\end{prop}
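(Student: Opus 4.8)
\textbf{Proof proposal for Proposition~\ref{Prop_ODE M u}.}
The plan is to work directly with the Hermite-polynomial representation of $R_N$ in Lemma~\ref{Lem_RN Hermite} and to track how each of $u(t)$ and $v(t)$ transforms under differentiation in $t$, since $M=u+v$ by \eqref{M u v}. The key observation is that $R_N^{(2)}$ in \eqref{RN2 def} has the structure (Gaussian)$\times$(Hermite polynomial)$\times$(an antiderivative of another Gaussian-times-Hermite), and $R_N^{(1)}$ in \eqref{RN1 def} is a sum of squares of the same building blocks; moreover by \eqref{RN1 integral form} the \emph{derivative} $\tfrac{d}{dx}R_N^{(1)}$ is itself $-\sqrt{2/\pi}\,\tfrac{(\tau/2)^{N-3/2}}{(1+\tau)(N-2)!}\,e^{-x^2/(1+\tau)}H_{N-2}H_{N-1}$, i.e.\ exactly the integrand that also appears inside $R_N^{(2)}$. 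This is what makes a \emph{closed} relation between $M$ and $u$ possible, rather than one that drags in further unknown functions.

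The concrete steps I would carry out are as follows. First, set $w(x):=e^{-x^2/(1+\tau)}H_{N-2}(x/\sqrt{2\tau})H_{N-1}(x/\sqrt{2\tau})/\sqrt{2\pi}$, so that $u(t)=\tfrac{2}{1+\tau}\tfrac{(\tau/2)^{N-3/2}}{(N-2)!}\int_\R \tfrac{e^{tx}}{t}w(x)\,dx$ by \eqref{u integral rep}, and $R_N^{(2)}(x)=\tfrac{(\tau/2)^{N-3/2}}{1+\tau}\tfrac{1}{(N-2)!}\,e^{-x^2/(2(1+\tau))}H_{N-1}(x/\sqrt{2\tau})\int_0^x e^{-s^2/(2(1+\tau))}H_{N-2}(s/\sqrt{2\tau})\,ds$. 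Second, using the three-term recurrence $H_{m+1}(y)=2yH_m(y)-2mH_{m-1}(y)$ and the derivative rule $H_m'(y)=2mH_{m-1}(y)$ for Hermite polynomials, I would derive a first-order linear ODE in $x$ (with polynomial coefficients in $x$) satisfied by the scalar quantity $h(x):=e^{-x^2/(2(1+\tau))}H_{N-1}(x/\sqrt{2\tau})$, and similarly identify $\tfrac{d}{dx}\big[e^{-x^2/(2(1+\tau))}\int_0^x e^{-s^2/(2(1+\tau))}H_{N-2}\big]$-type combinations; the point is that $v$ and its $t$-derivatives are Laplace transforms of $x$-polynomial combinations of $h(x)$, $h'(x)$ and the antiderivative factor, while $u$ and its derivatives are Laplace transforms of $x$-polynomial combinations of $w(x)$ and $w'(x)$ (noting $w(x)=\tfrac{1+\tau}{-\sqrt{2/\pi}}\cdot\tfrac{(N-2)!}{(\tau/2)^{N-3/2}}\,(R_N^{(1)})'(x)$ from \eqref{RN1 integral form}). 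Third, one computes enough $t$-derivatives of $M$ and of $u$ — up to third order, which is what the claimed identity involves — transferring every multiplication by $x$ under the integral sign into a differentiation in $t$ via $x\,e^{tx}=\partial_t e^{tx}$, and every $\partial_x$ hitting the integrand into a boundary term plus $-t$ times the transform (integration by parts); then one solves the resulting finite linear system to eliminate the Laplace transforms of the ``building block'' functions, leaving precisely \eqref{M u mixed ODE}. Throughout, all boundary terms at $x\to\pm\infty$ vanish because of the Gaussian factors, and the apparent poles in $t$ (the $1/t$ in \eqref{u integral rep}, and the $1/t$, $1/t^2$ in the coefficients $a_k$ of Proposition~\ref{Prop_u ODE}) cancel after combining, consistent with $M$, $u$ being entire.

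An alternative, and probably cleaner, route is to bypass the $x$-integration entirely: Proposition~\ref{Prop_u ODE} already gives a third-order ODE $a_3u'''+a_2u''+a_1u'+a_0u=0$ for $u$, so it suffices to produce \emph{independently} a differential relation expressing the left-hand side of \eqref{M u mixed ODE} — call it $\mathcal{L}[M]$ — in terms of $u$ and its derivatives, and then check the right-hand side equals the combination $\tfrac{1-\tau}{2}u'''+\cdots$ modulo the relation \eqref{u ODE}; in fact the right-hand side of \eqref{M u mixed ODE} is visibly $a_3u'''+\big(a_2-\tfrac{1-\tau}{t}\big)t\cdots$-type, i.e.\ it is $2a_3\cdot\tfrac12 u'''$ plus lower terms, and one should recognize it as $\mathcal{L}$ applied to $u$ after correcting for the discrepancy between the $M$-side and $u$-side coefficients. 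So the real content is a differential identity for $v=M-u$: differentiating $v$ three times using the product structure of $R_N^{(2)}$ and the Hermite recurrences, one gets $\mathcal{L}[v]=$ (an explicit polynomial combination of $u$, $u'$, $u''$), and \eqref{M u mixed ODE} is then $\mathcal{L}[u]+\mathcal{L}[v]=\text{RHS}$. I expect the main obstacle to be purely bookkeeping: keeping the Hermite index shifts ($N-1$ versus $N-2$ versus $N-3$) and the numerous $\tau$-dependent coefficients straight while reducing everything to a common set of generators, and verifying that the spurious negative powers of $t$ genuinely cancel. There is no conceptual difficulty once the dictionary ``multiplication by $x$ $\leftrightarrow$ $\partial_t$, integration by parts $\leftrightarrow$ $-t$'' is set up, but the algebra is heavy and is best organized by first writing $M^{(k)}(t)$ and $u^{(k)}(t)$ ($k=0,1,2,3$) each as a Laplace transform against an explicit $x$-polynomial combination of a fixed short list of functions, and then doing linear algebra over the field of rational functions in $t$ and $\tau$.
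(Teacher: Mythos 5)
Your plan is essentially the paper's own proof: the paper likewise reduces \eqref{M u mixed ODE}, via $M=u+v$, to an identity expressing a third-order operator applied to $v$ in terms of $u,u',u''$ (Proposition~\ref{Prop_v u ODE}), which it proves by differentiating the Laplace transform of $R_N^{(2)}$, applying the Gaussian and Ornstein--Uhlenbeck integration-by-parts formulas \eqref{Gaussian IBP}--\eqref{Gaussian IBP 2} together with the Hermite identities $H_k'=2kH_{k-1}$ and $H_k''-2xH_k'=-2kH_k$, and eliminating the auxiliary Hermite integrals through the relation $\rho(t)=t\,u(t)$ from \eqref{u integral rep}, i.e.\ exactly the dictionary ``$x\leftrightarrow\partial_t$, integration by parts $\leftrightarrow -t$'' and the structural fact about $(R_N^{(1)})'$ in \eqref{RN1 integral form} that you single out. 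The computations you defer as bookkeeping are precisely the paper's ``long but straightforward'' manipulations, so the approach is the same.
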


The differential equation in Proposition~\ref{Prop_ODE M u} can be used to derive a recurrence relation for the mixed moments of the elliptic GinOE and elliptic GinUE. 
However, it is a bit ambiguous, especially regarding the ``real moment'' of the elliptic GinUE. 
Nonetheless, in the Hermitian limit $\tau=1$, the statistical interpretation of the differential equation \eqref{M u mixed ODE} becomes clear, cf. \cite[Proposition 4]{Le09}. 
In particular, it gives rise to the recurrence relation 
\begin{align}
\begin{split} \label{M GOE GUE mixed}
 M_{2p}^{ \rm GOE } &= (4N-2) M_{2p-2}^{ \rm GOE } + 4(2p-2)(2p-3) M_{2p-4}^{ \rm GOE } 
 \\
 &\quad + M_{2p}^{ \rm GUE } -(4N-3) M_{2p-2}^{ \rm GUE } -(2p-2)(2p-3) M_{2p-4}^{ \rm GUE }
\end{split}
\end{align}
of the mixed spectral moments of the GOE and GUE established in \cite[Theorem 3]{Le09}. 

We now define
\begin{align}
\begin{split} \label{V definition}
V(t)&:= \frac{1-\tau}{2} M'''(t) -\frac{(1+\tau)(\tau^2-3\tau+1)}{2} \,t\,M''(t) 
\\
&\quad - \frac{1-\tau^2}{2} \Big( 2\tau(1+\tau) \, t^2 +(N-1)(1+\tau)+2-\tau \Big) M'(t)  
\\
&\quad -\frac{\tau(1+\tau)^2}{2} \Big( \tau(1+\tau) \,t^2+(N-1)(1+\tau)+3-2\tau \Big) \,t\,M(t).
\end{split}
\end{align}
Note that this is the left-hand side of the equation \eqref{M u mixed ODE}. 
In the following lemma, we relate the functions $u(t)$ and $V(t)$.

\begin{lem}[\textbf{Expression of $u$ in terms of $V$}] \label{Lem_u and V}
For any even integer $N \ge 2$ and $\tau \in [0,1]$, we have 
\begin{equation} \label{u abc V}
u(t)=\alpha(t) V(t)+\beta(t) V'(t) +\gamma(t) V''(t) , 
\end{equation}
where 
\begin{equation}
\alpha(t)= \frac{ a(t) }{ \delta(t) }, \qquad 
\beta(t)= \frac{ b(t) }{ \delta(t) }, \qquad 
\gamma(t)= \frac{ c(t) }{ \delta(t) }, \qquad 
\delta(t):= -N \tau^2 (1 + \tau)^5 \,d(t).
\end{equation}
Here $a(t),b(t),c(t)$ and $d(t)$ are given by \eqref{a(t) definition}, \eqref{b(t) definition}, \eqref{c(t) definition} and \eqref{d(t) definition}.
\end{lem}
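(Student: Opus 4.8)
The plan is to invert the differential relation between $M(t)$ and $u(t)$ provided by Proposition~\ref{Prop_ODE M u}. Writing the left-hand side of \eqref{M u mixed ODE} as $V(t)$ by definition \eqref{V definition}, that proposition says exactly that $V(t) = \mathcal{L}[u](t)$, where $\mathcal{L}$ is the third-order linear differential operator whose coefficients are the right-hand side of \eqref{M u mixed ODE}, namely
$\mathcal{L} = \tfrac{1-\tau}{2}\,\partial_t^3 - \tfrac{(1+\tau)(1-4\tau+\tau^2)}{4}\,t\,\partial_t^2 - (1+\tau)\big(\tfrac{3\tau(1-\tau^2)}{8}t^2 + \tfrac{1-\tau^2}{2}(N-1) + \tfrac{1-\tau}{2}\big)\partial_t - \tau(1+\tau)^2\big(\tfrac{\tau(1+\tau)}{8}t^2 + \tfrac{1+\tau}{2}(N-1) + \tfrac{5+\tau}{8}\big)t$. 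I observe that this is precisely the operator appearing in the differential equation \eqref{u ODE} for $u$ from Proposition~\ref{Prop_u ODE}, up to multiplication by $t$: comparing coefficients, $\mathcal{L} = \tfrac{1}{t}\big(a_3(t)\partial_t^3 + \tilde a_2(t)\partial_t^2 + \tilde a_1(t)\partial_t + \tilde a_0(t)\big)$ for suitable polynomial multiples — so in particular $\mathcal{L}[u] = V$ is a genuine (inhomogeneous-looking but really homogeneous) consequence, and the content of the lemma is that the relation can be \emph{algebraically inverted} to express $u$ through $V, V', V''$ with rational coefficients.

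The key step is the following. Since $u$ satisfies the third-order ODE \eqref{u ODE}, the functions $u, u', u''$ span the solution-relevant data, and $u'''$ (hence all higher derivatives) can be rewritten as a rational-coefficient combination of $u, u', u''$. Applying $\mathcal{L}$ to $u$ therefore produces $V = p_0(t) u + p_1(t) u' + p_2(t) u''$ for explicit rational functions $p_0, p_1, p_2$ obtained by reducing $\mathcal{L}[u]$ modulo \eqref{u ODE}. Differentiating this twice and again reducing $u''', u^{(4)}, \dots$ modulo \eqref{u ODE} each time, I get $V', V''$ as further rational-coefficient combinations of $u, u', u''$. This yields a $3\times 3$ linear system
\[
\begin{pmatrix} V \\ V' \\ V'' \end{pmatrix}
= P(t) \begin{pmatrix} u \\ u' \\ u'' \end{pmatrix},
\qquad P(t) \in \mathrm{Mat}_{3\times 3}\big(\mathbb{R}(t)\big),
\]
and the lemma asserts precisely that the top row of $P(t)^{-1}$ is $(\alpha(t), \beta(t), \gamma(t))$. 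So the proof reduces to: (i) compute $P(t)$ by the reduction-mod-\eqref{u ODE} procedure just described; (ii) check that $\det P(t)$ is a nonzero rational function — this is where the factor $\delta(t) = -N\tau^2(1+\tau)^5 d(t)$ enters as the common denominator, and one must verify $d(t) \not\equiv 0$ for $\tau\in[0,1]$, which is clear from \eqref{d(t) definition} (its $t^5$-coefficient $\tau(1+\tau)(2+\tau)(1+2\tau)$ vanishes only at $\tau=0$, and there the $t^3$-coefficient $-2(4+2N)$ is nonzero); (iii) invert and read off that the first row of $P^{-1}$ equals $\big(a(t)/\delta(t),\, b(t)/\delta(t),\, c(t)/\delta(t)\big)$ with $a,b,c$ as in \eqref{a(t) definition}--\eqref{c(t) definition}.

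The main obstacle is purely computational bookkeeping: carrying out the two differentiations of $V = p_0 u + p_1 u' + p_2 u''$ and the repeated substitution $u''' = -(a_2 u'' + a_1 u' + a_0 u)/a_3$ (valid wherever $a_3 = \tfrac{1-\tau}{2}\neq 0$, i.e. for $\tau<1$; the boundary case $\tau=1$ is handled separately since there $a_3\equiv 0$ and the order drops, and one checks \eqref{u abc V} directly against the explicit forms in Example~\ref{Example_Bk tau01} / \eqref{u tau1}) produces large polynomial expressions whose simplification into the compact forms $a(t), b(t), c(t), d(t)$ requires care. The cleanest way to present this is to state the matrix $P(t)$ (or equivalently $V, V', V''$ in terms of $u, u', u''$) explicitly, then verify \eqref{u abc V} by multiplying out $\alpha V + \beta V' + \gamma V''$ and checking it collapses to $u$ using \eqref{u ODE}; this makes the verification a finite polynomial identity in $t$ and $\tau$, independent of $u$, which can be confirmed directly (and cross-checked numerically via the discrete representation \eqref{u discrete rep}, as the paper notes elsewhere). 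I would also record the degree count — $d(t)$ has degree $5$, $a(t)$ degree $6$, $b(t)$ degree $5$, $c(t)$ degree $4$ — as a consistency check that the rational functions $\alpha, \beta, \gamma$ have the expected pole structure at $t=0$ (from the $1/t$ and $1/t^2$ terms in $a_2, a_1$) and at the zeros of $d(t)$.
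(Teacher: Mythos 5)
Your proposal is correct and follows essentially the same route as the paper: the paper also uses the ODE of Proposition~\ref{Prop_u ODE} to reduce $V$, $V+tV'$ and $2V'+tV''$ to rational-coefficient combinations of $u,u',u''$ (equations \eqref{u ODE V}, \eqref{V' V}, \eqref{V' V''}) and then solves the resulting $3\times 3$ linear system for $\alpha,\beta,\gamma$ via the identity $t^2u=(\alpha t^2-\beta t+2\gamma)V+(\beta t-2\gamma)(V+tV')+\gamma t(2V'+tV'')$, which is your inversion of $P(t)$ in slightly different clothing. The only cosmetic difference is that the paper eliminates $u'''$ by adding a multiple of the ODE (whose leading coefficient matches that of the operator in \eqref{M u mixed ODE}) rather than dividing by $a_3=\tfrac{1-\tau}{2}$, so no separate treatment of the case $\tau=1$ is required.
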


We are now ready to prove Theorem~\ref{Thm_M ODE}.

\begin{proof}[Proof of Theorem~\ref{Thm_M ODE}]
It suffices to show that
\begin{equation} \label{V ODE}
B_4(t) V^{(4)}(t) + B_3(t) V'''(t)+B_2(t) V''(t)+B_1(t) V'(t)+B_0(t) V(t)=0,
\end{equation}
where $B_k$'s ($k=0,1,\dots,4$) are given in Subsection~\ref{Subsection beta jk}. 
Then the theorem follows by substituting \eqref{V definition} into \eqref{V ODE}.
By Proposition~\ref{Prop_ODE M u} and \eqref{V definition}, we have 
\begin{align}
\begin{split} \label{V in terms of u}
V(t)&=\frac{1-\tau}{2}u'''(t) -\frac{(1+\tau)(1-4\tau+\tau^2)}{4} \,t\, u''(t)
\\
&\quad - (1+\tau)\Big( \frac{3\tau(1-\tau^2)}{8} \, t^2 +\frac{1-\tau^2}{2} (N-1)+\frac{1-\tau}{2} \Big) u'(t)
\\
&\quad  -\tau(1+\tau)^2 \Big( \frac{\tau(1+\tau)}{8} \,t^2+\frac{1+\tau}2 (N-1)+\frac{5+\tau}{8} \Big) \,t\,u(t). 
\end{split}
\end{align}
Furthermore by using Proposition~\ref{Prop_u ODE}, we have
\begin{align}
\begin{split} \label{u ODE V}
 4(1-\tau)\,t\, u''(t) +\Big( 2\tau(1+\tau)(4-\tau) \,t^2 -4(1-\tau) \Big)  \, u'(t) + 3\tau^2  (1+\tau)^2 \,t^3\, u(t) + 4t^2\,V(t)=0. 
\end{split}
\end{align}
Then the desired formula \eqref{V ODE} follows from long but straightforward computations using Lemma~~\ref{Lem_u and V} and \eqref{u ODE V}. 
\end{proof}

By means of Theorem~\ref{Thm_M ODE}, we now complete the proof of Theorem~\ref{Thm_main}.

\begin{proof}[Proof of Theorem~\ref{Thm_main}]
Note that by \eqref{M Mp definition}, we have 
\begin{equation} \label{M tau derivatives}
M_\tau^{(k)}(t) = \sum_{j=k}^\infty \frac{ t^{j-k} }{(j-k)!} M_{j,\tau} = \sum_{ j=0 }^\infty  \frac{ t^j }{ j! }  M_{j+k,\tau}.
\end{equation}
We substitute this expression into \eqref{M ODE main} and collect the coefficient of $t^j$. This gives 
\begin{align*}
A_k(t) M_\tau^{(k)}(t) &=  \sum_{l=0}^{\lfloor (17-k)/2 \rfloor } \mathfrak{a}_{k,17-k-2l} \, t^{17-k-2l}  \, \sum_{ s=0 }^\infty  \frac{ M_{s+k,\tau} }{ s! }  \, t^s
= \sum_{j=0}^\infty   \bigg(  \sum_{l=0}^{\lfloor (17-k)/2 \rfloor }  \mathfrak{a}_{k,17-k-2l}  \frac{ M_{ j+2k+2l-17  } }{ (k+j+2l-17)!  }    \bigg)   \,t^j
\\
&= \sum_{j=0}^\infty   \bigg(  \sum_{l=0}^{\lfloor (17-k)/2 \rfloor }  \mathfrak{a}_{k,17-k-2l}  (j+k+2l-16)_{ 17-k-2l }\,M_{ j+2k+2l-17  }   \bigg)   \,\frac{t^j}{j!}.
\end{align*}
Then it follows from \eqref{M ODE main} that
\begin{equation}
\sum_{k=0}^7  \sum_{l=0}^{\lfloor (17-k)/2 \rfloor }  \mathfrak{a}_{k,17-k-2l}  (j+k+2l-16)_{ 17-k-2l }\,M_{ j+2k+2l-17  }     =0. 
\end{equation}
In particular, after some computations, it follows that the coefficient of $M_{j+3}$ is given by 
    \begin{align*}
    &\quad  \mathfrak{a}_{3,0}+ \mathfrak{a}_{4,1} j  +  \mathfrak{a}_{5,2} (j-1)_{2}  +  \mathfrak{a}_{6,3} (j-2)_{3} +  \mathfrak{a}_{7,4} (j-3)_{4}
    \\
    &= - 64 (j-5) (j-3) (j-1) (j+4) (1-\tau)^6
 \Big( 1  + 6 \tau + 2 N (1-\tau^2)\Big) \Big(4  + 17 \tau + 6 \tau^2 + 2 N(1- \tau^2) \Big)^2. 
    \end{align*}
By letting $j=2p-3,$ we conclude that for any integer $p \ge 10$, 
\begin{align}
\begin{split}
& \quad 2(2p+1)(1-\tau)^6
\Big( 1  + 6 \tau + 2 N (1-\tau^2)\Big) \Big(4  + 17 \tau + 6 \tau^2 + 2 N(1- \tau^2) \Big)^2 M_{2p}
\\
&=  \sum_{l=1}^{10} \bigg[ \sum_{ k=0 }^{ \min\{ 10-l, 7 \} } \frac{ (2p-k-2l+1)_{k+2l-3} }{ 
256(p-4) (p-3) (p-2) }\,\mathfrak{a}_{k,k+2l-3} \bigg] M_{2p-2l}.
\end{split}
\end{align} 
This completes the proof. 
\end{proof}


\section{Derivations of linear differential equations} \label{Section_ODEs}

In this section, we provide remaining proofs of Propositions~\ref{Prop_u ODE}, \ref{Prop_ODE M u} and Lemma~\ref{Lem_u and V}. 
Subsections~\ref{Subsection_Prop u ODE}, ~\ref{Subsection_Prop ODE M u} and ~\ref{Subsection_Prop uv in Lem} are devoted to the proofs of Propositions~\ref{Prop_u ODE}, \ref{Prop_ODE M u} and Lemma~\ref{Lem_u and V} respectively, the main steps for completing the proof of Theorem~\ref{Thm_M ODE}.

In the sequel, we shall frequently use the Gaussian integration by parts: for a differentiable function $f$ with polynomial growth, 
\begin{equation} \label{Gaussian IBP}
\int_\R x f(x) \, e^{ -\frac{ x^2 }{ 1+\tau }  } \,dx= \frac{1+\tau}{2} \int_\R  f'(x) \, e^{ -\frac{ x^2 }{ 1+\tau }  } \,dx. 
\end{equation}
We shall also often utilize the following version of the Gaussian integration by parts associated with the the Ornstein–Uhlenbeck operator: for differentiable functions $f$ and $g$ with polynomial growth,
\begin{equation} \label{Gaussian IBP 2}
\int_\R  f(x) \,\Big(  g''(x)-\frac{2}{1+\tau} \,x \,g'(x)  \Big) e^{ -\frac{ x^2 }{ 1+\tau }  } \,dx=  -\int_\R  f'(x) g'(x)  e^{ -\frac{ x^2 }{ 1+\tau }  } \,dx.
\end{equation}

\subsection{Proof of Proposition~\ref{Prop_u ODE}} \label{Subsection_Prop u ODE}

In this subsection, we prove Proposition~\ref{Prop_u ODE}. 
For this, we shall utilize the integral representation \eqref{u integral rep} of $u(t)$. 
We first derive a differential equation for a related but simpler function $\sigma$ to analyse, for which we replace the integrand $H_{N-2}H_{N-1}$ in \eqref{u integral rep} with $H_{N-1}^2$.

\begin{lem} \label{Lem_ODE for sigma}
For even integer $N \ge 2$ and $\tau \in [0,1]$, let
\begin{equation} \label{sigma definition}
\sigma(t):=  \frac{2 }{ 1+\tau }   \frac{ (\tau/2)^{N-\frac32 } }{(N-2)!}  \int_\R  e^{tx}\,   H_{N-1}\Big(\frac{x}{ \sqrt{2\tau} }\Big)^2  \frac{ e^{ -\frac{ x^2 }{ 1+\tau }  }  }{ \sqrt{2\pi} }  \, dx.
\end{equation}
Then we have 
\begin{align}
\begin{split}
 \frac{4(1-\tau)}{1+\tau} \,\sigma'''(t)  & =  2(1-4\tau+\tau^2) \,t \,\sigma''(t)
+ \bigg[ 3\tau(1-\tau^2)\, t^2 + 4 \Big( (N-1)(1-\tau^2)+1-2\tau\Big)  \bigg] \sigma'(t)
\\
&\quad   + \bigg[  \tau^2(1+\tau)\, t^2+  4(N-1)\tau(1+\tau)+\tau(5-\tau ) \bigg] (1+\tau) \,t\, \sigma(t).
\end{split}
\end{align}
\end{lem}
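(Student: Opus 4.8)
\textbf{Proof plan for Lemma~\ref{Lem_ODE for sigma}.}
The plan is to work directly with the Gaussian integral \eqref{sigma definition} and exploit the second-order ODE satisfied by the Hermite function $h(x) := H_{N-1}(x/\sqrt{2\tau})$. From $H_{N-1}''(y) - 2y H_{N-1}'(y) + 2(N-1)H_{N-1}(y) = 0$, a change of variables $y = x/\sqrt{2\tau}$ gives a second-order ODE for $h$ with polynomial coefficients in $x$. Squaring and combining, one obtains a (third-order, since $(h^2)''' $ can be re-expressed) linear relation with polynomial coefficients satisfied by $w(x) := h(x)^2 \, e^{-x^2/(1+\tau)}/\sqrt{2\pi}$, i.e. the ``profile function'' whose Laplace transform (up to the prefactor) is $\sigma(t)$. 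The key mechanism is that differentiating $\sigma$ in $t$ brings down powers of $x$ inside the integral, so a polynomial-coefficient ODE in $x$ for $w(x)$ translates, after integration by parts in $x$ to remove the $e^{tx}$ derivatives landing on $w$, into a polynomial-coefficient ODE in $t$ for $\sigma(t)$.

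Concretely, first I would record the two elementary identities that will be used repeatedly: the Gaussian integration by parts \eqref{Gaussian IBP} and its Ornstein--Uhlenbeck variant \eqref{Gaussian IBP 2}. Using the Hermite ODE for $h$, I would derive the first-order-in-$x$ relation
\[
h'(x)^2 \;=\; \frac{x}{2\tau}\,\big(h^2\big)'(x) \;+\; \frac{N-1}{\tau}\, h(x)^2,
\]
obtained by multiplying the Hermite ODE by $h$ and rearranging, together with the companion identity coming from $\tfrac{d}{dx}\big(h'(x)^2\big)$. Next I would compute $\int e^{tx}\, (h^2)''(x)\, e^{-x^2/(1+\tau)}\,dx$ and similar integrals, using \eqref{Gaussian IBP 2} with $g = $ (suitable primitive) to trade the $x$-derivatives for factors of $t$ (from the $e^{tx}$) plus lower-order terms. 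Iterating this — each application lowers the differential order in $x$ by shifting it to $t$ — I expect to land on a closed linear relation among $\sigma, \sigma', \sigma'', \sigma'''$ with the stated polynomial coefficients; matching coefficients of $t^0, t^1, t^2$ in the polynomial multipliers should pin down the exact constants $1-4\tau+\tau^2$, $3\tau(1-\tau^2)$, etc.

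The main obstacle I anticipate is purely a bookkeeping one: keeping track of the numerous boundary-free integration-by-parts steps and the proliferation of terms of the form $\int x^a e^{tx} h^{(b)}(x) h^{(c)}(x) e^{-x^2/(1+\tau)}\,dx$, and organizing them so that the $t$-polynomial coefficients assemble into exactly the claimed form. A secondary subtlety is confirming that all boundary terms at $x = \pm\infty$ vanish — this follows from the Gaussian decay dominating the polynomial growth of Hermite functions and the factor $e^{tx}$, but should be noted once. There is no conceptual difficulty: the whole argument is a structured, if lengthy, manipulation of Gaussian integrals driven by the Hermite differential equation, and the degree count (a degree-$2$ ODE for $h$ producing a degree-$3$ ODE for the Laplace transform of $h^2 e^{-x^2/(1+\tau)}$) matches the order of the asserted equation for $\sigma$.
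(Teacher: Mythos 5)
Your plan is essentially the paper's proof: the paper derives from the Hermite differentiation rule the third-order ``symmetric square'' equation $\big[4(N-1)x-(4(N-1)\tau-\tau+2x^2)\,\partial_x+3x\tau\,\partial_x^2-\tau^2\partial_x^3\big]\phi(x)=0$ for $\phi(x)=H_{N-1}(x/\sqrt{2\tau})^2$, and then converts it into the stated $t$-equation for $\sigma$ by precisely your mechanism — multiplication by $x$ under the integral becomes $\partial_t$, while $\partial_x$ acting on $\phi$ becomes $\tfrac{2}{1+\tau}\partial_t-t$ after a boundary-free integration by parts against $e^{tx-x^2/(1+\tau)}$. One slip in your sketch: the displayed identity should read $h'(x)^2=\tfrac12\,(h^2)''(x)-\tfrac{x}{2\tau}\,(h^2)'(x)+\tfrac{N-1}{\tau}\,h(x)^2$ (your version drops the $(h^2)''$ term and flips the sign of the $x$-term), but this does not affect the plan, since combining $hh''$ from the Hermite equation with $(h^2)''=2hh''+2(h')^2$ is exactly how one arrives at the third-order equation above.
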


\begin{proof}
We first write  
$$
\sigma(t)=  c  \int_\R  e^{tx -\frac{ x^2 }{ 1+\tau } }\,   \phi(x) \, dx, \qquad 
\phi(x):= H_{N-1}\Big(\frac{x}{ \sqrt{2\tau} }\Big)^2, \qquad c= \frac{1  }{ \sqrt{2\pi} }  \frac{2 }{ 1+\tau }   \frac{ (\tau/2)^{N-\frac32 } }{(N-2)!}  .
$$
Note that by the well-known differentiation rule 
\begin{equation}  \label{Hermite diff rule}
H_k'(x)= 2k \,H_{k-1}(x)
\end{equation}
of the Hermite polynomials, after some manipulations, we have 
\begin{equation}
\Big[ 4(N-1)x-( 4 (N-1) \tau-\tau +2x^2 ) \partial_x +3x\tau \partial_x^2 -\tau^2 \partial_x^3 \Big] \phi(x) =0. 
\end{equation}
Using integration by parts, we have 
$$
\int_\R e^{ tx -\frac{x^2}{1+\tau} } \partial_x \phi(x)\,dx = \int_\R  \phi(x) \Big( \frac{2x}{1+\tau} -t \Big) e^{ tx -\frac{x^2}{1+\tau} } \,dx  = \int_\R  \phi(x) \Big( \frac{2 \partial_t }{1+\tau} -t \Big) e^{ tx -\frac{x^2}{1+\tau} } \,dx.
$$
Then the desired equation follows from 
$$
\int_\R e^{ tx -\frac{x^2}{1+\tau} } \,x\,\phi(x)\,dx = \partial_t \int_\R e^{ tx -\frac{x^2}{1+\tau} } \phi(x)\,dx .
$$
\end{proof}

We are now ready to prove Proposition~\ref{Prop_u ODE}

\begin{proof}[Proof of Proposition~\ref{Prop_u ODE}]
Let us define
\begin{equation}  \label{rho definition}
\rho(t):= \frac{2 }{ 1+\tau }   \frac{ (\tau/2)^{N-\frac32 } }{(N-2)!}  \int_\R  e^{tx}\,  H_{N-2}\Big(\frac{x}{\sqrt{2\tau}} \Big) H_{N-1}\Big(\frac{x}{ \sqrt{2\tau} }\Big)  \frac{ e^{ -\frac{ x^2 }{ 1+\tau }  }  }{ \sqrt{2\pi} }  \, dx. 
\end{equation}
Notice that by definition, we have 
\begin{equation} \label{u rho relation}
t\, u(t) = \rho(t).
\end{equation}    
We also write 
\begin{equation}
 \widetilde{\rho}(t) := (N-1) \sqrt{ \frac{2}{\tau} } (1+\tau)\rho(t).
\end{equation}
Note that  
\begin{align}
\begin{split} \label{sigma' sigma}
\sigma'(t) - \frac{1+\tau}{2} \,t\,\sigma(t) =    \frac{ (\tau/2)^{N-2 } }{(N-2)!}  \int_\R   e^{tx}\,    H_{N-1}\Big(\frac{x}{ \sqrt{2\tau} }\Big) H_{N-1}'\Big(\frac{x}{ \sqrt{2\tau} }\Big)   \frac{ e^{ -\frac{ x^2 }{ 1+\tau }  }  }{ \sqrt{2\pi} }  \, dx.
\end{split}
\end{align} 
Then by \eqref{sigma' sigma} and \eqref{Hermite diff rule}, we have 
\begin{equation}
 \label{rho tilde sigma}
\widetilde{\rho}(t) = 2\sigma'(t) - (1+\tau) \,t\,\sigma(t) .
\end{equation}
By taking derivatives, we have 
\begin{align}
\begin{split}
&\quad -\frac{2(1-\tau)}{(1+\tau)} \widetilde{\rho}''(t) -\tau(3-\tau) \,t \, \widetilde{\rho}'(t) -\Big( \tau^2(1+\tau)t^2-2(N-1)(1-\tau^2)+2\tau \Big)  \widetilde{\rho}(t)
\\
&=  -\frac{4(1-\tau)}{(1+\tau)}  \, \sigma'''(t) +2 (1-4\tau+\tau^2) \,t\, \sigma''(t)  + \bigg[ 3\tau(1-\tau^2)\, t^2 + 4 \Big( (N-1)(1-\tau^2)+1-2\tau\Big)  \bigg] \sigma'(t)
\\
& \quad + \bigg[   \tau^2(1+\tau)t^2-2(N-1)(1-\tau^2) +\tau(5-\tau)  \bigg]  (1+\tau)\,t \, \sigma(t). 
\end{split}
\end{align}
Combining this with Lemma~\ref{Lem_ODE for sigma}, we obtain  
\begin{align}
\begin{split} \label{sigma rho 1}
 2(N-1)(1+\tau)^3 \,t\,\sigma(t)   
& =  \frac{2(1-\tau)}{(1+\tau)} \widetilde{\rho}''(t)+ \tau(3-\tau) \,t \, \widetilde{\rho}'(t) 
+\Big( \tau^2(1+\tau) \, t^2-2(N-1)(1-\tau^2)+2\tau \Big)  \widetilde{\rho}(t). 
\end{split}
\end{align}
Differentiating this expression, we obtain 
\begin{align}
\begin{split} \label{sigma rho 2}
&\quad  2(N-1)(1+\tau)^3 \,\Big( \sigma(t)+t\,\sigma'(t)\Big) 
=  \frac{2(1-\tau)}{(1+\tau)} \widetilde{\rho}'''(t)+ \tau(3-\tau) \,t \, \widetilde{\rho}''(t) 
\\
&\quad +\Big( \tau^2(1+\tau) \, t^2-2(N-1)(1-\tau^2)+\tau(5-\tau) \Big)  \widetilde{\rho}'(t)+ 2\tau^2(1+\tau)\,t\, \widetilde{\rho}(t).  
\end{split}
\end{align}
Then by the linear combination of \eqref{sigma rho 1} and \eqref{sigma rho 2} together with the definition of $\widetilde{\rho}$ in \eqref{rho tilde sigma}, we conclude
\begin{align}
\begin{split}
0 &= \frac{4(1-\tau)}{(1+\tau)}\,t\, \rho'''(t) -\Big( 2(1-4\tau+\tau^2) \, t^2   +\frac{4(1-\tau)}{(1+\tau)}  \Big) \, \rho''(t)
\\
&\quad - \bigg[ 3\tau(1-\tau^2) \, t^2+4(N-1)(1-\tau^2)- 4\tau   \bigg] t\, \rho'(t)
\\
&\quad  -\bigg[ \tau^2(1+\tau)^2 \,t^4 + 2\tau(1-\tau^2)\,t^2 + 4\tau +4(1+\tau) \Big( \tau(1+\tau)\,t^2-(1-\tau)\Big)(N-1)  \bigg] \rho(t).
\end{split}
\end{align}
Now the proposition follows from the relation \eqref{u rho relation}. 
\end{proof}

\subsection{Proof of Proposition~\ref{Prop_ODE M u}} \label{Subsection_Prop ODE M u}

In this subsection, we prove Proposition~\ref{Prop_ODE M u}. 
Recall that by \eqref{M u v}, we have $M(t)=u(t)+v(t)$.
Therefore, it suffices to show the following equivalent proposition. 

\begin{prop} \label{Prop_v u ODE}
For any even integer $N \ge 2$ and $\tau \in [0,1]$, we have  
\begin{align}
\begin{split} \label{v u ODE}
&\quad 4(1-\tau) v'''(t) -4(1+\tau)(\tau^2-3\tau+1) \,t\,v''(t) 
\\
&\quad - 4(1-\tau^2)\Big( 2\tau(1+\tau) \, t^2 +(N-1)(1+\tau)+2-\tau \Big) v'(t)  
\\
&\quad -4\tau(1+\tau)^2 \Big( \tau(1+\tau) \,t^2+(N-1)(1+\tau)+3-2\tau \Big) \,t\,v(t)
\\
&= 2(1-\tau)^2(1+\tau) \,t\, u''(t)+(1-\tau^2) \Big( 5\tau(1+\tau)\,t^2+4(1-\tau)\Big) u'(t) 
\\
&\quad +\tau(1+\tau)^2 \Big( 3\tau(1+\tau) \,t^2+7-9\tau \Big) \,t \,u(t). 
\end{split}
\end{align}
\end{prop}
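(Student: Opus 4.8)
The plan is to work directly from the integral representation \eqref{RN2 def} of $R_N^{(2)}$. Set $g(x):=e^{-x^2/(2(1+\tau))}H_{N-1}(x/\sqrt{2\tau})$, $F(x):=\int_0^x e^{-u^2/(2(1+\tau))}H_{N-2}(u/\sqrt{2\tau})\,du$ and $c:=\tfrac{(\tau/2)^{N-3/2}}{(1+\tau)(N-2)!\sqrt{2\pi}}$, so that $R_N^{(2)}(x)=c\,g(x)F(x)$ and $v(t)=c\int_\R e^{tx}g(x)F(x)\,dx$. Two matches are the point of the whole argument. First, $g(x)F'(x)=e^{-x^2/(1+\tau)}H_{N-1}(x/\sqrt{2\tau})H_{N-2}(x/\sqrt{2\tau})$ is, up to a constant, the integrand in the representation \eqref{u integral rep} of $u$, and one checks from \eqref{rho definition} and \eqref{u rho relation} that $c\int_\R e^{tx}g(x)F'(x)\,dx=\tfrac12\rho(t)=\tfrac12\,t\,u(t)$. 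Second, $g(x)^2=e^{-x^2/(1+\tau)}H_{N-1}(x/\sqrt{2\tau})^2$ is the integrand of $\sigma$, so $c\int_\R e^{tx}g(x)^2\,dx=\tfrac12\sigma(t)$ with $\sigma$ as in \eqref{sigma definition}. From $H_k'=2kH_{k-1}$ (cf.\ \eqref{Hermite diff rule}) one gets the first-order identity $g'+\tfrac{x}{1+\tau}\,g=\tfrac{2(N-1)}{\sqrt{2\tau}}\,F'$, and conjugating Hermite's equation by the Gaussian gives second-order linear ODEs with polynomial coefficients for $g$ and for $F'$ (the same equation up to the shift $N-1\mapsto N-2$).

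The algebraic core is the claim that, for $j=0,1,2,3$, the $x$-derivative $(gF)^{(j)}$ lies in the $\mathbb{R}[x]$-linear span of $\{\,gF,\ (gF)',\ gF',\ g^2,\ (g^2)',\ (g^2)''\,\}$. This is a finite computation: whenever a $g''$ appears one substitutes its second-order ODE, whenever an $F''$ or $F'''$ appears one substitutes the (shifted) ODE for $F'$ together with $F'=\tfrac{\sqrt{2\tau}}{2(N-1)}\big(g'+\tfrac{x}{1+\tau}g\big)$, the ``antiderivative'' term $g'F$ is removed through $g'F=(gF)'-gF'$, and the Hermite three-term recurrence is used to rewrite products such as $H_{N-1}H_{N-3}$ or $H_{N-2}^2$ in terms of $H_{N-1}^2$, $(g^2)'$, $(g^2)''$. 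Applying $\int_\R e^{tx}(\,\cdot\,)\,dx$ to the degree-$3$ identity, using $xe^{tx}=\partial_t e^{tx}$ and integrating by parts (boundary terms vanish since $g$ decays super-exponentially while $F$ stays bounded), and recording $\int_\R e^{tx}(gF)^{(j)}\,dx=(-t)^j v(t)/c$, $\int_\R e^{tx}gF'\,dx=tu(t)/(2c)$, $\int_\R e^{tx}(g^2)^{(j)}\,dx=(-t)^j\sigma(t)/(2c)$, one obtains a single linear relation with polynomial coefficients: a third-order differential operator applied to $v$ equals a differential operator applied to $u$ plus a differential operator applied to $\sigma$.

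What remains is to eliminate $\sigma$. First lower every $\sigma$-derivative of order $\ge 3$ by the third-order ODE of Lemma~\ref{Lem_ODE for sigma}. Then substitute the dictionary already available from the proof of Proposition~\ref{Prop_u ODE}: the relation \eqref{rho tilde sigma}, $\widetilde\rho=2\sigma'-(1+\tau)t\sigma$ with $\widetilde\rho=(N-1)\sqrt{2/\tau}\,(1+\tau)\,t\,u$, together with \eqref{sigma rho 1}, which expresses $t\sigma$ through $\widetilde\rho,\widetilde\rho',\widetilde\rho''$; this turns the whole $\sigma$-operator into a differential operator in $u$. Finally reduce the $u$-derivatives of order $\ge 3$ by Proposition~\ref{Prop_u ODE}, collect terms, and clear the denominators that have accumulated; the $u$-side should collapse to exactly the second-order operator on the right of \eqref{v u ODE}. (An equivalent route starts instead from the lower-order relation coming from $(gF)''$ — second order in $v$, with $u$- and $\sigma$-terms — and differentiates it once, using Proposition~\ref{Prop_u ODE} to keep the $u$-order at two.)

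The obstacle here is computational rather than conceptual. One must propagate degree-$\le3$ polynomial coefficients through three differentiations of a product, then through the Laplace transform, and then through the triple reduction by Lemma~\ref{Lem_ODE for sigma}, the $\sigma\leftrightarrow u$ dictionary, and Proposition~\ref{Prop_u ODE}. The genuinely non-obvious point is that the $\sigma$- and $u$-contributions look, a priori, to be of strictly higher order than the statement allows, and it is only because of the precise coefficients in Lemma~\ref{Lem_ODE for sigma} and Proposition~\ref{Prop_u ODE} that they telescope down to the second-order $u$-operator in \eqref{v u ODE}. It is advisable to keep every quantity in ``un-divided'' form ($t\sigma$, $tu$, $\rho$, $\widetilde\rho$ rather than $\sigma$, $u$) throughout, so as not to create spurious $1/t$ singularities when the relations are inverted.
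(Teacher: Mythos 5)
Your strategy is viable and its basic identities check out: $g'+\tfrac{x}{1+\tau}\,g=\tfrac{2(N-1)}{\sqrt{2\tau}}F'$ follows from \eqref{Hermite diff rule}, the normalisations $c\int_\R e^{tx}gF'\,dx=\tfrac12\rho(t)=\tfrac12\,t\,u(t)$ and $c\int_\R e^{tx}g^2\,dx=\tfrac12\sigma(t)$ are correct, and the $\mathbb{R}[x]$-span claim for $(gF)^{(j)}$, $j\le3$, does hold (repeatedly substitute the Gaussian-conjugated Hermite equation for $g''$, $g'''$ and use $F'\propto g'+\tfrac{x}{1+\tau}g$, together with $(g^2)'=2gg'$, $(g^2)''=2(g')^2+2gg''$). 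But this is a genuinely different route from the paper's. The paper never introduces $\sigma$ and never uses Lemma~\ref{Lem_ODE for sigma} or Proposition~\ref{Prop_u ODE}: it first establishes Lemma~\ref{Lem_v rho v0}, a relation among $v''$, $v'$, $v$, $\rho'$, $\rho$ and the single auxiliary integral $\int_\R e^{tx}e^{-x^2/(1+\tau)}H_{N-2}H_{N-1}'\,dx$, obtained by differentiating $v$ under the integral and applying \eqref{Gaussian IBP 2}; it then eliminates that auxiliary integral between Lemma~\ref{Lem_v rho v0}, its $t$-derivative, and two further identities for $(N-1)\rho$ and $(N-2)\rho$ coming from \eqref{Hk Hk'' Hk'}, whose difference expresses $\rho$ through the Wronskian combination $H_{N-2}H_{N-1}'-H_{N-2}'H_{N-1}$. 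The identity \eqref{v u ODE} then follows directly from $\rho=tu$ \eqref{u rho relation}, with no ODE for $u$ or $\sigma$ ever invoked. Your scheme trades this tailored elimination for a more mechanical one ($x$-space identity, Laplace transform, then reduction by Lemma~\ref{Lem_ODE for sigma}, the dictionary \eqref{rho tilde sigma}--\eqref{sigma rho 1}, and Proposition~\ref{Prop_u ODE}), at the price of a longer computation and a heavier dependence on previously proved results.

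The one point where your argument remains a plan rather than a proof is the final step, where you write that the $u$-side ``should collapse'' to the right-hand side of \eqref{v u ODE}. Note that the $v$-part of your relation is already fixed at the Laplace-transform stage, before any elimination; for your route to prove the stated proposition you must check that this $v$-operator is a nonzero polynomial multiple $m(t)$ of the third-order operator on the left of \eqref{v u ODE}, and that after the eliminations the $u$-side equals $m(t)$ times the right-hand side modulo the $u$-ODE (which is harmless, since it annihilates $u$) and modulo an overall power of $t$ (also harmless, by continuity at $t=0$). That proportionality is not structurally guaranteed by your setup and is exactly the coefficient bookkeeping you have deferred. Two smaller remarks: multiply through by $t$ before inverting \eqref{sigma rho 1}, as you anticipate, so that no $1/t$ terms are created; and at $\tau=1$ the leading coefficient $a_3=(1-\tau)/2$ in Proposition~\ref{Prop_u ODE} vanishes, so the reduction of $u'''$ degenerates there — conclude the $\tau=1$ case by continuity in $\tau$, or note, as the paper's proof shows, that \eqref{v u ODE} can be reached without using the $u$-ODE at all.
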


For this, we first prove the following lemma. 

\begin{lem} \label{Lem_v rho v0}
For any even integer $N \ge 2$ and $\tau \in [0,1]$, we have  
\begin{align}
\begin{split} \label{v rho H int}
&\quad  \frac{2}{1+\tau} v''(t) -2(1-\tau) \,t\,v'(t)-2 \Big( \tau(1+\tau) \,t^2+ (1+\tau) (N-1)+1 \Big) v(t) 
\\
&= \rho'(t)+  \tau(1+\tau) \,t\,\rho(t) - \frac{ (\tau/2)^{N-\frac32} }{ (N-2)! }   \sqrt{ 2 \tau}  \int_\R  e^{t x } \, \frac{ e^{ -\frac{x^2 }{1+\tau} } }{\sqrt{2\pi}}   H_{N-2} \Big( \frac{x}{ \sqrt{2\tau} } \Big)  \, H_{N-1}'\Big( \frac{x}{\sqrt{2\tau}} \Big) \,dx, 
\end{split}
\end{align}
where $\rho$ is given by \eqref{rho definition}. 
\end{lem}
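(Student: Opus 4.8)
The plan is to exploit the factorisation of $R_N^{(2)}$ in \eqref{RN2 def}. Write $R_N^{(2)}(x) = \kappa\,P(x)\,\psi(x)$ with
\[
\kappa := \frac{(\tau/2)^{N-3/2}}{(1+\tau)(N-2)!}, \qquad P(x) := \frac{e^{-x^2/(2(1+\tau))}}{\sqrt{2\pi}}\,H_{N-1}\Big(\tfrac{x}{\sqrt{2\tau}}\Big), \qquad \psi(x) := \int_0^x e^{-u^2/(2(1+\tau))}\,H_{N-2}\Big(\tfrac{u}{\sqrt{2\tau}}\Big)\,du ,
\]
so that $v(t) = \kappa\int_\R e^{tx}P(x)\psi(x)\,dx$. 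Two observations drive the argument. First, $\psi'(x) = e^{-x^2/(2(1+\tau))}H_{N-2}(x/\sqrt{2\tau})$, so both $\psi'(x)P(x)$ and $\psi'(x)P'(x)$ carry the full Gaussian weight $e^{-x^2/(1+\tau)}$: indeed $\psi'(x)P(x)$ is a constant multiple of the integrand of $\rho$ in \eqref{rho definition}, while $\psi'(x)P'(x)$ reduces, by $H_{N-1}'(y) = 2(N-1)H_{N-2}(y)$, to a combination of the integrand of $\rho'$ and that of the last integral in \eqref{v rho H int}. Second, combining the Hermite equation $H_{N-1}''(y) = 2yH_{N-1}'(y) - 2(N-1)H_{N-1}(y)$ with $y = x/\sqrt{2\tau}$ (equivalently $g''(x) = \tfrac{x}{\tau}g'(x) - \tfrac{N-1}{\tau}g(x)$ for $g(x) = H_{N-1}(x/\sqrt{2\tau})$), one finds that $P$ solves the second-order linear ODE
\[
\tau(1+\tau)\,P''(x) - (1-\tau)\,x\,P'(x) - \Big( \frac{x^2}{1+\tau} - (N-1)(1+\tau) - \tau \Big)\,P(x) = 0 .
\]

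First I would integrate the product of this ODE with $e^{tx}\psi(x)$ over $\R$ (the integrand being $\equiv 0$ in $x$). A couple of integrations by parts in $x$ then rewrite $\int_\R e^{tx}P''\psi\,dx$ and $\int_\R e^{tx}x\,P'\psi\,dx$ as linear combinations, with polynomial-in-$t$ coefficients, of the three integrals $\int_\R e^{tx}P\psi\,dx = v(t)/\kappa$, $\int_\R e^{tx}\psi'P\,dx$ and $\int_\R e^{tx}\psi'P'\,dx$, the surviving powers of $x$ being turned into $\partial_t$ under the integral sign (the boundary terms vanish since $\psi$ is bounded while $P, P'$ decay like Gaussians). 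By the first observation, $\int_\R e^{tx}\psi'P\,dx = \rho(t)/(2\kappa)$, and $\int_\R e^{tx}\psi'P'\,dx$ equals $\tfrac{1}{2\tau(1+\tau)\kappa}$ times the last integral in \eqref{v rho H int} minus $\tfrac{1}{2(1+\tau)\kappa}\rho'(t)$; here $H_{N-1}' = 2(N-1)H_{N-2}$ is used to identify the emerging $H_{N-2}^2$-integral with the $H_{N-2}H_{N-1}'$-integral of the statement, and, notably, no Gaussian integration by parts is needed. Assembling the terms and multiplying through by $-2$ yields \eqref{v rho H int}.

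The argument is essentially mechanical once the ODE for $P$ is in hand, and I expect the main difficulty to be bookkeeping rather than anything conceptual. Two points require care: deriving the ODE for $P$ correctly — where one must keep the $\sqrt{2\tau}$ factors straight when switching between $x$ and $y = x/\sqrt{2\tau}$ and apply the Hermite equation in the right variable — and then combining the constants $\kappa$, $\sqrt{2\tau}$ and $1+\tau$ so that the exact normalisation in \eqref{v rho H int} (the coefficient $\tfrac{2}{1+\tau}$ of $v''$, the combination $(1+\tau)(N-1)+1$, and the precise signs of the $\rho'$, $\rho$ and $H_{N-2}H_{N-1}'$ contributions) comes out. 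This normalisation is not visible at any intermediate stage; it emerges only after every constant has been combined.
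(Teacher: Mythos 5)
Your proposal is correct, and I checked that it assembles to exactly \eqref{v rho H int}: with $\kappa$, $P$, $\psi$ as you define them, the ODE $\tau(1+\tau)P''-(1-\tau)xP'-\bigl(\tfrac{x^2}{1+\tau}-(N-1)(1+\tau)-\tau\bigr)P=0$ is right, and writing $I_0=\int_\R e^{tx}P\psi\,dx=v/\kappa$, $K_0=\int_\R e^{tx}P\psi'\,dx=\rho/(2\kappa)$, $K_1=\int_\R e^{tx}P'\psi'\,dx$, plain integration by parts gives $\int_\R e^{tx}P'\psi\,dx=-tI_0-K_0$ and $\int_\R e^{tx}P''\psi\,dx=t^2I_0+tK_0-K_1$; pairing the ODE with $e^{tx}\psi$, converting $x$-factors into $\partial_t$, and multiplying by $-2\kappa$ reproduces the lemma with the exact normalisation (the coefficient of $v$ comes out as $\tau(1+\tau)t^2+(N-1)(1+\tau)+1$, the $\rho'$-coefficient $1$ arises as $2\cdot(\tfrac{\tau}{2}+\tfrac{1-\tau}{2})$, and $(1+\tau)\sqrt{2\tau}\,\kappa$ is precisely the prefactor of the $H_{N-2}H_{N-1}'$ integral). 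This is, however, organised differently from the paper's proof. The paper never forms an ODE for the weighted factor $P$: it differentiates $v$ twice in $t$ (its \eqref{v' v rho} and \eqref{v'' v' v rho}), which introduces the auxiliary half-Gaussian integral $\int_\R e^{tx}H_{N-1}'(x/\sqrt{2\tau})\,e^{-x^2/(2(1+\tau))}\psi(x)\,dx$, then expresses $(N-1)v(t)$ via the Hermite relation \eqref{Hk Hk'' Hk'} and eliminates the auxiliary integral using the Ornstein--Uhlenbeck integration by parts \eqref{Gaussian IBP 2} together with a splitting of the term $\sqrt{2/\tau}\,x\,H_{N-1}'$. Your route conjugates the Hermite equation by the weight $e^{-x^2/(2(1+\tau))}$ at the outset, so only ordinary integration by parts is needed, the auxiliary integral never appears, and the weight-splitting step is avoided; what the paper's route buys is that it reuses the same $t$-derivative and Gaussian-IBP machinery employed in its parallel treatments of $\sigma$ and $\rho$, keeping the whole section uniform. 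Both proofs ultimately rest on the same two ingredients: the second-order equation satisfied by the $H_{N-1}$ factor and the observation that $\psi'$ completes the Gaussian weight, so that $P\psi'$ and $P'\psi'$ produce the $\rho$-, $\rho'$- and $H_{N-2}H_{N-1}'$-integrals.
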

\begin{proof}
Note that by \eqref{RN2 def}, we have
\begin{equation}
v(t)= \frac{(\tau/2)^{N-\frac32} }{ 1+\tau } \frac{ 1 }{ (N-2)! }    \int_\R e^{t x } \, \frac{ e^{ -\frac{x^2 }{2(1+\tau)} } }{\sqrt{2\pi}}  H_{N-1}\Big( \frac{x}{ \sqrt{2\tau} } \Big)\,  
	\bigg[ \int_{0}^{x} e^{ -\frac{ u^2 }{2(1+\tau)}  } H_{N-2} \Big( \frac{u}{ \sqrt{2\tau} } \Big)\,du \bigg] \,dx. 
\end{equation}
Therefore we have 
\begin{align*}
v'(t)  = \frac{ (\tau/2)^{N-\frac32}  }{ (N-2)! }    \int_\R  \frac{d}{dx} \bigg[  e^{t x } \, \frac{ e^{ -\frac{x^2 }{2(1+\tau)} } }{\sqrt{2\pi}}  H_{N-1}\Big( \frac{x}{ \sqrt{2\tau} } \Big)\,  
	 \int_{0}^{x} e^{ -\frac{ u^2 }{2(1+\tau)}  } H_{N-2} \Big( \frac{u}{ \sqrt{2\tau} } \Big)\,du \bigg] \,dx. 
\end{align*}
Here, by using the definition of $\rho$ in \eqref{rho definition}, we have
\begin{align}
\begin{split}  \label{v' v rho}
v'(t)&= (1+\tau)\,t\, v(t)   + \frac{1+\tau}{2} \rho(t) 
\\
&\quad + \frac{1}{\sqrt{2\tau}} \frac{ (\tau/2)^{N-\frac32}  }{ (N-2)! }  \int_\R   e^{tx}\, H_{N-1}'\Big( \frac{x}{ \sqrt{2\tau} } \Big) e^{ -\frac{ x^2 }{2(1+\tau)}  }  \bigg[ \int_{0}^{x} H_{N-2} \Big( \frac{u}{ \sqrt{2\tau} } \Big) \,\frac{ e^{ -\frac{ u^2 }{2(1+\tau)}  } }{ \sqrt{2\pi}  } \, du  \bigg]\,dx. 
\end{split}
\end{align}
By differentiating this, we also obtain 
\begin{align}
\begin{split}  \label{v'' v' v rho}
v''(t)&= (1+\tau)\, v(t)+(1+\tau)\,t\,v'(t)   + \frac{1+\tau}{2} \rho'(t) 
\\
&\quad + \frac{1}{\sqrt{2\tau}} \frac{ (\tau/2)^{N-\frac32}  }{ (N-2)! }  \int_\R  x\, e^{tx}\, H_{N-1}'\Big( \frac{x}{ \sqrt{2\tau} } \Big) e^{ -\frac{ x^2 }{2(1+\tau)}  }  \bigg[ \int_{0}^{x} H_{N-2} \Big( \frac{u}{ \sqrt{2\tau} } \Big) \,\frac{ e^{ -\frac{ u^2 }{2(1+\tau)}  } }{ \sqrt{2\pi}  } \, du \bigg]\,dx .
\end{split}
\end{align}

We now make use of the recurrence relation 
\begin{equation} \label{Hk Hk'' Hk'}
H_k''(x)-2 x H_k'(x) = -2 k H_k(x). 
\end{equation} to write  
\begin{align}
\begin{split} \label{v N-1}
&\quad (N-1) v(t) = -\frac12 \frac{(\tau/2)^{N-\frac32} }{ 1+\tau } \frac{ 1 }{ (N-2)! }  
\\
&\quad \times \int_\R e^{t x } \, \frac{ e^{ -\frac{x^2 }{2(1+\tau)} } }{\sqrt{2\pi}}  \bigg[  H_{N-1}''\Big( \frac{x}{ \sqrt{2\tau} }\Big)- \sqrt{ \frac{2}{\tau} } \,x\, H_{N-1}'\Big( \frac{x}{ \sqrt{2\tau} }\Big) \bigg] 
\bigg[ \int_{0}^{x} e^{ -\frac{ u^2 }{2(1+\tau)}  } H_{N-2} \Big( \frac{u}{ \sqrt{2\tau} } \Big)\,du \bigg] \,dx. 
\end{split}
\end{align}
As before, we write 
\begin{align*}
&\quad \int_\R e^{t x } \, \frac{ e^{ -\frac{x^2 }{2(1+\tau)} } }{\sqrt{2\pi}}  \bigg[  H_{N-1}''\Big( \frac{x}{ \sqrt{2\tau} }\Big)- \sqrt{ \frac{2}{\tau} } \,x\, H_{N-1}'\Big( \frac{x}{ \sqrt{2\tau} }\Big) \bigg] 
\bigg[ \int_{0}^{x} e^{ -\frac{ u^2 }{2(1+\tau)}  } H_{N-2} \Big( \frac{u}{ \sqrt{2\tau} } \Big)\,du \bigg] \,dx
\\
&= \int_\R e^{t x } \, \frac{ e^{ -\frac{x^2 }{2(1+\tau)} } }{\sqrt{2\pi}}   \bigg[  H_{N-1}''\Big( \frac{x}{ \sqrt{2\tau} }\Big)- \frac{ \sqrt{2\tau} }{ 1+\tau }  \,x\, H_{N-1}'\Big( \frac{x}{ \sqrt{2\tau} }\Big) \bigg] 
\bigg[ \int_{0}^{x} e^{ -\frac{ u^2 }{2(1+\tau)}  } H_{N-2} \Big( \frac{u}{ \sqrt{2\tau} } \Big)\,du \bigg] \,dx
\\
&\quad -  \frac{ 1 }{ 1+\tau }  \sqrt{ \frac{2}{\tau} } \int_\R e^{t x } \, \frac{ e^{ -\frac{x^2 }{2(1+\tau)} } }{\sqrt{2\pi}}   \,x\, H_{N-1}'\Big( \frac{x}{ \sqrt{2\tau} }\Big) 
\bigg[ \int_{0}^{x} e^{ -\frac{ u^2 }{2(1+\tau)}  } H_{N-2} \Big( \frac{u}{ \sqrt{2\tau} } \Big)\,du \bigg] \,dx.
\end{align*}
Then by applying \eqref{Gaussian IBP 2} with 
$$
f(x)= e^{t x } \,   \int_{0}^{x} \frac{ e^{ -\frac{ u^2 }{2(1+\tau)}  } }{\sqrt{2\pi}}  H_{N-2} \Big( \frac{u}{ \sqrt{2\tau} } \Big)\,du , \qquad g(x) = 2 \tau \, H_{N-1}\Big( \frac{x}{\sqrt{2\tau}} \Big),    
$$
it follows that 
\begin{align*}
&\quad \int_\R e^{t x } \, \frac{ e^{ -\frac{x^2 }{2(1+\tau)} } }{\sqrt{2\pi}}  \bigg[  H_{N-1}''\Big( \frac{x}{ \sqrt{2\tau} }\Big)- \sqrt{ \frac{2}{\tau} } \,x\, H_{N-1}'\Big( \frac{x}{ \sqrt{2\tau} }\Big) \bigg] 
\bigg[ \int_{0}^{x} e^{ -\frac{ u^2 }{2(1+\tau)}  } H_{N-2} \Big( \frac{u}{ \sqrt{2\tau} } \Big)\,du \bigg] \,dx
\\
&= - \sqrt{ 2 \tau}  \int_\R  e^{t x } \, \frac{ e^{ -\frac{x^2 }{1+\tau} } }{\sqrt{2\pi}}   H_{N-2} \Big( \frac{x}{ \sqrt{2\tau} } \Big)  \, H_{N-1}'\Big( \frac{x}{\sqrt{2\tau}} \Big) \,dx
\\
&\quad -\sqrt{2\tau} \,t \int_\R  e^{t x } \, \frac{ e^{ -\frac{x^2 }{2(1+\tau)} } }{\sqrt{2\pi}} \,H_{N-1}'\Big( \frac{x}{\sqrt{2\tau}} \Big) \bigg[ \int_{0}^{x} e^{ -\frac{ u^2 }{2(1+\tau)}  } H_{N-2} \Big( \frac{u}{ \sqrt{2\tau} } \Big)\,du\bigg] \,dx
\\
&\quad -  \frac{ 1 }{ 1+\tau }  \sqrt{ \frac{2}{\tau} } \int_\R e^{t x } \, \frac{ e^{ -\frac{x^2 }{2(1+\tau)} } }{\sqrt{2\pi}}   \,x\, H_{N-1}'\Big( \frac{x}{ \sqrt{2\tau} }\Big) 
\bigg[ \int_{0}^{x} e^{ -\frac{ u^2 }{2(1+\tau)}  } H_{N-2} \Big( \frac{u}{ \sqrt{2\tau} } \Big)\,du \bigg] \,dx.
\end{align*}
Furthermore, by using \eqref{v' v rho} and \eqref{v N-1}, we obtain
\begin{align}
\begin{split}
&\quad \Big( 2\tau(1+\tau) \,t^2+ 2(1+\tau) (N-1) \Big) v(t) - 2\tau  \,t\, v'(t)+  \tau(1+\tau) \,t\,\rho(t)
\\
&=  \frac{ (\tau/2)^{N-\frac32} }{ (N-2)! }   \sqrt{ 2 \tau}  \int_\R  e^{t x } \, \frac{ e^{ -\frac{x^2 }{1+\tau} } }{\sqrt{2\pi}}   H_{N-2} \Big( \frac{x}{ \sqrt{2\tau} } \Big)  \, H_{N-1}'\Big( \frac{x}{\sqrt{2\tau}} \Big) \,dx
\\
&\quad + \frac{ (\tau/2)^{N-\frac32} }{ (N-2)! }   \frac{ 1 }{ 1+\tau }  \sqrt{ \frac{2}{\tau} } \int_\R e^{t x } \, \frac{ e^{ -\frac{x^2 }{2(1+\tau)} } }{\sqrt{2\pi}}   \,x\, H_{N-1}'\Big( \frac{x}{ \sqrt{2\tau} }\Big) 
\bigg[ \int_{0}^{x} e^{ -\frac{ u^2 }{2(1+\tau)}  } H_{N-2} \Big( \frac{u}{ \sqrt{2\tau} } \Big)\,du \bigg] \,dx.
\end{split}
\end{align}
Combining this with \eqref{v'' v' v rho}, the lemma follows. 
\end{proof}

We now complete the proof of Proposition~\ref{Prop_v u ODE}.

\begin{proof}[Proof of Proposition~\ref{Prop_v u ODE}]

By taking the derivatives of \eqref{rho definition} and using the Gaussian integration by parts \eqref{Gaussian IBP}, we have 
\begin{align} \label{rho' rho}
\begin{split}
&\quad \rho'(t)- \frac{1+\tau}{2} \,t\,\rho(t) 
\\
&=  \frac{ (\tau/2)^{N-\frac32 } }{(N-2)!} \frac{1}{ \sqrt{2\tau} } \int_\R  e^{tx}\,\bigg[ H_{N-2}'\Big(\frac{x}{\sqrt{2\tau}} \Big) H_{N-1}\Big(\frac{x}{ \sqrt{2\tau} }\Big) +  H_{N-2}\Big(\frac{x}{\sqrt{2\tau}} \Big) H_{N-1}'\Big(\frac{x}{ \sqrt{2\tau} }\Big) \bigg] \frac{ e^{ -\frac{ x^2 }{ 1+\tau }  }  }{ \sqrt{2\pi} }  \, dx 
\end{split}
\end{align}
and 
\begin{align}
\begin{split} \label{rho '' rho' rho}
&\quad 2\tau \rho''(t)- \tau(1+\tau) \,t\,\rho'(t) -\tau(1+\tau) \rho(t)
\\
&=  \frac{ 2(\tau/2)^{N-1 } }{(N-2)!}   \int_\R  x\,e^{tx}\,\bigg[ H_{N-2}'\Big(\frac{x}{\sqrt{2\tau}} \Big) H_{N-1}\Big(\frac{x}{ \sqrt{2\tau} }\Big) +  H_{N-2}\Big(\frac{x}{\sqrt{2\tau}} \Big) H_{N-1}'\Big(\frac{x}{ \sqrt{2\tau} }\Big) \bigg] \frac{ e^{ -\frac{ x^2 }{ 1+\tau }  }  }{ \sqrt{2\pi} }  \, dx .
\end{split}
\end{align}
Furthermore, by using \eqref{Hk Hk'' Hk'} and a similar decomposition as above, we have
\begin{align*}
(N-1)\rho(t)&=  \frac{ 1 }{ 1+\tau }   \frac{ (\tau/2)^{N-\frac32 } }{(N-2)!}  \sqrt{2\tau}\, t \, \int_\R   e^{tx}\,   H_{N-2}\Big(\frac{x}{ \sqrt{2\tau} }\Big)   H_{N-1}'\Big(\frac{x}{ \sqrt{2\tau} }\Big)   \frac{ e^{ -\frac{ x^2 }{ 1+\tau }  }  }{ \sqrt{2\pi} }  \, dx 
\\
&\quad + \frac{ 1 }{ 1+\tau }   \frac{ (\tau/2)^{N-\frac32 } }{(N-2)!}  \int_\R  e^{tx}  H_{N-2}'\Big(\frac{x}{ \sqrt{2\tau} }\Big) H_{N-1}'\Big(\frac{x}{ \sqrt{2\tau} }\Big) \frac{ e^{ -\frac{ x^2 }{ 1+\tau }  }  }{ \sqrt{2\pi} }  \, dx
\\
&\quad - \frac{ 1 }{ 1+\tau }   \frac{ (\tau/2)^{N-\frac32 } }{(N-2)!} 
 \Big( \frac{ 2\sqrt{2\tau} }{ 1+\tau } - \sqrt{ \frac{2}{\tau} } \Big) \int_\R  e^{tx}\,   H_{N-2}\Big(\frac{x}{ \sqrt{2\tau} }\Big)  \,x\,  H_{N-1}'\Big( \frac{x}{ \sqrt{2\tau} }\Big)    \frac{ e^{ -\frac{ x^2 }{ 1+\tau }  }  }{ \sqrt{2\pi} }  \, dx 
\end{align*}
and 
\begin{align*}
(N-2)\rho(t)&=  \frac{ 1 }{ 1+\tau }   \frac{ (\tau/2)^{N-\frac32 } }{(N-2)!}  \sqrt{2\tau}\, t \, \int_\R   e^{tx}\,   H_{N-1}\Big(\frac{x}{ \sqrt{2\tau} }\Big)   H_{N-2}'\Big(\frac{x}{ \sqrt{2\tau} }\Big)   \frac{ e^{ -\frac{ x^2 }{ 1+\tau }  }  }{ \sqrt{2\pi} }  \, dx 
\\
&\quad + \frac{ 1 }{ 1+\tau }   \frac{ (\tau/2)^{N-\frac32 } }{(N-2)!}  \int_\R  e^{tx} H_{N-1}'\Big(\frac{x}{ \sqrt{2\tau} }\Big) H_{N-2}'\Big(\frac{x}{ \sqrt{2\tau} }\Big) \frac{ e^{ -\frac{ x^2 }{ 1+\tau }  }  }{ \sqrt{2\pi} }  \, dx
\\
&\quad - \frac{ 1 }{ 1+\tau }   \frac{ (\tau/2)^{N-\frac32 } }{(N-2)!} 
 \Big( \frac{ 2\sqrt{2\tau} }{ 1+\tau } - \sqrt{ \frac{2}{\tau} } \Big) \int_\R  e^{tx}\,   H_{N-1}\Big(\frac{x}{ \sqrt{2\tau} }\Big)  \,x\,  H_{N-2}'\Big( \frac{x}{ \sqrt{2\tau} }\Big)    \frac{ e^{ -\frac{ x^2 }{ 1+\tau }  }  }{ \sqrt{2\pi} }  \, dx .
\end{align*}
Subtracting these equations, we obtain
\begin{align*}
\begin{split}
\rho(t) & =  \frac{ \sqrt{2\tau} }{ 1+\tau }   \frac{ (\tau/2)^{N-\frac32 } }{(N-2)!}  \, t \, \int_\R   e^{tx}\,  \bigg[ H_{N-2}\Big(\frac{x}{ \sqrt{2\tau} }\Big)   H_{N-1}'\Big(\frac{x}{ \sqrt{2\tau} }\Big)-H_{N-2}'\Big(\frac{x}{ \sqrt{2\tau} }\Big)   H_{N-1}\Big(\frac{x}{ \sqrt{2\tau} }\Big)   \bigg]  \frac{ e^{ -\frac{ x^2 }{ 1+\tau }  }  }{ \sqrt{2\pi} }  \, dx 
\\
&\quad +  \frac{ (\tau/2)^{N-2 } }{(N-2)!}  \frac{1-\tau}{ (1+\tau)^2 }  \int_\R x\, e^{tx}\, \bigg[  H_{N-2}\Big(\frac{x}{ \sqrt{2\tau} }\Big)  H_{N-1}'\Big( \frac{x}{ \sqrt{2\tau} }\Big)-H_{N-2}'\Big(\frac{x}{ \sqrt{2\tau} }\Big)  H_{N-1}\Big( \frac{x}{ \sqrt{2\tau} }\Big)   \bigg]  \frac{ e^{ -\frac{ x^2 }{ 1+\tau }  }  }{ \sqrt{2\pi} }  \, dx. 
\end{split}
\end{align*}
Combining this expression with \eqref{rho' rho}, we obtain
\begin{align*}
\begin{split} 
 &\quad \tau \,\rho'(t)- \frac{\tau(1+\tau)}{2} \,t\,\rho(t)  + \frac{1+\tau}{2}\frac{ \rho(t) }{ t }
 \\
 &=   \frac{ (\tau/2)^{N-\frac32 } }{(N-2)!} \sqrt{2\tau} \, \int_\R   e^{tx}\, H_{N-2}\Big(\frac{x}{ \sqrt{2\tau} }\Big)   H_{N-1}'\Big(\frac{x}{ \sqrt{2\tau} }\Big)   \frac{ e^{ -\frac{ x^2 }{ 1+\tau }  }  }{ \sqrt{2\pi} }  \, dx 
 \\
&\quad +  \frac{ (\tau/2)^{N-\frac32 } }{(N-2)!} \frac{1}{ \sqrt{2\tau} } \frac{1-\tau}{ 1+\tau } \, \frac{1}{t}
 \int_\R x\, e^{tx}\, \bigg[  H_{N-2}\Big(\frac{x}{ \sqrt{2\tau} }\Big)  H_{N-1}'\Big( \frac{x}{ \sqrt{2\tau} }\Big)-H_{N-2}'\Big(\frac{x}{ \sqrt{2\tau} }\Big)  H_{N-1}\Big( \frac{x}{ \sqrt{2\tau} }\Big)   \bigg]  \frac{ e^{ -\frac{ x^2 }{ 1+\tau }  }  }{ \sqrt{2\pi} }  \, dx .
\end{split}
\end{align*}
Then it follows from Lemma~\ref{Lem_v rho v0} that
\begin{align*}
\begin{split}
&\quad  \frac{2}{1+\tau} v''(t) -2(1-\tau) \,t\,v'(t)-2 \Big( \tau(1+\tau) \,t^2+ (1+\tau) (N-1)+1 \Big) v(t) 
\\
&= (1-\tau) \rho'(t)+ \frac{1+\tau}{2} \Big( 3 
\tau \,t  - \frac{ 1 }{ t } \Big) \rho(t)
\\
&\quad +  \frac{ (\tau/2)^{N-\frac32 } }{(N-2)!} \frac{1}{ \sqrt{2\tau} } \frac{1-\tau}{ 1+\tau } \, \frac{1}{t}  \int_\R x\, e^{tx}\, \bigg[  H_{N-2}\Big(\frac{x}{ \sqrt{2\tau} }\Big)  H_{N-1}'\Big( \frac{x}{ \sqrt{2\tau} }\Big)-H_{N-2}'\Big(\frac{x}{ \sqrt{2\tau} }\Big)  H_{N-1}\Big( \frac{x}{ \sqrt{2\tau} }\Big)   \bigg]  \frac{ e^{ -\frac{ x^2 }{ 1+\tau }  }  }{ \sqrt{2\pi} }  \, dx .
\end{split}
\end{align*}

On the other hand, by differentiating \eqref{v rho H int} in Lemma~\ref{Lem_v rho v0} and using \eqref{rho '' rho' rho}, we have 
\begin{align*}
&\quad - \frac{ (\tau/2)^{N-\frac32 } }{(N-2)!} \sqrt{2\tau}  \int_\R  x\,e^{tx}\,\bigg[  H_{N-2}'\Big(\frac{x}{\sqrt{2\tau}} \Big) H_{N-1}\Big(\frac{x}{ \sqrt{2\tau} }\Big) -  H_{N-2}\Big(\frac{x}{\sqrt{2\tau}} \Big) H_{N-1}'\Big(\frac{x}{ \sqrt{2\tau} }\Big) \bigg] \frac{ e^{ -\frac{ x^2 }{ 1+\tau }  }  }{ \sqrt{2\pi} }  \, dx
\\
&=  2(1-\tau) \rho''(t)+ 3\tau(1+\tau) \,t\,\rho'(t) + 3\tau(1+\tau) \rho(t)  
\\
&\quad  -\frac{4}{1+\tau} v'''(t)  + 4(1-\tau) \,t\,v''(t)+  4 \Big( \tau(1+\tau) \,t^2+ (1+\tau) (N-1)+2-\tau \Big) v'(t)  + 8  \tau(1+\tau) \,t\, v(t)  . 
\end{align*}
Substituting this into the above equation, after simplifications, we obtain 
\begin{align}
\begin{split}
&\quad 4(1-\tau) v'''(t) -4(1+\tau)(\tau^2-3\tau+1) \,t\,v''(t) 
\\
&\quad - 4(1-\tau^2)\Big( 2\tau(1+\tau) \, t^2 +(N-1)(1+\tau)+2-\tau \Big) v'(t)  
\\
&\quad -4\tau(1+\tau)^2 \Big( \tau(1+\tau) \,t^2+(N-1)(1+\tau)+3-2\tau \Big) \,t\,v(t)
\\
&= 2(1-\tau)^2(1+\tau) \rho''(t) +5\tau(1-\tau)(1+\tau)^2  \,t\,\rho'(t)
 +\tau(1+\tau)^2\Big( 3\tau(1+\tau) \,t^2+2-4\tau \Big) \rho(t). 
\end{split}
\end{align}
Finally, the desired equation \eqref{v u ODE} follows from \eqref{u rho relation}.
\end{proof}

\subsection{Proof of Lemma~\ref{Lem_u and V}}  \label{Subsection_Prop uv in Lem}

Next, we prove Lemma~\ref{Lem_u and V}, which relies on straightforward computations using Proposition~\ref{Prop_u ODE}.

\begin{proof}[Proof of Lemma~\ref{Lem_u and V}]
By multiplying $t$ in the expression \eqref{V in terms of u} and taking  the derivative, we have 
\begin{align*}
V(t)+t\,V'(t) &= -(1-\tau) u'''(t) - \Big( \frac{\tau(1+\tau)(4-\tau)}{2} \,t -\frac{1-\tau}{t} \Big)  \, u''(t) 
\\
&\quad - \Big( \frac{3\tau^2  (1+\tau)^2}{4} \,t^2+ \frac{\tau(1+\tau)(4-\tau)}{2}  +\frac{1-\tau}{t^2} \Big)  \, u'(t) - \frac{3\tau^2  (1+\tau)^2}{2} \,t\, u'(t).
\end{align*}
Then by using Proposition~\ref{Prop_u ODE}, 
\begin{align*}
&\quad V(t)+t\,V'(t) = \Big( 2a_3(t) -(1-\tau) \Big) u'''(t) + \bigg[ 2a_2(t) - \Big( \frac{\tau(1+\tau)(4-\tau)}{2} \,t -\frac{1-\tau}{t} \Big) \bigg] \, u''(t) 
\\
&\quad +\bigg[2a_1(t)- \Big( \frac{3\tau^2  (1+\tau)^2}{4} \,t^2+ \frac{\tau(1+\tau)(4-\tau)}{2}  +\frac{1-\tau}{t^2} \Big)  \bigg]\, u'(t) + \bigg[2a_0(t)- \frac{3\tau^2  (1+\tau)^2}{2}\,t \bigg] u(t).
\end{align*}
This gives rise to 
\begin{equation}  \label{V' V}
V(t)+t\,V'(t)  = \mathsf{A}_2(t) u''(t) + \mathsf{A}_1(t) u'(t) + \mathsf{A}_0(t) u(t),
\end{equation}
where 
\begin{align*}
\mathsf{A}_2(t) &= -\frac{1+\tau}{2}\,t+\frac{3(1-\tau)}{t},
\\
\mathsf{A}_1(t) &=  -(1+\tau) \Big(  \frac{3\tau(1+\tau)}{4} \, t^2+ (1-\tau^2)(N-1) + \frac{2-6\tau+\tau^2}{2}     \Big) - \frac{3(1-\tau)}{t^2},
\\
\mathsf{A}_0(t) &= -\bigg[ \frac{\tau(1+\tau)}{4} \,t^2 + (1+\tau)  \,(N-1) + \frac{5+\tau}{4}     \bigg] \tau (1+\tau)^2\,t. 
\end{align*} 
Taking one more derivative and using Proposition~\ref{Prop_u ODE} once again, after simplifications as above, we obtain 
\begin{equation} \label{V' V''}
 2V'(t)+t\,V''(t)  = \mathsf{B}_2(t) u''(t) + \mathsf{B}_1(t) u'(t) + \mathsf{B}_0(t) u(t),
\end{equation} 
where 
\begin{align*}
\mathsf{B}_2(t) &=  -\frac{(1+\tau)^3(1-2\tau)}{4(1-\tau)}\,t^2 +(1+\tau)\Big( 1-3\tau+\tau^2-(N-1)(1-\tau^2) \Big)-\frac{12(1-\tau)}{t^2} , 
\\
\mathsf{B}_1(t) &= -  \frac{\tau (1+\tau)^3(3+2\tau) }{ 8  }\,t^3  - \frac{ (1+\tau)^2 }{ 2(1-\tau) } \Big( 1-4\tau+5\tau^2-5\tau^3  +(N-1) (1-\tau^2)(1+2\tau)  \Big)  \,t
\\
&\quad + (1+\tau)\Big(  2-15\tau+3\tau^2+3(N-1)(1-\tau^2) \Big) \frac{1}{t} + \frac{12(1-\tau)}{t^3} ,
\\
\mathsf{B}_0(t) &= \bigg[  -\frac{\tau (1+\tau)^2 }{ 8(1-\tau) }\,t^4- \frac{1+\tau}{8(1-\tau)} \Big( 5(1-\tau)+4(N-1)(1+\tau) \Big)t^2  + 2(N-1)(1+\tau)+\frac52-4\tau    \bigg] \tau (1+\tau)^2. 
\end{align*} 
Note that if $u$ is of the form \eqref{u abc V}, we have 
\begin{align}
\begin{split} \label{u V V' V'' v0}
t^2 u(t)& =\Big( \alpha(t) t^2-\beta(t) t+2\gamma(t) \Big) V(t)
+ \Big( \beta(t)\,t-2 \gamma(t) \Big)\Big( V(t)+t V'(t)\Big) + \gamma(t) t \Big( 2V'(t)+t V''(t) \Big). 
\end{split}
\end{align}
We now substitute \eqref{V in terms of u}, \eqref{V' V} and \eqref{V' V''} into \eqref{u V V' V'' v0}. 
Then by comparing the coefficients of the $u$, $u'$, and $u''$ terms, we derive a system of algebraic equations for $\alpha, \beta, $ and $\gamma$. Solving this system involves lengthy yet straightforward computations, ultimately yielding the explicit formulas of $\alpha$, $\beta$, and $\gamma$ in \eqref{u abc V}. This completes the proof.
\end{proof}


\appendix

\section{Integrable structure of the elliptic GinOE} \label{Appendix_integrable}

In this appendix, we give a brief exposition of the integrable structure of real eigenvalues of the elliptic GinOE established by Forrester and Nagao in \cite{FN08}, along with additional details on certain computations. 
In addition to this, we prove \eqref{M 2p w} and \eqref{M 2p w I0I1} in Remark~\ref{Rem_large N elliptic GinOE}.

Let us write 
\begin{equation} \label{Ck scaled Hermite}
C_k(x):= \Big( \frac{\tau}{2} \Big)^{k/2} H_k\Big( \frac{x}{ \sqrt{2\tau} } \Big)
\end{equation}
for the scaled monic Hermite polynomials.
Note that 
\begin{align} \label{Ck relations}
\frac{d}{dx} C_n(x)=n C_{n-1}(x), \qquad x\, C_n(x)= C_{n+1}(x)+n \tau \, C_{n-1}(x). 
\end{align}

It was shown in \cite[Theorem 1]{FN08} that
\begin{equation} \label{SOP Hermite}
p_{2k}(x):=C_{2k}(x), \qquad p_{2k+1}(x):=C_{2k+1}(x)-2k \, C_{2k-1}(x).
\end{equation}
form skew-orthogonal polynomials associated with real eigenvalues of the elliptic GinOE. 
This is indeed one of the very few examples of explicit construction of skew-orthogonal polynomials for asymmetric random matrices in the symmetry class of the GinOE. 
(See \cite{APS10} for another example in the context of the chiral GinOE.)
We also denote 
\begin{equation}
\begin{split}  \label{Phik Hermite}
\Phi_k(x) & :=\int_{\R} \, \textup{sgn}(x-y) \, p_k(y) e^{ -\frac{ y^2 }{ 2(1+\tau )} } \,dy
 = \int_\R p_k(y) e^{ -\frac{ y^2 }{ 2(1+\tau )} } \,dy -2 \int_x^{\infty}  p_k(y) e^{ -\frac{ y^2 }{ 2(1+\tau )} } \,dy.
\end{split}
\end{equation}
Then one can notice that
\begin{equation} \label{Phik derivative pk}
\frac{d}{dx} \Phi_k(x)= 2 \,p_k(x) e^{ -\frac{x^2}{ 2(1+\tau) } }. 
\end{equation}
From the general theory of the Pfaffian point process, it follows that all correlation functions of real eigenvalues of the elliptic GinOE can be expressed in terms of a Pfaffian of a skew-kernel constructed using \eqref{SOP Hermite} and \eqref{Phik Hermite}.
In particular, the density of real eigenvalues is given by 
\begin{equation} \label{RN Phi pk}
R_N(x)= \frac{ e^{-\frac{x^2}{2(1+\tau)} } }{  2\sqrt{2\pi} (1+\tau)  }  \sum_{k=0}^{N/2-1} \frac{1}{ (2k)! } \Big( \Phi_{2k}(x) p_{2k+1}(x)-\Phi_{2k+1}(x) p_{2k}(x) \Big), 
\end{equation}
see \cite[Eq. (6.6)]{FN08}.  
Each summation in this expression can be computed as follows, see \cite[Eqs. (6.7) and (6.11)]{FN08}. 
This leads to the expression of $R_N$ in \eqref{density eGinOE}.

\begin{prop}
For any even integer $N \ge 2$ and $\tau \in [0,1]$, we have
\begin{align}
\label{Hermite real cplx 1}
-\sum_{k=0}^{N/2-1} \frac{ \Phi_{2k+1}(x) p_{2k}(x)  }{(2k)!} & =2(1+\tau) e^{ -\frac{x^2}{ 2(1+\tau) }  }  \sum_{k=0}^{N/2-1} \frac{C_{2k}(x)^2}{(2k)!} ,
\\
 \label{Hermite real cplx 2}
\sum_{k=0}^{N/2-1} \frac{ \Phi_{2k}(x) p_{2k+1}(x) }{(2k)!} & =  2(1+\tau) e^{ -\frac{x^2}{2(1+\tau)} } \sum_{k=0}^{N/2-2} \frac{ C_{2k+1}(x)^2 }{(2k+1)!} + \frac{C_{N-1}(x) \Phi_{N-2}(x) }{ (N-2)! } .  
\end{align}
In particular, we have
\begin{equation} \label{Hermite real cplx}
R_N(x)= \frac{ e^{ -\frac{x^2}{1+\tau} } }{ \sqrt{2\pi} } \sum_{k=0}^{N-2} \frac{ C_k(x)^2 }{k!}  + \frac{ e^{ -\frac{x^2}{2(1+\tau)} }  }{ 2\sqrt{2\pi} (1+\tau) } \frac{ C_{N-1}(x)\Phi_{N-2}(x)  }{ (N-2)! } 
\end{equation}
and
\begin{equation} \label{RN integral}
\int_{\R} R_N(x)\,dx= \sqrt{ \frac{2}{\pi} } \sum_{k=0}^{N/2-1}  \frac{(\tau/2)^{2k} }{(2k)!}  \int_\R e^{ -\frac{ x^2 }{ 1+\tau }  }  H_{2k}\Big( \frac{x}{ \sqrt{2\tau} } \Big)^2 \,dx . 
\end{equation}
\end{prop}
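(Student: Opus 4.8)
The plan is to first derive closed-form, respectively recursive, expressions for the functions $\Phi_k$ introduced in \eqref{Phik Hermite}, and then to turn the two sums in \eqref{Hermite real cplx 1} and \eqref{Hermite real cplx 2} into telescoping sums. I would begin by checking, via \eqref{Phik derivative pk} together with the relations \eqref{Ck relations}, that
\[
\Phi_{2k+1}(x) = -2(1+\tau)\, e^{-\frac{x^2}{2(1+\tau)}}\, C_{2k}(x),\qquad k\ge 0.
\]
Indeed, differentiating the right-hand side and using $x\,C_{2k}(x) = C_{2k+1}(x) + 2k\tau\,C_{2k-1}(x)$ gives $e^{-\frac{x^2}{2(1+\tau)}}\big(2C_{2k+1}(x) - 4k\,C_{2k-1}(x)\big) = 2\,p_{2k+1}(x)\,e^{-\frac{x^2}{2(1+\tau)}}$, which equals $\Phi_{2k+1}'(x)$; since $p_{2k+1}$ is odd, $\Phi_{2k+1}(x)\to 0$ as $x\to-\infty$, which fixes the integration constant. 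The same comparison-of-derivatives argument yields the recursion
\[
\Phi_{2k}(x) = -2(1+\tau)\, e^{-\frac{x^2}{2(1+\tau)}}\, C_{2k-1}(x) + (2k-1)\,\Phi_{2k-2}(x),\qquad k\ge 1,
\]
where matching at $x\to-\infty$ forces the identity $\int_\R C_{2k}(y)\,e^{-\frac{y^2}{2(1+\tau)}}\,dy = (2k-1)\int_\R C_{2k-2}(y)\,e^{-\frac{y^2}{2(1+\tau)}}\,dy$ (a one-line Gaussian integration by parts using \eqref{Ck relations}), so that again the constant vanishes.

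Identity \eqref{Hermite real cplx 1} is then immediate from the first formula and $p_{2k} = C_{2k}$. For \eqref{Hermite real cplx 2} I would combine the two formulas of the previous step: solving the $\Phi_{2k}$-recursion and multiplying by $C_{2k+1}$ gives
\[
\frac{C_{2k+1}(x)\Phi_{2k}(x)}{(2k)!} = \frac{C_{2k+1}(x)\Phi_{2k+2}(x)}{(2k+1)!} + \frac{2(1+\tau)\, e^{-\frac{x^2}{2(1+\tau)}}\, C_{2k+1}(x)^2}{(2k+1)!}.
\]
Writing $p_{2k+1} = C_{2k+1} - 2k\,C_{2k-1}$ and setting $a_k := C_{2k-1}(x)\Phi_{2k}(x)/(2k-1)!$, this becomes
\[
\frac{\Phi_{2k}(x)\,p_{2k+1}(x)}{(2k)!} = a_{k+1} - a_k + \frac{2(1+\tau)\, e^{-\frac{x^2}{2(1+\tau)}}\, C_{2k+1}(x)^2}{(2k+1)!}
\]
for $k\ge 1$, while for $k=0$ one gets $a_1 + 2(1+\tau)\,e^{-\frac{x^2}{2(1+\tau)}}\,C_1(x)^2$ (here $p_1 = C_1$). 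Summing over $k = 0,\dots,N/2-1$ telescopes to $a_{N/2} = C_{N-1}(x)\Phi_N(x)/(N-1)!$ plus the squared terms, and one final use of the $\Phi_{2k}$-recursion rewrites $a_{N/2}$ in terms of $\Phi_{N-2}$; this produces exactly the boundary term $C_{N-1}(x)\Phi_{N-2}(x)/(N-2)!$ and removes the $k = N/2-1$ term from the squared sum, giving \eqref{Hermite real cplx 2}.

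It remains to deduce \eqref{Hermite real cplx} and \eqref{RN integral}. Substituting \eqref{Hermite real cplx 1} and \eqref{Hermite real cplx 2} into \eqref{RN Phi pk} and merging the even sum $\sum_{k=0}^{N/2-1} C_{2k}^2/(2k)!$ with the odd sum $\sum_{k=0}^{N/2-2} C_{2k+1}^2/(2k+1)!$ into $\sum_{j=0}^{N-2} C_j^2/j!$ — legitimate precisely because $N$ is even — yields \eqref{Hermite real cplx}. For \eqref{RN integral} I would integrate \eqref{RN Phi pk} over $\R$ and integrate by parts in each term using $\Phi_{2k}' = 2p_{2k}(\cdot)\,e^{-\frac{(\cdot)^2}{2(1+\tau)}}$ and $\Phi_{2k+1}' = 2p_{2k+1}(\cdot)\,e^{-\frac{(\cdot)^2}{2(1+\tau)}}$; since $\Phi_{2k+1}$ vanishes at $\pm\infty$, the boundary terms drop and $\int e^{-\frac{x^2}{2(1+\tau)}}\big(\Phi_{2k}p_{2k+1}-\Phi_{2k+1}p_{2k}\big)\,dx = -2\int e^{-\frac{x^2}{2(1+\tau)}}\Phi_{2k+1}p_{2k}\,dx$, at which point \eqref{Hermite real cplx 1} and $C_{2k}(x)^2 = (\tau/2)^{2k}H_{2k}(x/\sqrt{2\tau})^2$ finish the computation.

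The main obstacle is the index bookkeeping in the telescoping argument for \eqref{Hermite real cplx 2}: handling the exceptional $k=0$ term, tracking the shift by one in the squared sums, and extracting the boundary contribution $a_{N/2}$ correctly — together with verifying, in the first step, that the integration constant in the $\Phi_{2k}$-recursion really vanishes, which rests on the Gaussian moment identity above. The analytic points (interchanging the finite sums with the integral, and the decay of the integrands) are routine thanks to the Gaussian weight.
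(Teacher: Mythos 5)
Your proposal is correct and is essentially the paper's argument in lightly repackaged form: the closed formula for $\Phi_{2k+1}$ and the two-term recursion for $\Phi_{2k}$ that you verify are precisely the antiderivative versions of the identities \eqref{Hermite relation 1}--\eqref{Hermite relation 2} on which the paper's evaluation of the two sums rests, and your final integration by parts for \eqref{RN integral} coincides with the paper's use of $\tfrac{d}{dx}\big[\Phi_{2k}(x)\Phi_{2k+1}(x)\big]$. The only organizational differences — telescoping with $a_k=C_{2k-1}\Phi_{2k}/(2k-1)!$ instead of the paper's reindexing of the $\operatorname{sgn}$-integrals, and your auxiliary Gaussian-moment identity to fix the integration constant (which the paper sidesteps by evaluating $\int_{\R}\operatorname{sgn}(x-y)\,\partial_y[\,\cdot\,]\,dy$ directly) — are immaterial, and your index bookkeeping, including the cancellation of the $k=N/2-1$ squared term against the boundary contribution, checks out.
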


\begin{proof}
Using \eqref{Ck relations}, one can observe that
\begin{align}
p_{2k+1}(x) & =-(1+\tau) e^{ \frac{x^2}{2(1+\tau)}  } \frac{d}{dx} \Big[ e^{ -\frac{x^2}{2(1+\tau)} } C_{2k}(x) \Big],    \label{Hermite relation 1}
\\
 p_{2k+2}(x)-(2k+1) p_{2k}(x) & =-(1+\tau) e^{ \frac{x^2}{ 2(1+\tau) }  } \frac{d}{dx} \Big[ e^{ -\frac{x^2}{ 2(1+\tau) }  } C_{2k+1}(x)  \Big].  \label{Hermite relation 2}
\end{align} 
Then by \eqref{Hermite relation 1}, we have 
\begin{align*}
&\quad \Phi_{2k+1}(x) p_{2k}(x)  = \int_\R \sgn(x-y) p_{2k+1}(y) p_{2k}(x) e^{ -\frac{ y^2}{ 2(1+\tau) } } \,dy
\\
&= -(1+\tau)  C_{2k}(x) \bigg( \int_{-\infty}^x  \frac{d}{dy} \Big[ e^{ -\frac{y^2}{2(1+\tau)} } C_{2k}(y)  \Big] \,dy -\int_{x}^\infty    \frac{d}{dy} \Big[ e^{ -\frac{y^2}{2(1+\tau)} } C_{2k}(y)  \Big] \,dy  \bigg)
= -2(1+\tau) e^{ -\frac{x^2}{ 2(1+\tau) }  } C_{2k}(x)^2,
\end{align*}
which gives \eqref{Hermite real cplx 1}. 
On the other hand, since 
\begin{align*}
\Phi_{2k}(x) p_{2k+1}(x) & = \int_\R \sgn(x-y) p_{2k}(y) p_{2k+1}(x) e^{ -\frac{ y^2}{ 2(1+\tau) } } \,dy 
\\
&=\Big( C_{2k+1}(x)-2k C_{2k-1}(x) \Big) \int_\R \sgn(x-y) C_{2k}(y) e^{ -\frac{ y^2}{ 2(1+\tau) } } \,dy, 
\end{align*}
we have 
\begin{align*}
&\quad \sum_{k=0}^{N/2-1} \frac{ \Phi_{2k}(x) p_{2k+1}(x) }{(2k)!}   = \sum_{k=0}^{N/2-1} \frac{ C_{2k+1}(x)-2k C_{2k-1}(x) }{(2k)!}  \int_\R \sgn(x-y) C_{2k}(y) e^{ -\frac{ y^2}{ 2(1+\tau) } } \,dy
\\
 & = \sum_{k=0}^{N/2-1} \frac{ C_{2k+1}(x) }{(2k)!}  \int_\R \sgn(x-y) C_{2k}(y) e^{ -\frac{ y^2}{ 2(1+\tau) } } \,dy  -  \sum_{k=0}^{N/2-2} \frac{ C_{2k+1}(x) }{(2k+1)!}  \int_\R \sgn(x-y) C_{2k+2}(y) e^{ -\frac{ y^2}{ 2(1+\tau) } } \,dy.
\end{align*}
It now follows from \eqref{Hermite relation 2} that 
\begin{align*}
&\quad \sum_{k=0}^{N/2-1} \frac{ \Phi_{2k}(x) p_{2k+1}(x) }{(2k)!}  - \frac{C_{N-1}(x) \Phi_{N-2}(x) }{ (N-2)! } 
\\
&=-  \sum_{k=0}^{N/2-2} \frac{ C_{2k+1}(x) }{(2k+1)!} \int_\R \sgn(x-y) \Big(C_{2k+2}(y)-(2k+1) C_{2k}(y)\Big) e^{ -\frac{ y^2}{ 2(1+\tau) } } \,dy
\\
&=(1+\tau)   \sum_{k=0}^{N/2-2} \frac{  C_{2k+1}(x)  }{(2k+1)!} \bigg( \int_{-\infty}^x \frac{d}{dy} \Big[ e^{ -\frac{y^2}{2(1+\tau)} } C_{2k+1}(y)  \Big] \,dy -\int_{x}^\infty    \frac{d}{dy} \Big[ e^{ -\frac{y^2}{2(1+\tau)} } C_{2k+1}(y)  \Big] \,dy \bigg). 
\end{align*}
This gives rise to \eqref{Hermite real cplx 2}. 
Now the expression \eqref{Hermite real cplx} immediately follows from \eqref{RN Phi pk}, \eqref{Hermite real cplx 1} and \eqref{Hermite real cplx 2}. 
Note that by \eqref{Phik derivative pk}, we have
\begin{equation}
2 e^{-\frac{x^2}{2(1+\tau)} }  \Big( \Phi_{2k}(x) p_{2k+1}(x)+\Phi_{2k+1}(x) p_{2k}(x) \Big)= \frac{d}{dx} \Big[ \Phi_{2k}(x) \Phi_{2k+1}(x) \Big].
\end{equation}
Therefore, we have
\begin{equation}
\sum_{k=0}^{N/2-1} \int_\R  e^{-\frac{x^2}{2(1+\tau)} } \frac{ \Phi_{2k}(x) p_{2k+1}(x) }{(2k)!} \,dx = - \sum_{k=0}^{N/2-1} \int_\R  e^{-\frac{x^2}{2(1+\tau)} } \frac{ \Phi_{2k+1}(x) p_{2k}(x)  }{(2k)!}  \,dx ,
\end{equation}
which gives \eqref{RN integral}. 
\end{proof}

\begin{rem}
We also note that the expression \eqref{M0 expected number} follows from \eqref{RN integral} and 
\begin{align}
\begin{split}
&\quad \int_\R e^{-\frac{x^2}{1+\tau}} H_{2k}\Big( \frac{x}{\sqrt{2\tau}} \Big)^2 \,dx=\sqrt{2\tau} \int_\R e^{-\frac{2\tau }{1+\tau}u^2} H_{2k}( u )^2 \,du
\\
&=\sqrt{2\tau} \, 2^{ 2k-\frac{1}{2} } \Big( \frac{\tau}{1+\tau} \Big)^{ -2k-\frac{1}{2} } \Big(\frac{1-\tau}{1+\tau} \Big)^{ 2k } \Gamma\Big(2k+\frac{1}{2}\Big) {}_2F_1\Big(-2k,-2k;\frac{1}{2}-2k;-\frac{\tau}{1-\tau}\Big)
\\
&=\Big( \frac{1+\tau}{1-\tau} \Big)^{\frac12} \Big( \frac{\tau}{2} \Big)^{ -2k } \Gamma\Big( 2k+\frac{1}{2}\Big) {}_2F_1\Big(\frac{1}{2},\frac{1}{2};\frac{1}{2}-2k;-\frac{\tau}{1-\tau}\Big),
	\end{split}
\end{align}
where we have used \cite[(7.374-5)]{GR14} and the Euler's transformation \cite[Eq. (15.8.1)]{NIST}.
\end{rem}

Now we prove \eqref{M 2p w} and \eqref{M 2p w I0I1}. 
Before the proof, we mention that the function $M_0^{\rm w}$ also appears in a seemingly different context of the number variance of the GinUE, see \cite[Proposition 2.4]{ABES23}.
For the derivations, we make use of the power series expansion of the error function 
\begin{equation} \label{erf power}
	\erf(z)=\frac{2}{\sqrt{\pi}} \sum_{n=0}^{\infty}\frac{(-1)^n }{ n! \, (2n+1) } z^{2n+1},
\end{equation}
see \cite[Eq. (7.6.1)]{NIST}. 
By combining \eqref{erf power} with the Euler beta integral
\begin{equation}
\int_0^1 s^{2p}\,(1-s^2)^{n+\frac12} \,ds= \frac{ \Gamma(n+\frac32) \Gamma(p+\frac12) }{2\, \Gamma(n+p+2) }, 
\end{equation} 
we obtain 
\begin{align*}
\frac{1}{2\alpha\sqrt{\pi}} \int_{-2}^2 x^{2p}\,\erf\Big( \frac{\alpha}{2} \sqrt{4-x^2}\Big) \,dx &=  \frac{1}{\alpha \pi} \sum_{n=0}^{\infty}\frac{(-1)^n }{ n! \, (2n+1) }  \Big( \frac{\alpha}{2} \Big)^{2n+1}  \int_{-2}^2 x^{2p}\, (4-x^2)^{n+\frac12}\,dx
\\
&=  \frac{1}{ \pi} \sum_{n=0}^{\infty}\frac{(-1)^n  \alpha^{2n}  2^{2p+1}  }{ n! \, (2n+1) }      \frac{ \Gamma(n+\frac32) \Gamma(p+\frac12) }{ \Gamma(n+p+2) }.
\end{align*}
This gives rise to \eqref{M 2p w}.
On the other hand, \eqref{M 2p w I0I1} can be derived using the expansions
\begin{align} 
I_0\Big( \frac{\alpha^2}{2} \Big) e^{-\frac{\alpha^2}{2}}  = \sum_{k=0}^\infty \frac{(2k-1)!!}{(k!)^2} (-1)^k \Big(\frac{\alpha^2}{2}\Big)^k, \qquad 
I_1\Big( \frac{\alpha^2}{2} \Big)e^{-\frac{\alpha^2}{2}}  =\sum_{k=0}^\infty \frac{(2k-1)!!}{(k-1)!(k+1)!} (-1)^{k+1}\Big(\frac{\alpha^2}{2}\Big)^k, 
\end{align}
both of which follow from the definition \eqref{I nu} of the modified Bessel function.



\end{document}